\newtheorem{thm}{Theorem}
\newtheorem{prop}[thm]{Proposition}
\newtheorem{cor}[thm]{Corollary}
\newtheorem{lemm}[thm]{Lemma}
\theoremstyle{definition}
\newtheorem{assu}[thm]{Assumption}
\newtheorem{example}[thm]{Example}
\newtheorem{rem}[thm]{Remark}
\newtheorem{definition}[thm]{Definition}
\newtheorem*{notation}{Notation}
\numberwithin{equation}{section}
\def\Delta{\delta}
\def\N{{\mathbb N}} 
\def\Z{{\mathbb Z}}
\def\Q{{\mathbb Q}}
\def\R{{\mathbb R}}
\def\C{{\mathbb C}}
\def\s{{\bf s}}
\def\m{{\bf m}}
\def\x{{\bf x}}
\def\z{{\bf z}}
\def\u{{\bf u}}
\def\Nb{{\bf N}}
\def\P{{\bf P}}
\def\M{{\bf M}}
\newcommand{\zerob}{\boldsymbol{0}}
\newcommand{\alphab}{\boldsymbol{\alpha}}
\newcommand{\betab}{\boldsymbol{\beta}}
\newcommand{\mub}{\boldsymbol{\mu}}
\newcommand{\dc}{\mathrm{dC}}
\newcommand{\dd}{d}
\newcommand{\ii}{i}
\newcommand{\phie}{e}
\def\eps{{\varepsilon}}
\newcommand{\be}{\begin{enumerate}}
\newcommand{\ee}{\end{enumerate}}
\let\ds=\displaystyle
\begin{document}
\title{Values at non-positive integers of partially twisted multiple zeta-functions II}
\author{Driss Essouabri \and Kohji Matsumoto \and Simon Rutard}
\date{\today}
\maketitle
\begin{abstract}
We study the values at non-positive integer points of multi-variable
twisted multiple zeta-functions, whose each factor of the denominator is given by polynomials.    The fully twisted case was
already answered by de Crisenoy.    On the partially twisted case, 
in one of our former article we studied the case when each factor of
the denominator is given by linear forms or power-sum forms.
In the present paper we treat the case of general polynomial
denominators, and obtain explicit forms of the values at non-positive integer points.    Our strategy is to reduce to the theorem of de Crisenoy for the fully twisted case, via the multiple Mellin-Barnes integral formula.     We observe that in some cases the obtained values are transcendental.
\end{abstract}

\paragraph{\textbf{Acknowledgments.}}
The third author was supported by a JSPS Postdoctoral Fellowship (PE24744).

\tableofcontents

\section{Introduction}\label{intro}

Let $\N$, $\N_0$, $\Z$, $\Q$, $\R$, and $\C$ be the set of
all positive integers, all non-negative integers, 
all integers, all rational numbers, all real numbers, and all complex numbers, respectively. Throughout the article, we assume that the sum (respectively product) over the empty set is $0$ (respectively $1$).
It is the aim of the present paper to study the values at non-positive integer points of
the multiple series of the form
\begin{align}\label{1-1}
\zeta_n(\mathbf{s};\mathbf{P};\boldsymbol{\mu}_k)=\sum_{m_1,\ldots,m_n\geq 1}
\frac{\prod_{j=1}^{k}\mu_j^{m_j}}{\left(\prod_{j=1}^{T-1} P_j(m_1,\ldots,m_{n-1})^{s_j}\right)P_T(m_1,\ldots,m_n)^{s_T}},
\end{align}
where $\mathbf{s}=(s_1,\ldots,s_T)\in\mathbb{C}^T$, 
$\mathbf{P}=(P_1,\ldots,P_T)$ with $P_j(X_1,\ldots,X_{n-1})\in \mathbb{R}[X_1,\ldots,X_{n-1}]$
($1\leq j\leq T-1$), $P_T(X_1,\ldots,X_n)\in\mathbb{R}[X_1,\ldots,X_n]$,
and $\boldsymbol{\mu}_k=(\mu_1,\ldots,\mu_{k})\in(\mathbb{T}\setminus\{1\})^k$
($0\leq k\leq n$), 
with $\mathbb{T}=\{z\in\mathbb{C}\mid |z|=1\}$. 
This $\boldsymbol{\mu}_k$ is a kind of twisted factor of \eqref{1-1}.
The series \eqref{1-1} is absolutely convergent if $\Re s_t$ ($1\leq t\leq T$) are sufficiently large (see Proposition \ref{conv_domain} below), and can be continued meromorphically to the whole complex space $\C^T$ under certain assumptions.
Note that, if $n=k=T=1$ and $P_1(m_1)=m_1$, then \eqref{1-1} coincides with the
Lerch zeta-function 
\begin{align}\label{Lerch_def}
\zeta_{\mu_1}(s_1)=\sum_{m_1=1}^{\infty}\mu_1^{m_1}m_1^{-s_1}.
\end{align}
When $n=1$, we understand that the polynomials $P_1,\ldots,P_{T-1}$ correspond to real numbers $p_1,\ldots,p_T \in \mathbb{R} \setminus \{0\}$, and thus
$$\zeta_1(\mathbf{s};\mathbf{P};\boldsymbol{\mu}_1):=p_1^{-s_1} \cdots p_{T-1}^{-s_{T-1}} \sum_{m=1}^{+\infty} \frac{\mu_1^m}{P_T(m)^{s_T}}.$$

If $k=0$, then the numerator on the right-hand side of \eqref{1-1} is an empty product and is regarded as $1$,  so \eqref{1-1} is a non-twisted series.
In particular, if $T=n$ and $P_j(X_1,\ldots,X_j)=X_1+\cdots+X_j$ ($1\leq j\leq n$), then the corresponding
\eqref{1-1} is the classical multiple zeta-functions of Euler-Zagier type
$\zeta_{EZ,n}(\mathbf{s})$.
It is well known that $\zeta_{EZ,n}(\mathbf{s})$ can be continued meromorphically to the whole space, and their
values at non-positive integer points have been extensively studied.  The study of these values was
first cultivated by a pioneering work of Akiyama, Egami and Tanigawa \cite{AET01}.
They noticed that almost all such points are on singularity sets of $\zeta_{EZ,n}(\mathbf{s})$, 
so the ``values'' at those points should be understood as ``limit values''.
They obtained explicit formulas of those limit values in the case of some special cases of
limit processes.
Then, after several subsequent research
(such as Sasaki \cite{Sasa09}), Onozuka \cite{Onoz13}) arrived at an explicit
formula for rather general types of limit processes.
Laurent expansions around those points have further been studied (see Matsumoto, Onozuka
and Wakabayashi \cite{MOW20} and Saha \cite{Saha22}).

The case when $T=n$ and $P_j$ are more general linear forms has also been considered (see
Komori \cite{Komo10}, Essouabri and Matsumoto \cite{EM20}, 
Murahara and Onozuka \cite{MuOn}, Rutard \cite{Rutard}).
For example, in \cite{EM20}, the case
$P_j=\gamma_1X_1+\cdots+\gamma_jX_j+b_j$ ($1\leq j\leq n$, $\gamma_j$, $b_j$ are complex
constants) is studied.    An explicit formula for the limit value of the form
\begin{align}\label{1-3}
\lim_{t\to 0}\zeta_n(-\mathbf{N}+t\boldsymbol{\theta};\mathbf{P};\boldsymbol{\mu}_0)
\end{align}
(where $\mathbf{N}=(N_1,\ldots,N_n)\in \mathbb{N}_0^n$,
$\boldsymbol{\theta}=(\theta_1,\ldots,\theta_n)\in\mathbb{C}^n$)
is obtained, which implies that the value is included in the field generated by the parameters
$\gamma_j$, $b_j$, and $\theta_j$ over the rational number field $\mathbb{Q}$.

Essouabri and Matsumoto \cite{EM21} treated the case when $P_j$ ($1\leq j\leq n$)
are more general polynomials.
They considered the case when $P_j$ are power sums
$\gamma_1 X_1^{d_1}+\cdots+\gamma_j X_j^{d_j}$, or $P_j$ satisfy the $\mathrm{H_0S}$ condition
(see Definition \ref{def_H0S} below) with some additional conditions, under which
the series \eqref{1-1} can be continued meromorphically.
Therefore we may consider the limit values at non-positive integer points, and
in this case, sometimes the limit value is not included in the field generated by relevant
parameters.

Now consider the twisted case $k\geq 1$.   
When $k=n$, the series \eqref{1-1} is fully twisted, and the situation is now well-understood. It is shown by de Crisenoy \cite{dC} that in this case, when the polynomials
$P_j$ satisfy the HDF condition (see Definition \ref{def_HDF} below), 
the series \eqref{1-1} can be continued to an
entire function. He also proved an explicit expression of non-positive integer values, which will prove to be very useful in this paper.

If $1\leq k\leq n-1$, the situation may be called the ``partially twisted''
case.   Partially twisted double series are important in connection
with functional equations \cite{Mats04} \cite{KMT11} (see also
\cite{ChMa16} \cite{ChMa17}).
General $n$-fold partially twisted case was first studied by Komori \cite{Komo10} when all $P_j$ are
linear forms.   He obtained an explicit formula for certain limit values, expressed in terms of
generalized multiple Bernoulli numbers (whose notion he introduced).   His main tool is 
a multi-dimensional contour integration.

Another approach to the partially twisted case was developed by Essouabri and 
Matsumoto \cite{CMUSP}, 
in which the cases when $T=n$ and $P_j$ are linear forms or power sums are studied.
When $k=n-1$, by using the Mellin-Barnes integral formula it is possible to reduce to the
de Crisenoy case \cite{dC}; it can be found that non-positive integer points are regular points, and
an explicit formula for the values at those points are obtained.   Next, the case $k=n-2$ 
can be reduced to the case $k=n-1$, and an explicit formula for limit values (because
singular points appear for $k=n-2$) is shown.    In \cite{CMUSP} only these two cases are
discussed, but in the same way we can study the cases $k=n-3, n-4$ and so on inductively.

The aim of the present paper is to study the more general situation.   In fact,
since only the cases when $P_j$ are linear forms or power sums are studied in \cite{CMUSP},
in the present paper we aim to extend our consideration to the case when $T$ is not necessarily equal to $n$, and $P_j$ are more general polynomials.

Since the situation becomes more and more complicated, in the present paper we 
restrict ourselves to the case $k=n-1$.
Then we may rewrite \eqref{1-1} to
\begin{align}\label{1-4}
&\zeta_n(\mathbf{s};\mathbf{P};\boldsymbol{\mu}_{n-1})\notag\\
&\;=\sum_{m_1,\ldots,m_{n-1}\geq 1}
\left(\frac{\mu_1^{m_1}\cdots \mu_{n-1}^{m_{n-1}}}
{\prod_{j=1}^{T-1}P_j(m_1,\ldots,m_{n-1})^{s_j}}\sum_{m_n\geq 1}\frac{1}{P_T(m_1,\ldots,m_n)^{s_T}}\right).
\end{align}
The main point in the following sections is how to separate $m_n$ from the other variables 
$m_1,\ldots,m_{n-1}$ in the inner sum.

The present paper is organized as follows.    Our main strategy is to use the (multiple)
Mellin-Barnes formula to carry out the separation process mentioned above.
However, before going into the main body of the paper, first of all in Sections \ref{preparation} and \ref{Comp_dC}
we collect several necessary tools and prove a complement we need of de Crisenoy's result \cite{dC}. Then we will start our Mellin-Barnes argument from Section \ref{section:case_d_equal_1}.
Hereafter we write $P_T$ as follows:
\begin{align}\label{1-5}
P_T(X_1,\ldots,X_n) = Q_0(X_1,\ldots,X_{n-1}) X_n^{a_0} + \cdots + Q_d(X_1,\ldots,X_{n-1}) X_n^{a_d}
\end{align}
with $d \in \mathbb{N}_0$, $a_d \neq 0$, $0 \leq a_0 < \cdots < a_d$.
In Section \ref{section:case_d_equal_1}, we consider the case $d=1$ and $a_0=0$.
In Section \ref{section:case_d_equal_2}, we consider the case $d=2$ and $a_0=0$.   These are very special cases, but the treatment in these sections gives the
prototype of our method.    Then in Section \ref{section:general_case} we discuss the general
situation.
Our main general results are stated as Theorem
\ref{th:analytic_continuation_zeta_general_case}, Corollary
\ref{cor:set_of_singularities_zeta_n_general_case}, and Theorem
\ref{th:value_zeta_n} in Section \ref{section:general_case}.
These theorems give the information of meromorphic continuation
and location of singularities of $\zeta_n(\mathbf{s};\mathbf{P};\boldsymbol{\mu}_{n-1})$
and an explicit formula for special values at non-positive integer points.
The proofs of those results will be given in Section \ref{section:proofs}.
Finally we mention some specific examples in Section \ref{application}.

It is to be stressed that, when the denominator of \eqref{1-4}
includes a polynomial of degree $\geq 2$, sometimes its special
values at non-positive integer points are of some transcendental nature.    As mentioned above, this kind of phenomenon was already observed in \cite{EM21}.    The same type of phenomenon happens
in the situation treated in the present paper; see 
Example \ref{transcendental} in Section \ref{application}.

Though in the present paper we only consider the case $k=n-1$, it is quite plausible that we may study the case $k\leq n-2$ in a similar way, as in \cite{CMUSP}.

\section{Necessary tools}
\label{preparation}

We begin with the definitions of conditions $\mathrm{H_0S}$ (for \textit{sufficient minimal hypothesis}) and HDF (for \textit{weak decreasing hypothesis}, or ``\textit{hypothèse de décroissance faible}" in French).

\begin{definition}\label{def_H0S}
We call that a polynomial $P\in \mathbb{R}[X_1,\ldots,X_n]$ satisfies the $\mathrm{H_0S}$ condition 
if, for any $\mathbf{x}\in [1,+\infty)^n$, it satisfies that
\begin{enumerate}[label=(\roman*)]
\item $P(\mathbf{x})>0$, and
\item $(\partial^{\boldsymbol{\alpha}}P/P)(\mathbf{x})\ll 1$ (for any $\boldsymbol{\alpha}\in \mathbb{N}_0^n$).
\end{enumerate}
\end{definition}

\begin{definition}\label{def_HDF}
We call that a polynomial $P\in \mathbb{R}[X_1,\ldots,X_n]$ satisfies the HDF condition 
if, for any $\mathbf{x}=(x_1,\ldots,x_n)\in [1,+\infty)^n$, it satisfies that
\begin{enumerate}[label=(\roman*)]
\item $P(\mathbf{x})>0$,
\item for any $1\leq j\leq n$, there exists an $\varepsilon_0>0$ such that 
$(\partial^{\boldsymbol{\alpha}}P/P)(\mathbf{x})\ll x_j^{-\varepsilon_0}$
for all $\boldsymbol{\alpha}=(\alpha_1,\ldots,\alpha_n)\in \mathbb{N}_0^n$
such that $\alpha_j\geq 1$.
\end{enumerate}
\end{definition}

The condition $\mathrm{H_0S}$ was first introduced by Essouabri \cite{Esso97}, and then, the stronger
condition HDF appeared in the paper \cite{dC} of de Crisenoy.
The following rather general class of fully twisted multiple series was studied by de Crisenoy \cite{dC}. The result of de Crisenoy on this fully twisted series is:
\begin{prop}
[{de Crisenoy, \cite[Theorems A \& B]{dC}}]
\label{thm_dC}
Let $R_1,\ldots,R_T \in \mathbb{R} [X_1,\ldots,X_n]$ satisfying the HDF condition, and 
\begin{align}\label{HDFadd}
\prod_{\rho=1}^T R_{\rho}(\mathbf{x})\to +\infty \quad \text{as} \quad
|\mathbf{x}|=x_1+\cdots+x_n\to +\infty, \;\mathbf{x}\in [1,+\infty)^n.
\end{align}
Then, the multiple zeta-function
\begin{equation}
    \zeta_{n}^{\dc}(\mathbf{s};\mathbf{R};\boldsymbol{\mu}_n) := \sum_{m_1,\ldots,m_n \geq 1} \frac{\mu_1^{m_1} \cdots \mu_n^{m_n}}{R_1(\mathbf{m})^{s_1} \cdots R_T(\mathbf{m})^{s_T}} \label{eq:def_zeta_de_crisenoy}
\end{equation}
can be extended to an entire function on $\mathbb{C}^T$.   Moreover, for any 
$\mathbf{k}=(k_1,\ldots,k_T)\in \mathbb{N}_0^T$, the explicit formula
\begin{align}\label{dC_result}
\zeta_{n}^{\dc}(-\mathbf{k};\mathbf{R};\boldsymbol{\mu}_n)=\sum_{\boldsymbol{\alpha}\in S}
a_{\boldsymbol{\alpha}}\prod_{j=1}^n \zeta_{\mu_j}(-\alpha_j)
\end{align}
holds,
where $S$ and $a_{\boldsymbol{\alpha}}$ are defined by the expansion
$$\prod_{\rho=1}^T R_{\rho}(\mathbf{X})^{k_\rho}=\sum_{\boldsymbol{\alpha}\in S}a_{\boldsymbol{\alpha}}
\mathbf{X}^{\boldsymbol{\alpha}}
$$
with $\mathbf{X}^{\boldsymbol{\alpha}}=X_1^{\alpha_1}\cdots X_n^{\alpha_n}$, and 
$\zeta_{\mu}$ is the Lerch zeta-function defined by \eqref{Lerch_def}.
\end{prop}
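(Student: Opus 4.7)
The proof splits naturally into (i) the entire continuation of $\zeta_n^{\dc}$ to $\mathbb{C}^T$, and (ii) the explicit formula at $\mathbf{s}=-\mathbf{k}$. The arithmetic input exploited throughout is that for each $\mu\in\mathbb{T}\setminus\{1\}$, the partial sums $A_M(\mu) := \sum_{m=1}^M \mu^m = \mu(1-\mu^M)/(1-\mu)$ are uniformly bounded in $M$, so Abel summation in any variable $m_j$ costs only a bounded prefactor.

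For (i), my plan is to induct on $n$, performing one round of Abel summation in the outermost variable $m_n$ before invoking the induction hypothesis. After the Abel step, the inner sum becomes
\[
\sum_{m_n \geq 1} A_{m_n}(\mu_n)\Bigl(\prod_\rho R_\rho(\mathbf{m}',m_n)^{-s_\rho} - \prod_\rho R_\rho(\mathbf{m}',m_n+1)^{-s_\rho}\Bigr),
\]
writing $\mathbf{m}'=(m_1,\ldots,m_{n-1})$. A mean value estimate combined with the HDF decay shows that the finite difference is bounded by a constant multiple of $(1+\|\mathbf{s}\|)\, m_n^{-\varepsilon_0}\prod_\rho R_\rho(\mathbf{m})^{-\Re s_\rho}$, so each Abel step shifts the half-plane of absolute convergence to the left by $\varepsilon_0$ in every $s_\rho$ whose factor $R_\rho$ involves $X_n$ nontrivially. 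Iterating $N$ times and then invoking the induction hypothesis on the resulting $(n-1)$-variable sums extends $\zeta_n^{\dc}$ entirely; the base case $n=1$ reduces to the familiar entire extension of the Lerch zeta-function. The growth hypothesis \eqref{HDFadd} is used once at the outset to guarantee absolute convergence on some initial half-plane, which seeds the induction.

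For (ii), I would first show that after $n$ rounds of Abel summation, one in each variable $m_j$, the resulting iterated series converges for every $\mathbf{s}\in\mathbb{C}^T$ and coincides with the entire extension of $\zeta_n^{\dc}$. Specialising to $\mathbf{s}=-\mathbf{k}$, the kernel $\prod_\rho R_\rho(\mathbf{m})^{-s_\rho}$ becomes the polynomial $\prod_\rho R_\rho(\mathbf{m})^{k_\rho} = \sum_{\boldsymbol{\alpha}\in S}a_{\boldsymbol{\alpha}}\mathbf{m}^{\boldsymbol{\alpha}}$; since this $\boldsymbol{\alpha}$-sum is finite and Abel summation is linear, it can be exchanged with the iterated Abel-summed sums to give
\[
\zeta_n^{\dc}(-\mathbf{k};\mathbf{R};\boldsymbol{\mu}_n) = \sum_{\boldsymbol{\alpha}\in S} a_{\boldsymbol{\alpha}} \prod_{j=1}^n \bigl(\text{Abel-sum of } \textstyle\sum_{m_j \geq 1} \mu_j^{m_j} m_j^{\alpha_j}\bigr) = \sum_{\boldsymbol{\alpha}\in S} a_{\boldsymbol{\alpha}} \prod_{j=1}^n \zeta_{\mu_j}(-\alpha_j),
\]
the last equality being the one-variable version of the same principle, namely the standard evaluation of the Lerch zeta-function at a non-positive integer via its entire extension.

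The main obstacle is rigorously justifying that the iterated Abel-summed representation coincides pointwise with the analytic continuation at $\mathbf{s}=-\mathbf{k}$, and in particular verifying that the accumulated tails of $nN$ successive finite-difference operations vanish uniformly in a neighbourhood of $-\mathbf{k}$. Controlling these tails requires a delicate interplay between the HDF bounds (which only give polynomial-in-$\mathbf{s}$ decay per Abel step) and the polynomial growth of $\mathbf{m}^{\boldsymbol{\alpha}}$ produced at the special point $-\mathbf{k}$; this bookkeeping, rather than any single estimate, is the technical heart of de Crisenoy's argument.
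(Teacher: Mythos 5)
First, note that the paper does not actually prove this proposition: it is imported verbatim from de Crisenoy \cite{dC} (Theorems A and B), and only the supplementary moderate-growth statement, Proposition \ref{thm_dC_compl}, is proved here, in Section \ref{Comp_dC}. Your attempt therefore has to be measured against de Crisenoy's own argument, whose skeleton is visible in Section \ref{Comp_dC}: the series is first converted into integrals $Y^{n_1,n-n_1}$ over $[-1,+1]^{n_1}\times[3/2,\infty)^{n-n_1}$, with the twists $\mu_k^{m_k}$ replaced by functions $f_k\in\mathcal B(r)$ admitting bounded iterated antiderivatives, and one then integrates by parts repeatedly, gaining a factor $x_k^{-\eps_0}$ per step from HDF. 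Your Abel-summation scheme is the natural discrete analogue of this (bounded partial sums of $\mu^m$ playing the role of membership in $\mathcal B(r)$), and your part (ii) --- expanding $\prod_\rho R_\rho^{k_\rho}$ and factoring the regularized sum into one-variable Lerch values --- is exactly the mechanism behind \eqref{dC_result}.

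The gap is in the structure of part (i). After $N$ rounds of Abel summation in $m_n$ alone, the main term is still an $n$-fold sum whose summand is only bounded by $(1+\|\s\|)^N m_n^{-N\eps_0}\prod_\rho R_\rho(\m)^{-\Re s_\rho}$: the gained decay lives solely in the $m_n$-direction, while on a left half-space $\Re s_\rho>-a$ the factor $\prod_\rho R_\rho(\m)^{-\Re s_\rho}$ grows polynomially in all of $m_1,\ldots,m_{n-1}$ as well. Moreover the inner sum over $m_n$ is no longer of the form $\prod_\rho\widetilde R_\rho(\m')^{-s_\rho}$, so ``the induction hypothesis on the resulting $(n-1)$-variable sums'' does not apply to it; it applies only to the boundary terms at $m_n=1$, and even there one must check that $\prod_\rho R_\rho(\x',1)\to\infty$, which can fail (take $R=X_n$). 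What is actually needed is iterated summation by parts in all $n$ variables, with the iteration count $m$ chosen so that $\eps_0 m$ exceeds the degree budget $q+aTp$ (compare the choice of $m$ in the proof of Proposition \ref{Integralmoderategrowth}); and extracting the decay $\prod_k m_k^{-\eps_0 m}$ in every variable from the resulting mixed derivatives is not automatic, since HDF charges only one variable per derivative --- this is precisely what forces the combinatorial bookkeeping of Definition \ref{EuQ} (the equal-size partition $D_1,\ldots,D_n$ of the derivative factors). In short, the class of objects over which you induct must be enlarged to one closed under these operations, as de Crisenoy does; as written, your induction does not close, and this is a missing idea rather than deferred bookkeeping.
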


By convention in this paper, $n$ is allowed to be equal to $0$, and in that case the polynomials $R_1,\ldots,R_T$ correspond to real numbers $r_1,\ldots,r_T$, and $\zeta_{0}^{\dc}(\mathbf{s};\mathbf{R};\boldsymbol{\mu}_0) = r_1^{-s_1} \cdots r_T^{-s_T}.$

\begin{rem}
The Lerch zeta-function $\zeta_{\mu}$ has analytic continuation to $\C$ and its values at non positive integers can be expressed in term of Stirling numbers of the second kind (see, for example, \cite[Lemma 5.7]{dC}).
\end{rem}

In this paper we will use de Crisenoy's result. Moreover, we also need a complement to his result on the order of $\zeta_{n}^{\dc}(\mathbf{s};\mathbf{R};\boldsymbol{\mu})$.

\begin{definition}\label{moderategrowth}
A holomorphic function $F$ in a domain $U$ of $\C^T$ is said to be {\it of moderate growth} on $U$ if for any compact set $K$ of $U\cap \R^T$, there exists two constants $A=A(K,F), C=C(K,F) \in \R_{>0}$ such that 
$$F(s_1,\ldots, s_T) \leq C \, \left(1+\sum_{j=1}^T |\Im s_j|\right)^A$$
for all $(s_1,\ldots, s_T) \in U$ such that $\left(\Re s_1,\ldots, \Re s_T\right) \in K$.
\end{definition}

\begin{prop}\label{thm_dC_compl}
Assume that assumptions of Proposition \ref{thm_dC} hold. Then the entire function $\zeta_{n}^{\dc}(\mathbf{s};\mathbf{R};\boldsymbol{\mu}_n)$ is of moderate growth on $\C^T$.
\end{prop}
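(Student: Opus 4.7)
The plan is to revisit de Crisenoy's proof that $\zeta_n^{\dc}$ is entire and keep explicit track of the $\mathbf{s}$-dependence at each step. Fix a compact $K\subset\R^T$ and let $\Omega_K:=\{\mathbf{s}\in\C^T:(\Re s_1,\ldots,\Re s_T)\in K\}$; the goal is to bound $|\zeta_n^{\dc}(\mathbf{s};\mathbf{R};\boldsymbol{\mu}_n)|$ on $\Omega_K$ polynomially in $\sum_j|\Im s_j|$. The key tool is repeated Abel summation: since each $\mu_j\in\mathbb{T}\setminus\{1\}$, the partial sums $\sum_{m=1}^M\mu_j^m$ are uniformly bounded by $2|1-\mu_j|^{-1}$, so we may apply Abel summation $N$ times in each of the variables $m_1,\ldots,m_n$ to rewrite \eqref{eq:def_zeta_de_crisenoy} as a finite linear combination of multiple sums of the form
$$\sum_{\mathbf{m}\in\N^n}\gamma(\mathbf{m})\,D_1^{N}\cdots D_n^{N}\!\Big(\prod_{\rho=1}^T R_\rho(\mathbf{m})^{-s_\rho}\Big),$$
where $D_j$ denotes the forward difference in the $j$-th variable, $\gamma$ is uniformly bounded, and boundary terms of the same type but in fewer variables also appear.

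I would then expand each iterated difference via the discrete mean-value theorem together with Leibniz and Fa\`a di Bruno applied to the product $\prod_\rho R_\rho^{-s_\rho}$. This produces a sum of terms of the form
$$\Big(\prod_\rho(-s_\rho)(-s_\rho-1)\cdots(-s_\rho-\alpha_\rho+1)\Big)\prod_\rho R_\rho(\widetilde{\mathbf{m}})^{-s_\rho-\alpha_\rho}\prod_\rho(\partial^{\boldsymbol{\beta}_\rho}R_\rho)(\widetilde{\mathbf{m}}),$$
with $|\boldsymbol{\alpha}|\leq nN$, multi-indices $\boldsymbol{\beta}_\rho$ of sizes controlled by $N$, and $\widetilde{\mathbf{m}}$ at bounded distance from $\mathbf{m}$. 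The HDF bound of Definition \ref{def_HDF} gives each ratio $(\partial^{\boldsymbol{\beta}}R_\rho/R_\rho)(\widetilde{\mathbf{m}})$ an extra decay factor $\widetilde{m}_j^{-\varepsilon_0}$ whenever $\beta_j\geq 1$; accumulated over all differences, this produces total decay of order $m_j^{-\varepsilon_0 N}$ in each variable. Choosing $N$ large, depending only on $K$ and the degrees of the $R_\rho$, and invoking \eqref{HDFadd}, makes the multiple series absolutely convergent, uniformly on $\Omega_K$.

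The $\mathbf{s}$-dependence is then explicit: the falling factorials contribute at most $\prod_\rho(1+|s_\rho|)^{\alpha_\rho}$, and since $\Re s_\rho$ lies in the compact $K$ we have $(1+|s_\rho|)\ll_K(1+|\Im s_\rho|)$; the factors $|R_\rho(\widetilde{\mathbf{m}})^{-s_\rho-\alpha_\rho}|=R_\rho(\widetilde{\mathbf{m}})^{-\Re s_\rho-\alpha_\rho}$ are real and, after absorption into the summable tail, contribute only a $K$-dependent constant. Summing over the finitely many multi-indices $\boldsymbol{\alpha}$ with $|\boldsymbol{\alpha}|\leq nN$ yields the moderate-growth estimate with exponent $A=nNT$.

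The main obstacle is the combinatorial bookkeeping of the iterated Leibniz/Fa\`a di Bruno expansion: one must verify that every term produced by differentiating $\prod_\rho R_\rho^{-s_\rho}$ carries only factors polynomial in $\mathbf{s}$ of controlled total degree, while the HDF-controlled ratios simultaneously furnish enough decay to secure absolute convergence uniformly on $\Omega_K$. All essential estimates are already present in de Crisenoy's original proof of Proposition \ref{thm_dC}; the only added task is to keep the polynomial $\mathbf{s}$-factors explicit rather than absorbed into constants.
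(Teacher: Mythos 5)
Your route is genuinely different from the paper's. The paper does not touch the series directly: it reduces to $Z^*(\mathbf{s})=\sum_{m_j\geq 2}(\cdots)$, invokes de Crisenoy's representation of $Z^*$ as a combination of integrals $Y_\eps^{n_1,n_2,n_3}$ obtained by deforming each summation variable into the complex domain (so the weight functions lie in $\mathcal B(3/2,M_\eps)$ with $M_\eps=O(\eps^{-1})$ and the deformed polynomials acquire arguments of size $O(\eps)$), proves a uniform version of de Crisenoy's Theorem 2.7 (Proposition \ref{Integralmoderategrowth}) giving a bound $\ll M^n(1+\sum_t|s_t|)^A e^{\delta\sum_t|\Im s_t|}$ with explicit dependence on $M$ and $\delta=O(\eps)$, and finally optimizes $\eps\asymp(1+\sum_t|\Im s_t|)^{-1}$ to convert the exponential factor into a polynomial one. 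Your discrete Abel-summation scheme, which never leaves $[1,\infty)^n$, would be structurally cleaner for this particular statement: since $R_\rho>0$ there, $|R_\rho^{-s_\rho}|=R_\rho^{-\Re s_\rho}$ carries no $\Im s_\rho$-dependence at all, so no exponential factor ever appears and no $\eps$-optimization is needed; the only $\mathbf{s}$-growth comes from the falling factorials, exactly as you say. (One caveat on the set-up: the iterated partial sums of $\mu^m$ are \emph{not} uniformly bounded, so the iteration must be organized as $\mu^m=(\mu-1)^{-1}D(\mu^m)$ at every stage, keeping the weight $\mu^{m}$ alive; your description is consistent with this, but it is worth stating.)

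There is, however, a genuine gap at the decay-accumulation step, and it is precisely the point you defer to ``combinatorial bookkeeping.'' The claim that the HDF factors ``accumulated over all differences'' yield $m_j^{-\varepsilon_0 N}$ does not follow from Definition \ref{def_HDF}: in the Fa\`a di Bruno expansion of $\partial_j^N\prod_\rho R_\rho^{-s_\rho}$ there are terms in which all $N$ derivatives in the variable $x_j$ land on a \emph{single} factor, producing one ratio $\partial_j^N R_\rho/R_\rho$, and HDF only guarantees $\ll x_j^{-\varepsilon_0}$ for that ratio no matter how large $N$ is. (Your displayed model term, with exactly one $\partial^{\boldsymbol\beta_\rho}R_\rho$ per $\rho$, gives at most $x_j^{-\varepsilon_0 T}$, which is not enough.) The decay per variable is $x_j^{-\varepsilon_0 p}$ where $p$ is the \emph{number of distinct ratio factors} involving $x_j$, not the number of differences taken. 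The repair is available but must be said: because the $R_\rho$ are polynomials, $\partial_j^{b}R_\rho=0$ once $b>\deg_{X_j}R_\rho$, so every surviving Fa\`a di Bruno term has at least $N/\max_\rho\deg_{X_j}R_\rho$ ratio factors in the variable $x_j$, giving decay $x_j^{-\varepsilon_0 N/p}$ with $p$ the maximal partial degree --- which still suffices after enlarging $N$ by the factor $p$. This is exactly the role played in the paper (and in \cite{dC}) by the spaces $\mathcal E_{\mathbf u}(Q)$ and the equal-cardinality sets $D_1,\ldots,D_n$ of Definition \ref{EuQ}, whose condition 3-(c) forces $m$ \emph{distinct} factors with $x_k$-decay for each $k$; see \eqref{order_D_k} and \eqref{ggggg}. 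Without either that bookkeeping or the vanishing-of-high-derivatives argument, the absolute convergence of your Abel-summed series on $\Omega_K$ for $K$ containing negative coordinates is not established, and the proof does not close.
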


In the statement of his main theorem (Proposition \ref{thm_dC} above), de Crisenoy did not mention that the holomorphic continuation has moderate growth. Since we need this fact in our proofs in this paper, we will give in Section \ref{Comp_dC} a proof of Proposition \ref{thm_dC_compl} based in part on de Crisenoy's method.

Our tool of reducing the partially twisted case to the above theorem on the fully twisted
case is the following Mellin-Barnes type formulas.   First we state the simplest 
form (see \cite[Section 14.51]{WW}):

\begin{prop}
[The Mellin-Barnes formula]
\label{MB_formula}
Let $s,\lambda \in \C$ with $\Re s>0$, $|\arg (\lambda)|<\pi$ and $\lambda \neq 0$. Then, we have 
$$(1+\lambda)^{-s}=\frac{1}{2\pi \ii} \int_{(c)}\frac{\Gamma (s+z) \Gamma (-z)}{\Gamma (s)}\lambda^z \, \dd z,$$
where $c$ is a real number with $-\Re s < c < 0$ and the path $(c)$ of integration is the vertical line $\Re z=c$.
\end{prop}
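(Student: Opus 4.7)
The plan is to evaluate the contour integral by closing $(c)$ to the right, pick up the residues of $\Gamma(-z)$ at $z = 0,1,2,\ldots$, recognise the resulting series as the binomial expansion of $(1+\lambda)^{-s}$ on $|\lambda|<1$, and then extend to the full cut plane $|\arg\lambda|<\pi$, $\lambda\neq 0$ by analytic continuation in $\lambda$.

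First I would set up the analytic input. Stirling's asymptotic $|\Gamma(\sigma+\ii t)|\asymp |t|^{\sigma-1/2}e^{-\pi|t|/2}$ on fixed vertical strips, combined with $|\lambda^z|=|\lambda|^{c}e^{-t\arg\lambda}$, gives integrand decay $\ll e^{-(\pi-|\arg\lambda|)|t|}$ on $\Re z=c$; the hypothesis $|\arg\lambda|<\pi$ provides a positive margin and yields absolute convergence. The same Stirling bounds, uniform in $\lambda$ on compacta of $\C\setminus(-\infty,0]$, justify differentiation under the integral sign, so the left-hand side is holomorphic in $\lambda$ on the entire cut plane.

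Next I would fix $\lambda$ with $|\lambda|<1$ and close the contour using positively oriented rectangles $R_N=[c,N+1/2]\times[-T,T]$. The only poles inside $R_N$ are the simple poles $z=0,1,\ldots,N$ of $\Gamma(-z)$: the poles $z=-s-k$ of $\Gamma(s+z)$ sit to the left of $(c)$ since $c>-\Re s$. The residue of the full integrand at $z=n$ is $\frac{(-1)^{n+1}}{n!}\cdot\frac{\Gamma(s+n)}{\Gamma(s)}\lambda^n$. Stirling forces the top and bottom edges to vanish as $T\to\infty$; on the moving right edge $\Re z=N+1/2$ the geometric factor $|\lambda|^{N+1/2}$ shrinks while $\Gamma(-N-1/2-\ii t)$ decays superpolynomially (via the reflection formula $\Gamma(-z)\Gamma(1+z)=-\pi/\sin\pi z$), swamping the polynomial growth of $\Gamma(s+N+1/2+\ii t)$. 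Summing the residues (with the sign coming from the downward-oriented left edge in the residue theorem) gives
\begin{equation*}
\frac{1}{2\pi\ii}\int_{(c)}\frac{\Gamma(s+z)\Gamma(-z)}{\Gamma(s)}\lambda^{z}\,\dd z=\sum_{n=0}^{\infty}\frac{\Gamma(s+n)}{\Gamma(s)\,n!}(-\lambda)^{n},
\end{equation*}
and the right-hand side is the binomial series $(1+\lambda)^{-s}$ on $|\lambda|<1$.

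Finally, since both sides are holomorphic in $\lambda$ on the connected cut plane $\C\setminus(-\infty,0]$ and coincide on the nonempty open subset $\{|\lambda|<1\}\cap(\C\setminus(-\infty,0])$, the identity theorem propagates the equality to all $\lambda$ with $|\arg\lambda|<\pi$, $\lambda\neq 0$. The main technical obstacle is the uniform control on the moving right edge: one must bound $|\Gamma(s+N+1/2+\ii t)\,\Gamma(-N-1/2-\ii t)|$ uniformly in $t\in\R$ as $N\to\infty$, which is handled by applying Stirling to the shifted variable together with the reflection formula. This step is standard (cf.\ Whittaker--Watson, Section 14.51) but is the technical crux of the proof.
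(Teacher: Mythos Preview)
The paper does not actually prove this proposition: it is quoted as a standard tool with a reference to Whittaker--Watson \cite[Section~14.51]{WW}, and no argument is given in the paper itself. Your proposal is the classical proof found in that reference (close the contour to the right, collect the residues of $\Gamma(-z)$ to recover the binomial series on $|\lambda|<1$, then analytically continue in $\lambda$), and it is correct in outline and in substance.

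One small wording issue on the right-edge estimate: it is not accurate to say that $\Gamma(s+N+\tfrac12+\ii t)$ has only ``polynomial growth'' in $N$ --- by itself it grows factorially. What is true, and what your reflection-formula remark is really pointing at, is that the \emph{product}
\[
\Gamma(s+z)\,\Gamma(-z)\;=\;-\frac{\pi}{\sin(\pi z)}\cdot\frac{\Gamma(s+z)}{\Gamma(1+z)}
\]
has at most polynomial growth in $|z|$ along $\Re z=N+\tfrac12$ (since $\Gamma(s+z)/\Gamma(1+z)\sim z^{s-1}$ and $|\sin(\pi z)|\ge 1$ there), and this is what the geometric factor $|\lambda|^{N+1/2}$ then kills. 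Phrase that step in terms of the product and the argument is clean.
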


Next we quote the multiple version due to Mellin \cite{Mell}, and
was successfully used in Essouabri \cite{Esso12}.

\begin{prop}
[The multiple Mellin-Barnes formula]
\label{MMB_formula}
Let $s,\lambda_0,\ldots, \lambda_r \in \C$. Let $\rho_1, \ldots, \rho_r >0$. Assume that 
$\Re \lambda_j>0$ for any $j=0,\ldots, r$ and that $\Re s>\rho_1+\cdots+\rho_r$. Then, 
\begin{align*}
&\left(\lambda_0+\cdots +\lambda_r\right)^{-s}\\
&\quad=
\frac{1}{(2\pi \ii)^r} \int_{(\rho_1)} \cdots \int_{(\rho_r)}
\frac{\Gamma (s-z_1-\cdots-z_r) \prod_{j=1}^r\Gamma (z_j)}
{\Gamma (s)\lambda_0^{s-z_1-\cdots -z_r} \prod_{j=1}^r\lambda_j^{z_j}}
\, \dd z_1 \cdots \dd z_r.
\end{align*}
\end{prop}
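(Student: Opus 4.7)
The plan is to argue by induction on $r\ge 1$, with the single-variable Mellin--Barnes identity (Proposition \ref{MB_formula}) providing both the base case and the key step at each induction level.

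For the base case $r=1$, factor $(\lambda_0+\lambda_1)^{-s}=\lambda_0^{-s}(1+\lambda_1/\lambda_0)^{-s}$. The hypotheses $\Re\lambda_0,\Re\lambda_1>0$ imply $\lambda_1/\lambda_0\ne 0$ and $|\arg(\lambda_1/\lambda_0)|<\pi$, so Proposition \ref{MB_formula} applies. The change of variable $z_1=-z$ reverses the contour direction (cancelling the sign from $d z=-d z_1$), turns the condition $-\Re s<c<0$ into $0<\rho_1<\Re s$, and sends $\Gamma(s+z)\Gamma(-z)(\lambda_1/\lambda_0)^z$ to $\Gamma(s-z_1)\Gamma(z_1)(\lambda_1/\lambda_0)^{-z_1}$. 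Combining with the $\lambda_0^{-s}$ prefactor produces the $r=1$ version of the desired identity.

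For the inductive step, assume the formula holds for $r-1$ variables and set $A=\lambda_0+\cdots+\lambda_{r-1}$, which satisfies $\Re A>0$. Apply the $r=1$ case to the decomposition $\lambda_0+\cdots+\lambda_r=A+\lambda_r$ with any $\rho_r\in(0,\Re s)$, obtaining
\begin{align*}
(\lambda_0+\cdots+\lambda_r)^{-s}=\frac{1}{2\pi\ii}\int_{(\rho_r)}\frac{\Gamma(s-z_r)\Gamma(z_r)}{\Gamma(s)\,A^{s-z_r}\,\lambda_r^{z_r}}\,d z_r.
\end{align*}
For every $z_r$ on this contour, $\Re(s-z_r)=\Re s-\rho_r>\rho_1+\cdots+\rho_{r-1}$, so the induction hypothesis expresses $A^{-(s-z_r)}$ as an $(r-1)$-fold Mellin--Barnes integral whose integrand contains $\Gamma(s-z_r)$ in the denominator. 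Substituting this expression under the $z_r$-integral and exchanging the order of integration, the two factors $\Gamma(s-z_r)$ cancel and the remaining expression is exactly the claimed $r$-variable formula.

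The main technical point is the justification of Fubini when merging the nested integrals into a single integral on $(\rho_1)\times\cdots\times(\rho_r)$. Writing $z_j=\rho_j+\ii t_j$ and invoking Stirling's estimate $|\Gamma(\sigma+\ii t)|\asymp|t|^{\sigma-1/2}e^{-\pi|t|/2}$ on vertical lines, together with $|\lambda_j^{z_j}|=|\lambda_j|^{\rho_j}e^{-t_j\arg\lambda_j}$, each factor $\Gamma(z_j)\lambda_j^{-z_j}$ displays net exponential decay of rate $(\pi/2-|\arg\lambda_j|)|t_j|>0$, since $\Re\lambda_j>0$ forces $|\arg\lambda_j|<\pi/2$; similarly $\Gamma(s-z_1-\cdots-z_r)\lambda_0^{-(s-z_1-\cdots-z_r)}$ decays exponentially in $|\Im s-t_1-\cdots-t_r|$. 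The resulting integrand is therefore absolutely integrable over the product contour, which simultaneously legitimizes the Fubini exchange and the convergence of each iterated integral appearing in the argument.
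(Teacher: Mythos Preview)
Your induction argument is correct and is the standard way to derive the multiple Mellin--Barnes formula from the one-variable version. The paper itself does not supply a proof of this proposition: it is quoted as a classical result of Mellin \cite{Mell} (later used by Essouabri \cite{Esso12}), so there is no ``paper's own proof'' to compare against. Your treatment of the base case, the telescoping cancellation of $\Gamma(s-z_r)$ in the inductive step, and the Stirling-based justification of absolute convergence (exploiting $|\arg\lambda_j|<\pi/2$) are all in order.
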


We conclude this section with the following result on the domain of absolute convergence of the series \eqref{1-1}.
In the sequel of this paper we will assume that 
$P_1(\mathbf{m}),\ldots, P_{T-1}(\mathbf{m}), Q_0(\mathbf{m}),\ldots, Q_d(\mathbf{m}) >0$ for all $\mathbf{m} \in \N^{n-1}$ and 
\begin{equation}\label{eq:condition_polynomials_tends_infinity}
P_1 \cdots P_{T-1}
\left(Q_0 \cdots Q_d\right)^{\frac{1}{d+1}}(x_1,\ldots, x_{n-1}) \to \infty, 
\end{equation}
as $x_1+\cdots+x_{n-1} \to \infty$ $\left((x_1,\ldots, x_{n-1}) \in [1, \infty)^{n-1}\right)$.

By the AM–GM inequality, we get
$$(P_1\cdots P_T) (x_1,\ldots ,x_n) \gg_{\mathbf{P}} P_1\cdots P_{T-1} (Q_0\cdots Q_d)^{\frac{1}{d+1}} (x_1,\ldots,x_{n-1}) x_n^a, $$
where $a:=\frac{a_0+\cdots+a_d}{d+1}$.
Assumption \eqref{eq:condition_polynomials_tends_infinity} implies then by using \cite[Lemma 1]{Esso97} that there exist $r_0, b >0$ such that for all $\mathbf{x}\in [1,\infty)^n$ such that $\lVert \mathbf{x} \rVert \geq r_0$,
$$(P_1\cdots P_{T})(x_1,\ldots,x_n) \gg_{\mathbf{P}} (x_1\cdots x_{n-1})^b x_n^a. $$
Therefore we obtain:
\begin{prop}\label{conv_domain}
The multiple series \eqref{1-1} converges in the domain 
\begin{align}\label{1-2}
\Re s_1,\ldots,\Re s_{T-1}> 1/b \quad \text{ and } \quad \Re s_T> 1/a.
\end{align}
\end{prop}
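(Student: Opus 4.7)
The key inequality $(P_1\cdots P_T)(\mathbf{x})\gg(x_1\cdots x_{n-1})^b x_n^a$ on $\lVert\mathbf{x}\rVert\geq r_0$ has already been established in the preceding paragraph by chaining the AM--GM lower bound on $P_T$ with \cite[Lemma 1]{Esso97}. The plan is to turn this into an absolute convergence estimate.

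Since $|\mu_j|=1$, the modulus of a summand of \eqref{1-1} equals $\prod_{j<T}P_j(m_1,\ldots,m_{n-1})^{-\Re s_j}\cdot P_T(m_1,\ldots,m_n)^{-\Re s_T}$, and the contribution from the finite region $\lVert\mathbf{m}\rVert<r_0$ is trivially bounded. On the tail I would perform the iterated summation suggested by \eqref{1-4}, handling $m_n$ first: the AM--GM bound $P_T(\mathbf{m})\geq (d+1)(Q_0\cdots Q_d)^{1/(d+1)}m_n^a$ gives
$$\sum_{m_n\geq 1}\frac{1}{P_T(m_1,\ldots,m_n)^{\Re s_T}}\;\ll\;\frac{\zeta(a\Re s_T)}{(Q_0\cdots Q_d)(m_1,\ldots,m_{n-1})^{\Re s_T/(d+1)}},$$
which converges thanks to $\Re s_T>1/a$. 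The remaining outer sum is then bounded using \cite[Lemma 1]{Esso97}: choosing $\varepsilon>0$ with $\Re s_j>1/b+\varepsilon$ for every $j<T$ and using that each $P_j, Q_\ell \geq 1$ on the tail (after possibly enlarging $r_0$), one derives
$$\prod_{j<T}P_j(\mathbf{m})^{\Re s_j}\,(Q_0\cdots Q_d)(\mathbf{m})^{\Re s_T/(d+1)}\;\gg\;(m_1\cdots m_{n-1})^{1+b\varepsilon},$$
so the outer sum is dominated by $\sum_{(m_1,\ldots,m_{n-1})}(m_1\cdots m_{n-1})^{-(1+b\varepsilon)}$, which converges.

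The main technical point is the exponent--matching in the penultimate display: the Essouabri bound carries exponent $1/(d+1)$ on $(Q_0\cdots Q_d)$, while after summing over $m_n$ this factor appears with exponent $\Re s_T/(d+1)$, which in general differs from what the Essouabri bound provides. One reconciles the two by raising the Essouabri bound to a suitable power tied to $\min_{j<T}\Re s_j$ and absorbing the leftover exponent discrepancies on each of the $P_j$ and $Q_\ell$ into the positive slack created by the strict hypotheses $\Re s_j>1/b$ and $\Re s_T>1/a$, using that all these factors exceed $1$ on the tail. Once this bookkeeping is in place, absolute convergence follows by comparison with a product of Riemann-type series.
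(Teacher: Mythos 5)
Your decomposition --- sum over $m_n$ first via the AM--GM bound $P_T(\m)\geq (d+1)\,(Q_0\cdots Q_d)^{1/(d+1)}(\m')\,m_n^{a}$, then control the outer sum by \cite[Lemma 1]{Esso97} applied to $P_1\cdots P_{T-1}(Q_0\cdots Q_d)^{1/(d+1)}$ --- is exactly the pair of inequalities the paper records just before the proposition, so the skeleton of your argument matches the intended route, and your treatment of the $m_n$-sum is correct. The gap is at the step you yourself flag as ``the main technical point'': the absorption argument does not close. Raising the Essouabri bound to the power $1/b+\varepsilon$ and comparing with $\prod_{j<T}P_j^{\Re s_j}(Q_0\cdots Q_d)^{\Re s_T/(d+1)}$ leaves the residual factor
$$\prod_{j<T}P_j^{\Re s_j-1/b-\varepsilon}\cdot (Q_0\cdots Q_d)^{(\Re s_T-1/b-\varepsilon)/(d+1)},$$
which must be bounded below. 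The exponents on the $P_j$ are nonnegative by hypothesis, but the exponent on $Q_0\cdots Q_d$ is $(\Re s_T-1/b-\varepsilon)/(d+1)$, and the only assumption on $\Re s_T$ is $\Re s_T>1/a$, which does not force $\Re s_T\geq 1/b$. When $Q_0\cdots Q_d$ is unbounded and $1/a<\Re s_T<1/b$, this factor tends to $0$ and there is no slack elsewhere to compensate: the slack created by the strict inequalities sits on the $P_j$'s and on the $m_n$-sum, not on the $Q_\ell$'s.

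This is not a repairable piece of bookkeeping, because the conclusion itself fails in that regime. Take $n=T=2$, $P_1\equiv 1$ (as in the paper's Example \ref{transcendental}), and $P_2(X_1,X_2)=X_1^{10}+X_1^{10}X_2^{100}$, so that $Q_0=Q_1=X_1^{10}$, $a=50$ and $b=10$. In absolute value the series factors as $\bigl(\sum_{m_1\geq 1}m_1^{-10\Re s_2}\bigr)\bigl(\sum_{m_2\geq 1}(1+m_2^{100})^{-\Re s_2}\bigr)$, which diverges at $\Re s_2=1/20$ although $1/20>1/a$. What your method genuinely proves --- and what is actually used later in the paper, cf.\ \eqref{2-2}, where $\Re s_T>1/b+1/a$ --- is convergence for $\Re s_j>1/b$ $(j<T)$ and $\Re s_T>\max(1/a,1/b)$, under which the residual $Q$-exponent is nonnegative and your absorption goes through. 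A secondary point: ``each $P_j,Q_\ell\geq 1$ on the tail after enlarging $r_0$'' is not supplied by the hypotheses, which constrain only the product $P_1\cdots P_{T-1}(Q_0\cdots Q_d)^{1/(d+1)}$; individual factors may be constants below $1$, so you should argue with a uniform positive lower bound (harmless, but it should be stated that way).
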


\section{Proof of Proposition \ref{thm_dC_compl}}
\label{Comp_dC}

\subsection{Several definitions}
 
 \begin{definition}\label{classBr} 
 (\cite[Definition 2.3]{dC})
 For $r\in \R$, we define $\mathcal B (r)$ as the set of function $f: [r, \infty)\to \C$ such that there exists a sequence 
 $(f_n)_{n\geq 0}$ of $\mathcal C^\infty$ bounded function on $[r, \infty)$ verifying $f_0=f$ and $f_{n+1}'=f_n$ for any $n\in \N_0$.
 \end{definition}

\begin{example}
(\cite[Example 2.6]{dC})
If $\alpha, \beta \in \R$ and $a\in \C$ satisfy $\beta \neq 0$, $\alpha/\beta \not \in \Z$ and $|a|\neq 1$, then for any $r\in \R$, the function $f : [r, \infty)\to \C$ defined by $\ds f(x)=\frac{\phie^{\ii \alpha x}}{1-a \phie^{\ii \beta x}}$, belongs to $\mathcal B(r)$.    
\end{example}
We extend Definition \ref{classBr} as follows:
 \begin{definition}\label{classBrM+} 
 For $r\in \R$ and $M\in \R_+$, we define $\mathcal B (r;M)$ as the set of function $f: [r, \infty)\to \C$ such that there exists a sequence 
 $(f_n)_{n\geq 0}$ of $\mathcal C^\infty$ functions on $[r, \infty)$ satisfying $f_0=f$, $f_{n+1}'=f_n$ and 
 $\sup_{x\geq r}|f_n(x)|\leq M$ for any $n\in \N_0$.
 \end{definition}

\begin{definition}\label{entirecomb+}
Let $Y, Y_1,\ldots, Y_k$ functions defined in an open subset $U$ of $\C^T$, we say that $Y$ is an {\it entire combination} of $Y_1,\ldots, Y_k$ if there exists entire functions $\lambda, \lambda_1, \ldots, \lambda_k :\C^T\to \C$ such that $Y=\lambda + \sum_{i=1}^k \lambda_i Y_i$ on $U$. 
\end{definition}

In the proof of \cite[Theorem 2.7]{dC}, de Crisenoy introduced the following spaces:

\begin{definition}\label{EuQ}
For $\u=(u_t)_{t=1}^T \in \N_0^T$, the set $\mathcal E_{\u}(Q)$ is defined as the vector subspace of $\C[X_1,\ldots,X_n]$ generated by all the polynomials of the form $\ds \partial^{\betab} Q \prod_{t=1}^T \prod_{k\in F_t} \partial^{\varphi_t(k)}R_t$, where
\begin{enumerate}
\item $\betab \in \N_0^n$;
\item the $F_t$ are finite and pairwise disjoint subsets of $\N_0$, satisfying $|F_t|=u_t$;
\item for all $t=1,\ldots, T$, $\varphi_t$ is a function from $F_t$ to $\N_0^n$ such that there exists pairwise disjoint subsets $D_1,\ldots, D_n$ of $\N_0^n$,  satisfying:
\begin{enumerate}[label=(\alph*)]
\item $|D_1|=\cdots=|D_n|$;
\item $\sqcup_{k=1}^n D_k = \sqcup_{t=1}^T F_t$;
\item for any $t\in \{1,\ldots,T\}$, any $k\in \{1,\ldots,n\}$, and any $j\in D_k\cap F_t$, it holds that
$\varphi_t(k) \in \N_0^{k-1}\times \N\times \N_0^{n-k}$.
\end{enumerate}
\end{enumerate}
\end{definition}

\subsection{A uniform estimate of a certain integral}

By using in part de Crisenoy's proof, we obtain the following refinement of his Theorem 2.7 in \cite{dC}:
\begin{prop}\label{Integralmoderategrowth}
Let $\delta, M, C_1, C_2, C_3,\eps_0>0$ and $p,q \in \N$.
Let $Q, R_1,\ldots, R_T\in \C[X_1,\ldots, X_n]$ and 
$n_1\in \{0,\ldots, n\}$. We assume that:
\begin{enumerate}[label=(\roman*)]
\item $\max_{k=1,\ldots, n} \deg_{X_k} Q \leq q$;
\item $\max \{ \deg_{X_k }R_t \mid t=1,\ldots, T \text{ and }
k=1,\ldots, n\} \leq p$;
\item $R_t(\x) \not \in \R_{\leq 0}$
for any $t=1,\ldots, T$ and any $\x \in [-1,+1]^{n_1}\times [1,\infty)^{n-n_1}$;
\item $|R_t(\x)| \geq C_1$ for any $t=1,\ldots,T$ and
 any $\x \in [-1,+1]^{n_1}\times [1,\infty)^{n-n_1}$;
\item $\prod_{t=1}^T |R_t(\x) | \to \infty$ as $|\x|\to \infty$ $(\x \in [-1,+1]^{n_1}\times [1,\infty)^{n-n_1})$;
\item let $\alphab \in \{\zerob\}^{n_1}\times \N_0^{n-n_1}$ and $k\in \{n_1+1,\ldots,n\}$, and if $\alpha_k \geq 1$, then 
$$\left|\frac{\partial^{\alphab} R_t}{R_t} (\x)\right| \leq C_2 x_k^{-\eps_0}$$
for any $t=1, \ldots,T$ and any $\x \in [-1,+1]^{n_1}\times [1,\infty)^{n-n_1}$;
\item the coefficients of the polynomials $Q, R_1,\ldots, R_T$ are bounded by $C_3$;
\item $|\arg (R_t(\x))| \leq \delta$
for any $t=1,\ldots, T$ and any $\x \in [-1,+1]^{n_1}\times [1,\infty)^{n-n_1}$.  
\end{enumerate}
Let $f_{n_1},\ldots, f_n \in \mathcal B(1; M)$ and $f:[-1,+1]^{n_1}\to \C$ a continuous function such that $\sup_{\x\in [-1+1]^{n_1}}|f(\x)|\leq M$. Define:
\begin{align*}
Y(\s)=&Y^{n_1, n-n_1}(Q; R_1,\ldots, R_T;f, f_{n_1+1},\ldots,f_n; \s)\\
:=& \int_{ [-1,+1]^{n_1}\times [1,\infty)^{n-n_1}}Q(\x) \left(\prod_{t=1}^T R_t(\x)^{-s_t}\right) 
f(x_1,\ldots, x_{n_1})\\
& \qquad \times \left(\prod_{k=n_1+1}^n f_k (x_k)\right) \, \dd x_1\cdots \dd x_n.
\end{align*}
Then, 
\begin{enumerate}
\item $Y$ has an analytic continuation to $\C^T$ as an entire function;
\item for any $a>0$, there exists  $A=A(a,p,q, \eps_0)>0$ such that we have 
\begin{equation}\label{crucialestimate}
Y(\s) \ll_{a,n_1,n, T, p, q, \eps_0, C_1, C_2, C_3} M^n \left(1+\sum_{t=1}^T |s_t| \right)^A \phie^{\delta \left(\sum_{t=1}^T |\Im s_t|\right)}
\end{equation}
uniformly in $\s \in \C^T$ such that $\Re s_t >-a$ for all $t=1,\ldots,T$.
\end{enumerate}
\end{prop}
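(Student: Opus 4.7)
The plan is to refine de Crisenoy's proof of \cite[Theorem 2.7]{dC} by keeping explicit track of the $\s$-dependence of all constants, so as to extract the quantitative bound \eqref{crucialestimate}. The proof proceeds in three stages: an initial estimate on a right half-space, a meromorphic continuation by iterated integration by parts in the non-compact variables, and a secondary induction on $n - n_1$ that disposes of boundary contributions.

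Stage one: for $\Re s_t$ sufficiently large, $Y(\s)$ converges absolutely. Writing $|R_t(\x)^{-s_t}| = |R_t(\x)|^{-\Re s_t}\,\phie^{(\Im s_t)\arg R_t(\x)}$ and using assumptions (iii) and (viii) immediately produces the factor $\phie^{\delta\sum_t|\Im s_t|}$; assumption (v) combined with a lower bound of the type provided by \cite[Lemma 1]{Esso97} (already used in Proposition \ref{conv_domain}) then yields absolute integrability. The uniform bounds on $f$ and on the $f_k$ by $M$ contribute at most $M^n$ up to a constant, giving an initial estimate of the required form on this half-space.

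Stage two: for each non-compact variable $x_k$, $k\in\{n_1+1,\ldots,n\}$, Definition \ref{classBrM+} lets us write $f_k$ as the $N$-th derivative of some $f_{k,N}\in\mathcal B(1;M)$, and we integrate by parts $N$ times. The boundary term at $+\infty$ vanishes thanks to (v) once $N$ is large enough to beat the polynomial growth of $Q\prod_t R_t^{-s_t}$; the boundary term at $x_k = 1$ produces an integral of the same shape as $Y$ but with $n_1$ replaced by $n_1+1$, treated by downward induction on $n-n_1$ (the base case $n_1 = n$ reduces to the bound on the compact block). The bulk term redistributes $N$ derivatives onto $Q(\x)\prod_t R_t(\x)^{-s_t}$; by Leibniz and assumption (vi), each derivative falling on $R_t^{-s_t}$ generates a factor linear in $s_t$ together with an additional $x_k^{-\eps_0}$ decay, while derivatives on $Q$ preserve the polynomial character with degrees controlled by (i). This is precisely the closure property encoded by the spaces $\mathcal E_{\u}(Q)$ of Definition \ref{EuQ}. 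Choosing $N$ large enough in terms of $a$, $p$, $q$, $\eps_0$ makes the bulk integrand absolutely integrable uniformly for $\Re s_t > -a$. Letting $a\to\infty$ establishes the entire extension in (1), and the total polynomial-in-$\s$ factor accumulated is bounded by $(1+\sum_t|s_t|)^A$ for some $A = A(a,p,q,\eps_0)$, yielding (2).

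The main obstacle is the combinatorial bookkeeping: one must verify that after each round of integration by parts the integrand is a sum of terms each of which contributes at most one bounded factor per variable (giving $M^n$ uniformly), a polynomial-in-$\s$ factor of controlled degree, and no increase in the exponential dependence on $\Im s_t$ beyond the factor already produced in Stage one (this last point is automatic because integration by parts never increases the number of $R_t^{-s_t}$ factors). The closure of the classes $\mathcal E_{\u}(Q)$ under this differentiation scheme makes the procedure tractable; the quantitative refinement over \cite[Theorem 2.7]{dC} amounts to verifying that the implicit constants at each step depend only on the parameters listed in the statement.
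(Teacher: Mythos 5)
Your proposal is correct and follows essentially the same route as the paper: both are quantitative refinements of de Crisenoy's proof of \cite[Theorem 2.7]{dC}, resting on the same key points --- iterated integration by parts driven by the fact that antiderivatives of the $f_k$ remain in $\mathcal B(1;M)$ (preserving the $M^n$ factor), the gain of $x_k^{-\eps_0}$ per derivative from assumption (vi) encoded in the classes $\mathcal E_{\u}(Q)$, a number of integrations by parts chosen in terms of $a,p,q,\eps_0$ to force absolute integrability on $\Re s_t>-a$, induction on the number of variables for the boundary terms, and the observation that the shifts $\s\mapsto\s+\u$ leave the $\phie^{\delta\sum_t|\Im s_t|}$ factor untouched. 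The only difference is presentational: the paper invokes de Crisenoy's ``entire combination'' decomposition (his Step 9) as a black box and fixes the explicit choice $m=\lfloor(q+aTp+2)/\eps_0\rfloor+1$, whereas you re-describe the underlying integration by parts directly.
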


\begin{rem}
The main point in the estimate (\ref{crucialestimate}) is that the implicit constant depends at most on $a,n_1,n, T, p, q, \eps_0, C_1, C_2$ and $C_3$. So, Theorem \ref{Integralmoderategrowth} can be seen as a uniform version of de Crisenoy's Theorem 2.7 in \cite{dC}.
\end{rem}

\begin{proof}[Proof of Proposition \ref{Integralmoderategrowth}]
The first point follows from de Crisenoy's work (\cite[Theorem 2.7]{dC}).

To prove the second point we assume for simplicity that $n_1=0$ and $f\equiv 1$. The proof for $n_1 \geq 1$ is similar.

Let $Q, R_1,\ldots, R_T\in \C[X_1,\ldots, X_n]$ and $f_1,\ldots, f_n \in \mathcal B(1, M)$ as in the statement of the theorem.  It is clear that there exists $\sigma_0>0$ such that
\begin{align*}
&Y^{0, n}(Q; R_1,\ldots, R_T; f_1,\ldots,f_n; \s)\\
&\qquad = \int_{ [1,\infty)^n}Q(\x) \left(\prod_{t=1}^T R_t(\x)^{-s_t}\right) 
\left(\prod_{k=1}^n f_k (x_k)\right) \, \dd \x
\end{align*}
converges if $\Re s_t>\sigma_0$ for all $t=1,\ldots, T$.

The proof of the proposition proceeds by induction on $n$.
Further we need another induction argument:
de Crisenoy proved by induction on $m\in \N$ (see Step 9 in p.~1385 of \cite{dC}) that 
$Y^{0, n}(Q; R_1,\ldots, R_T; f_1,\ldots,f_n; \s)$ is an entire combination of functions of the form 
$$Y^{0, n}(G; R_1,\ldots, R_T; g_1,\ldots,g_n; \s+\u),$$ where $\u \in \N_0^T$, $|\u|=m n$, $G\in \mathcal E_{\u}(Q)$, $g_1,\ldots, g_n \in \mathcal B (1)$. 

Since at each stage of his recurrence, de Crisenoy proceeds only by integration by parts, it is clear that his proof implies that the coefficients of this combination are product of polynomial of $\s$ of degree $O(m)$ (which contributes to the 
$(1+\sum_t |s_t|)^A$ factor in \eqref{crucialestimate})
and function of the form 
$$Y^{0, n'}(H; R_1',\ldots, R_T'; h_1,\ldots,h_n; \s+\u)$$ 
where $n'\leq n-1$, $H\in \C[X_1,\ldots,X_{n'}]$ with coefficients of size $O(C_3)$, and each $R_t'$ is obtained from $R_t$ by substituting the value $1$ for certain variables $X_i$, and $h_1,\ldots,h_{n'} \in \mathcal B(1)$.
Since each of the functions $g_i$ and $h_i$ is an element of a sequence associated to some $f_k$ by Definition \ref{classBr}, we deduce from our definition of $\mathcal B(1,M)$ that, as $f_k$, the functions $g_i$ and $h_i$ are also elements of the set 
$\mathcal B(1,M)$.
Thus, by induction hypothesis, to conclude it is enough to prove that for $m\in \N$ sufficiently big and depending only on $a, p, q , \eps_0$, for $\u \in \N_0^T$ with $|\u|=m n$, 
for $G\in \mathcal E_{\u}(Q)$ and $g_1,\ldots, g_n \in \mathcal B (1, M)$, the function 
$$ \s\mapsto Y^{0, n}(G; R_1,\ldots, R_T; g_1,\ldots,g_n; \s+\u) $$ 
is holomorphic in the domain $\{\s \in \C^T\mid \Re s_t>-a~ (t=1,\ldots, T)\}$ and to verify the estimate (\ref{crucialestimate}) for this $Y^{0,n}$.

Since $G\in \mathcal E_{\u}(Q)$, it follows from the definition of $\mathcal E_{\u}(Q)$ that it is enough to study the case where
$G= \partial^{\betab} Q \prod_{t=1}^T \prod_{k\in F_t} \partial^{\varphi_t(k)}R_t$ as in Definition \ref{EuQ}. 
Since $|\u|= m n$, we have 
\begin{align}\label{order_D_k}
|D_k|=m \quad(k=1,\ldots,n),
\end{align}
because, noting 3-(a), 3-(b) of Definition \ref{EuQ},
$$
|\u|=u_1+\cdots+u_T=|F_1|+\cdots+|F_T|
=|D_1|+\cdots+|D_n|=n|D_k|
$$
for any $k$.

It then follows that we have uniformly in $\x\in  [1,\infty)^n$,
\begin{align*}
|G(\x)|&= |\partial^{\betab} Q(\x)| \prod_{t=1}^T \prod_{j\in F_t} |\partial^{\varphi_t(j)}R_t(\x)|\\
&= |\partial^{\betab} Q (\x)| \prod_{t=1}^T\prod_{k=1}^n \prod_{j\in F_t\cap D_k} |\partial^{\varphi_t(j)}R_t(\x)|\\
&\ll_{C_2, T, n} |\partial^{\betab} Q (\x)| \prod_{t=1}^T\prod_{k=1}^n \prod_{j\in F_t\cap D_k}|x_k^{-\eps_0} R_t(\x)|,
\end{align*}
where the last inequality follows by noting 
3-(c) of Definition \ref{EuQ} and (vi) of Proposition
\ref{Integralmoderategrowth}.
The above is further estimated as
\begin{align}\label{ggggg}
&\ll_{C_2, T, n} |\partial^{\betab} Q (\x)| \prod_{t=1}^T\prod_{k=1}^n \left(x_k^{-\eps_0} |R_t(\x)|\right)^{|F_t\cap D_k|}\nonumber\\
&\ll_{C_2, T, n} |\partial^{\betab} Q (\x)| \left(\prod_{k=1}^n x_k^{-\eps_0 |D_k|}\right) \left(\prod_{t=1}^T |R_t(\x)|^{|F_t|}\right)\nonumber\\
&\ll_{C_2, T, n} |\partial^{\betab} Q (\x)| \left(\prod_{k=1}^n x_k^{-\eps_0 m}\right) \left(\prod_{t=1}^T |R_t(\x)|^{u_t}\right)
\end{align}
by \eqref{order_D_k}.

Let $a >0$. Choose now $\ds m:=\left\lfloor \frac{q+aTp +2}{\eps_0}\right\rfloor +1\in \N$. 
Then we have 
$\displaystyle{\frac{q+aTp+2}{\eps_0}<{m}}$,
so $-\eps_0 m +q+aT p< -2$.   Set
\begin{align*}
U(\x):=&G(\x) \prod_{t=1}^T R_t^{-s_t-u_t} (\x) \prod_{k=1}^nf_k(x_k),\\
V:=&\{a,n_1,n, T, p, q, \eps_0, C_1, C_2, C_3\}.
\end{align*}
It follows from \eqref{ggggg} that we have uniformly in  $\x\in  [1,\infty)^n$, in $\s \in \C^T$ such that 
$\Re s_t>-a$ for all $t=1,\ldots, T$, and in $f_1,\ldots, f_n \in \mathcal B (1, M)$:
\begin{align*}
U(\x)&\ll_V M^n |\partial^{\betab} Q (\x)| \left(\prod_{k=1}^n x_k^{-\eps_0 m}\right) \left(\prod_{t=1}^T |R_t(\x)|^{-\Re s_t}\right)\\
& \qquad \times \exp \left(\sum_{t=1}^T |\Im s_t| \, |\arg (R_t(\x))|\right)\\
&\ll_V M^n |\partial^{\betab} Q (\x)| \left(\prod_{k=1}^n x_k\right)^{-\eps_0 m} \left(\prod_{t=1}^T |R_t(\x)|\right)^a\\
& \qquad \times \exp \left(\delta \sum_{t=1}^T |\Im s_t| \right)\\
&\ll_V M^n \left(\prod_{k=1}^n x_k\right)^{-\eps_0 m +q+a T p} \exp \left(\delta \sum_{t=1}^T |\Im s_t| \right)\\
&\ll_V  M^n \left(\prod_{k=1}^n x_k\right)^{-2} 
\exp \left(\delta \sum_{t=1}^T |\Im s_t| \right),
\end{align*}
where we use (viii) of Proposition
\ref{Integralmoderategrowth} for the second inequality, 
(i) and (ii) of Proposition \ref{Integralmoderategrowth} for the third inequality.
It follows then that 
\begin{align}\label{ggggg2}
Y^{0,n}(G; R_1, \ldots, R_T; g_1,\ldots,g_n; \s+\u)&=\int_{[1,\infty)^n} U(\x) \, \dd \x\notag\\
&\ll_V M^n \exp \left(\delta \sum_{t=1}^T |\Im s_t| \right),
\end{align}
which implies the second point of Proposition \ref{Integralmoderategrowth}. This ends the proof of Proposition \ref{Integralmoderategrowth}.
\end{proof}

\subsection{Completion of the proof of Proposition \ref{thm_dC_compl}}

Now we complete the proof of Proposition 
\ref{thm_dC_compl}.
Let $R_1,\ldots,R_T \in \mathbb{R} [X_1,\ldots,X_n]$ satisfying assumptions of Proposition \ref{thm_dC}, and $\boldsymbol{\mu}_n=(\mu_1,\ldots,\mu_{n})\in (\mathbb{T} \setminus \{ 1 \})^{n}$.

By induction it is enough to prove Proposition \ref{thm_dC_compl} for the function
$$Z^*(\s):=\sum_{m_1,\ldots,m_n \geq 2} \frac{\mu_1^{m_1}\cdots \mu_n^{m_n}}{R_1(\mathbf{m})^{s_1} \cdots R_T(\mathbf{m})^{s_T}}
$$
(see \cite[pp.~1390--1391]{dC}).
Proposition \ref{conv_domain} implies that there exists $\sigma_0>0$ such that $Z^*(\s)$
converges for $\mathbf{s}=(s_1,\ldots,s_T)$ such that $\Re s_t>\sigma_0$ for all $t=1,\ldots,T$.

For $k=1,\ldots, n$, fix $\theta_k \in \R\setminus 2\pi \Z$ such that $\mu_k =\phie^{\ii \theta_k}$.
For $P\in \C[X_1,\ldots, X_n]$ and $n_1,n_2,n_3\in \N_0$ such that $n_1+n_2+n_3=n$ and $\eps >0$, we define $P_\eps^{n_1, n_2, n_3} \in \C[X_1,\ldots, X_n]$ by
\begin{align*}
& P_\eps^{n_1, n_2, n_3}(x_1,\ldots,x_n)\\
& :=P\left(\frac{3}{2}+i\eps x_1,\ldots,\frac{3}{2}+i\eps x_{n_1},x_{n_1+1}+i\eps,\ldots,x_{n_1+n_2}+i\eps,\right.\\
&\biggl.\qquad\qquad x_{n_1+n_2+1}-i\eps,\ldots,x_{n}-i\eps\biggr)\\
&= P\left(\left(\frac{3}{2},\ldots,\frac{3}{2}, ,x_{n_1+1},\ldots, x_{n}\right)+i\eps\left(x_1,\ldots,x_{n_1},1,\ldots,1,-1,\ldots,-1\right)\right).
\end{align*}
By using the method of \cite{Esso97}, de Crisenoy proved in \cite[pp.~1390--1391]{dC} that there exists $\eps_1>0$ such that for $\eps \in (0,\eps_1)$, $Z^*(\s)$ is a linear combination of integrals of the form
\begin{align*}
Y_\eps^{n_1,n_2,n_3}(\s)
&:= \int_{ [-1,+1]^{n_1}\times [3/2,\infty)^{n-n_1}}\left(\prod_{t=1}^T 
(R_t)_\eps^{n_1,n_2,n_3}(\x)^{-s_t}\right) \\
& \qquad  \times \left(\prod_{k=1}^n f_k (x_k)\right) \, \dd x_1\cdots \dd x_n,
\end{align*}
where 

\begin{itemize}
\item[$\bullet$] for $k=1,\ldots, n_1$, $f_k:[-1,+1]\to \C$ is defined by 
$$f_k(x):=-\phie^{\frac{3}{2} \ii \theta_k}
\frac{\phie^{-\eps\theta_k x}}{1+\phie^{-2\pi \eps x}},$$
and so $f:[-1,+1]^{n_1}\to \C$ defined by $f(x_1,\ldots, x_{n_1})=\prod_{k=1}^{n_1} f_k(x_k)$, is a continuous bounded function and satisfies $\sup_{[-1,+1]^{n_1}}|f|\ll 1$ uniformly in $\eps \in (0,\eps_1)$.
\item[$\bullet$] for $n_1+1\leq k \leq n_1+n_2$, $f_k:[3/2,\infty)\to \C$ is defined by 
$$f_k(x):=-\phie^{-\eps \theta_k}
\frac{\phie^{\ii\theta_k x}}{1-\phie^{-2\pi \eps}\phie^{2\pi \ii x}}.$$
\item[$\bullet$] for $n_1+n_2+1\leq k \leq n$, $f_k:[3/2,\infty)\to \C$ is defined by 
$$f_k(x):=-\phie^{\eps \theta_k}
\frac{\phie^{\ii\theta_k x}}{1-\phie^{2\pi \eps}\phie^{2\pi \ii x}}.$$
Since $\theta_k \not \in 2\pi \Z$, it follows that 
$f_k \in \mathcal B(3/2, M_\eps)$ where $M_\eps =O(\eps^{-1})$ uniformly in $\eps \in (0,\eps_1)$.
\end{itemize}

Using in addition Lemma 2 of \cite{Esso97}, we  easily verify the hypotheses of the Theorem \ref{Integralmoderategrowth} above with $M=M_\eps =O(\eps^{-1})$ and $\delta = O(\eps)$ and deduce from it $Z^*(\s)$ is an entire function and satisfies for any $a>0$, there exists $A=A(a, \mathbf{R})$ such that 
$$Z^*(\s) \ll_{\mathbf{R}} \eps^{-n} \left(1+\sum_{t=1}^T |s_t| \right)^A \phie^{O(\eps) \left(\sum_{t=1}^T |\Im (s_t)|\right)}$$
uniformly in $\s \in \C^T$ with $\Re s_t>-a$ for all $t=1,\ldots, T$ and $\eps \in (0,\eps_1)$.
By choosing $\ds \eps =\frac{\eps_1}{2+\left(\sum_{t=1}^T |\Im s_t|\right)}$, we conclude that 
$$Z^*(\s) \ll_{\mathbf{R}} \left(1+\sum_{t=1}^T |s_t| \right)^{A+n} $$
uniformly in $\s \in \C^T$ with $\Re s_t>-a$ for all $t=1,\ldots, T$.
This implies that $Z^*(\s)$ has moderate growth and ends the proof of Proposition \ref{thm_dC_compl}. \qed

\section{The case \texorpdfstring{$d=1$}{d equal 1} and \texorpdfstring{$a_0=0$}{a0 equal 0}}
\label{section:case_d_equal_1}

Now we are going into the main body of the present paper, that is, the study of $\zeta_n(\mathbf{s};\mathbf{P};\boldsymbol{\mu}_{n-1})$.    Before stating our general results, in this and the next sections, we will discuss some simple special cases.
In this section we consider the case when $P_T(X_1,\ldots,X_n)$ is of the form
\begin{align}\label{P_t_form1}
P_T(X_1,\ldots,X_n)=Q_0(X_1,\ldots,X_{n-1})+Q_1(X_1,\ldots,X_{n-1})X_n^{a_1},
\end{align}
where
$P_1,\ldots, P_{T-1}, Q_0,Q_1\in \mathbb{R}[X_1,\ldots,X_{n-1}]$ satisfy HDF and \eqref{eq:condition_polynomials_tends_infinity}, and $a_1$ is a positive integer. 
Then
\begin{align*}
&P_T(m_1,\ldots,m_n)^{-s_T}=
(Q_0(m_1,\ldots,m_{n-1})+Q_1(m_1,\ldots,m_{n-1})m_n^{a_1})^{-s_T}\\
&\quad =Q_0(m_1,\ldots,m_{n-1})^{-s_T}\left(1+
\frac{Q_1(m_1,\ldots,m_{n-1})m_n^{a_1}}{Q_0(m_1,\ldots,m_{n-1})}\right)^{-s_T}.
\end{align*}
Applying the Mellin-Barnes formula (Proposition \ref{MB_formula}), we see that the above is equal to
\begin{equation}\label{2-1}
Q_0(m_1,\ldots,m_{n-1})^{-s_T}\cdot \frac{1}{2\pi \ii}\int_{(c)}
\frac{\Gamma(s_T+z)\Gamma(-z)}{\Gamma(s_T)} \left(\frac{Q_1(m_1,\ldots,m_{n-1})m_n^{a_1}}{Q_0(m_1,\ldots,m_{n-1})}
\right)^z \, \dd z
\end{equation}
where the path of integration is the vertical line from $c-i\infty$ to
$c+i\infty$, and $c$ satisfies $-\Re s_T<c<0$.
But because of \eqref{1-2}, we may assume that
\begin{align}\label{2-2}
&\Re s_j >\frac{1}{b} \quad \text{ for any } j=1,\ldots, T-1,\notag\\
&\Re s_T >\frac{1}{b}-c \quad \text{ and } \quad c< \min\left(\frac{-1}{a}, \frac{-1}{a_1}\right).
\end{align}
Inserting \eqref{2-1} into \eqref{1-4}, we obtain
\begin{align}\label{2-3}
&\zeta_n(\mathbf{s};\mathbf{P};\boldsymbol{\mu}_{n-1})\notag\\
&\;=\sum_{m_1,\ldots,m_{n-1}\geq 1}
\left(\frac{\prod_{j=1}^{n-1}\mu_j^{m_j}}{\prod_{j=1}^{T-1}P_j(m_1,\ldots,m_{n-1})^{s_j}}\right)\notag\\
&\qquad \times\sum_{m_n\geq 1}
Q_0(m_1,\ldots,m_{n-1})^{-s_T} \notag\\
&\qquad \qquad \times \frac{1}{2\pi \ii}\int_{(c)}
\frac{\Gamma(s_T+z)\Gamma(-z)}{\Gamma(s_T)} \left(\frac{Q_1(m_1,\ldots,m_{n-1})m_n^{a_1}}{Q_0(m_1,\ldots,m_{n-1})}
\right)^z \, \dd z\notag\\
&\;=\frac{1}{2\pi \ii}\int_{(c)}
\frac{\Gamma(s_T+z)\Gamma(-z)}{\Gamma(s_T)}
\zeta_{n-1}^{\dc}(\mathbf{s}(z);\mathbf{P(Q)};\boldsymbol{\mu}_{n-1})\sum_{m_n\geq 1}m_n^{a_1 z} \, \dd z,
\end{align}
where
$\mathbf{s}(z)=(s_1,\ldots,s_{T-1},s_T+z,-z)$ and
$\mathbf{P(Q)}=(P_1,\ldots,P_{T-1},Q_0,Q_1)$.
Here, the sum with respect to $m_n$ is separated as 
$$
\sum_{m_n\geq 1}m_n^{a_1 z}=\zeta(-a_1 z).
$$
Note that, since $\Re z=c$, by \eqref{1-2} and \eqref{2-2} we see that
$\Re s_j>1$ for any $j=1,\ldots, T-1$, $\Re(-z)=-c>\frac{1}{a}$ and
$\Re(s_T+z)=\Re s_T+c>\frac{1}{b}$, 
hence $\zeta_{n-1}^{\dc}(\mathbf{s}(z);\mathbf{P(Q)};\boldsymbol{\mu}_{n-1})$ is an absolutely convergent series.
In view of Proposition \ref{thm_dC}, we see that
this series can be continued to an entire function.


Now we shift the path of integration from $\Re z=c$ to $\Re z=M-\eta$, where $M$ is
a large positive integer and $0<\eta<1$.    We encounter the following poles of the integrand:
\begin{enumerate}[label=(\roman*)]
\item $z=-1/a_1$ from the zeta factor,
\item $z=0,1,2,\ldots,M-1$ from the factor $\Gamma(-z)$.
\end{enumerate}
The poles of $\Gamma(s_T+z)$ are irrelevant.
Since
$$
\lim_{z\to -1/a_1}(z+1/a_1)\zeta(-a_1z)=\lim_{\delta\to 0}\delta\zeta(-a_1(-1/a_1+\delta))=-1/a_1,
$$
the residue at $z=-1/a_1$ is
$$
\frac{\Gamma (s_T-1/a_1)\Gamma(1/a_1)}{\Gamma(s_T)}\zeta_{n-1}^{\dc}(\mathbf{s}(-1/a_1),\mathbf{P(Q)},
\boldsymbol{\mu}_{n-1})\cdot (-1/a_1).
$$
The residue at $z=\ell$ ($0\leq\ell\leq M-1$) is
\begin{align*}
&\frac{\Gamma(s_T+\ell)}{\Gamma(s_T)}\frac{(-1)^{l+1}}{l!}
\zeta_{n-1}^{\dc}(\mathbf{s}(\ell);\mathbf{P(Q)};\boldsymbol{\mu}_{n-1})\zeta(-a_1\ell)\\
&\qquad=-\binom{-s_T}{\ell}\zeta_{n-1}^{\dc}(\mathbf{s}(\ell);\mathbf{P(Q)};\boldsymbol{\mu}_{n-1})\zeta(-a_1\ell).
\end{align*}
Therefore, after the shifting of the path, we have
\begin{align}\label{2-5}
&\zeta_n(\mathbf{s};\mathbf{P};\boldsymbol{\mu}_{n-1})
=\frac{\Gamma(1/a_1)}{a_1}\frac{\Gamma (s_T-1/a_1)}{\Gamma(s_T)}\zeta_{n-1}^{\dc}(\mathbf{s}(-1/a_1),\mathbf{P(Q)},
\boldsymbol{\mu}_{n-1})\notag\\
&\quad +\sum_{\ell=0}^{M-1}\binom{-s_T}{\ell}\zeta_{n-1}^{\dc}(\mathbf{s}(\ell);\mathbf{P(Q)};\boldsymbol{\mu}_{n-1})\zeta(-a_1\ell)
\notag\\
&\quad+\frac{1}{2\pi \ii}\int_{(M-\eta)}
\frac{\Gamma(s_T+z)\Gamma(-z)}{\Gamma(s_T)}
\zeta_{n-1}^{\dc}(\mathbf{s}(z);\mathbf{P(Q)};\boldsymbol{\mu}_{n-1})\zeta(-a_1z) \, \dd z.
\end{align}

The singularities appear only from the first part of 
the right-hand side.
If $a_1=1$, then
$$
\frac{\Gamma(s_T-1/a_1)}{\Gamma(s_T)}=\frac{\Gamma(s_T-1)}{\Gamma(s_T)}=\frac{1}{s_T-1},
$$
and we find that $s_T=1$ is the only singularity.    (This situation is similar to
the linear case treated in \cite{CMUSP}.)
When $a_1\geq 2$, the factor $\Gamma(s_T-1/a_1)$ produces infinitely many singular
hyperplanes.

In any case, when $s_T=-N_T$ is a non-positive integer, it is not on any singularity set.
Therefore, using \eqref{2-5}, we obtain an explicit expression of the values of
$\zeta_n(\mathbf{s};\mathbf{P};\boldsymbol{\mu}_{n-1})$ at non-positive integer points 
$\mathbf{s}=-\mathbf{N}=-(N_1,\ldots,N_T)$.
Because of the factor $\Gamma(s_T)$ on the denominator, for $s_T=-N_T$ the integral term always vanishes, and also
the first term vanishes except for the case $a_1=1$.
We obtain:

\begin{thm}\label{thm_d=1}
When $P_T$ is of the form \eqref{P_t_form1}, we have
\begin{align}\label{formula_d=1}
\zeta_n(-\mathbf{N};\mathbf{P};\boldsymbol{\mu}_{n-1})
=&\frac{-\delta_{1,a_1}}{N_T+1}\zeta_{n-1}^{\dc}(-\mathbf{N}(-1/a_1);
\mathbf{P(Q)};\boldsymbol{\mu}_{n-1})\notag\\
&+\sum_{\ell=0}^{N_T}\binom{N_T}{\ell}
\zeta_{n-1}^{\dc}(-\mathbf{N}(\ell);
\mathbf{P(Q)};\boldsymbol{\mu}_{n-1})\zeta(-a_1\ell),
\end{align}
where $\mathbf{N}(z)=(N_1,\ldots,N_{T-1},N_T+z,-z)$ and
$\delta_{1,a_1}$ is Kronecker's delta.
When $a_1\geq 2$, the explicit form of $\zeta_{n-1}$-factor 
on the right-hand side is given by \eqref{dC_result}.
\end{thm}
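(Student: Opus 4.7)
The plan is essentially to make rigorous the formal computation laid out in the passage preceding the theorem statement, and then specialize to $\mathbf{s} = -\mathbf{N}$. I would start from the representation \eqref{2-3}, which separates the variable $m_n$ from $m_1,\ldots,m_{n-1}$ via the Mellin--Barnes formula (Proposition \ref{MB_formula}): writing $P_T = Q_0(1 + Q_1 m_n^{a_1}/Q_0)$ and applying Proposition \ref{MB_formula} legitimately requires that the argument of $Q_1 m_n^{a_1}/Q_0$ lies in $(-\pi,\pi)$, which holds since $Q_0, Q_1$ are assumed positive on $\mathbb{N}^{n-1}$. The inner sum over $m_n$ collapses to $\zeta(-a_1 z)$, while the outer sum, once recognized as $\zeta_{n-1}^{\mathrm{dC}}(\mathbf{s}(z);\mathbf{P(Q)};\boldsymbol{\mu}_{n-1})$, is handled by Proposition \ref{thm_dC}: since $P_1,\ldots,P_{T-1},Q_0,Q_1$ satisfy HDF together with \eqref{eq:condition_polynomials_tends_infinity}, this factor extends to an entire function of $\mathbf{s}(z)$.

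Next I would shift the contour from $\Re z = c$ to $\Re z = M-\eta$ for a large integer $M$ and small $0<\eta<1$. This is the main technical step: I need to control the integrand on horizontal segments as $|\Im z| \to \infty$ to justify moving the path, and to ensure that the integral on the shifted line remains convergent. This is precisely where Proposition \ref{thm_dC_compl} is needed, since it provides the moderate growth of $\zeta_{n-1}^{\mathrm{dC}}(\mathbf{s}(z);\mathbf{P(Q)};\boldsymbol{\mu}_{n-1})$ in $\Im z$; combined with the classical Stirling bound on $\Gamma(s_T+z)\Gamma(-z)/\Gamma(s_T)$ and polynomial growth of $\zeta(-a_1z)$ in vertical strips, the exponential decay of $|\Gamma(-z)|$ along horizontals dominates and the shift is valid. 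During the shift, the residues to collect are: the simple pole $z=-1/a_1$ of $\zeta(-a_1z)$ (which contributes $-1/a_1$ times the gamma quotient times the dC-value), and the simple poles $z=\ell$, $\ell=0,1,\ldots,M-1$, of $\Gamma(-z)$ (giving the binomial coefficient $-\binom{-s_T}{\ell}$ after simplification). The poles of $\Gamma(s_T+z)$ do not contribute because, as noted, they lie to the left of the initial contour.

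Finally I would specialize to $\mathbf{s}=-\mathbf{N}$. The integral remainder vanishes identically because it carries the factor $\Gamma(s_T)^{-1}=\Gamma(-N_T)^{-1}=0$. For the residue at $z=-1/a_1$, the quotient $\Gamma(s_T-1/a_1)/\Gamma(s_T)$ vanishes at $s_T=-N_T$ unless the pole of $\Gamma(s_T-1/a_1)$ at $s_T=1/a_1-m$ coincides with $s_T=-N_T$ for some nonnegative integer $m$, which forces $a_1=1$; in that case a residue calculation gives $\Gamma(s_T-1)/\Gamma(s_T) = 1/(s_T-1)$, evaluated at $s_T=-N_T$ as $-1/(N_T+1)$, producing the first term with the Kronecker $\delta_{1,a_1}$. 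Choosing $M > N_T$, the sum over $\ell$ truncates at $\ell=N_T$ since $\binom{-s_T}{\ell}=\binom{N_T}{\ell}$ vanishes for $\ell>N_T$, which yields the finite sum on the right-hand side of \eqref{formula_d=1}. The last sentence of the theorem is then immediate from Proposition \ref{thm_dC}, equation \eqref{dC_result}.

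The main obstacle I anticipate is the contour-shift justification: keeping track of the combined growth of the gamma factors, the Riemann zeta, and $\zeta_{n-1}^{\mathrm{dC}}$ along the vertical line and on the horizontal closing segments. Once Proposition \ref{thm_dC_compl} is in hand, however, this is a standard convexity/Stirling estimate, and the residue bookkeeping is straightforward.
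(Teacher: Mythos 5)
Your proposal is correct and follows essentially the same route as the paper: the Mellin--Barnes separation \eqref{2-3}, the rightward contour shift collecting the residues at $z=-1/a_1$ and $z=\ell$ to reach \eqref{2-5}, and the specialization $\mathbf{s}=-\mathbf{N}$ where the $1/\Gamma(s_T)$ factor kills the integral remainder and (unless $a_1=1$) the first term. Your explicit appeal to Proposition \ref{thm_dC_compl} to justify the shift is exactly the ingredient the paper relies on (stated explicitly only in the $d=2$ section), so nothing is missing.
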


\section{The case \texorpdfstring{$d=2$}{d equal 2} and \texorpdfstring{$a_0=0$}{a0 equal 0}}
\label{section:case_d_equal_2}

In this section we consider the case when $P_T(X_1,\ldots,X_n)$ is of the form
\begin{align}\label{P_t_form2}
P_T(X_1,\ldots,X_n)=&Q_0(X_1,\ldots,X_{n-1})+Q_1(X_1,\ldots,X_{n-1})X_n^{a_1}\notag\\
&+Q_2(X_1,\ldots,X_{n-1})X_n^{a_2},
\end{align}
where
$P_1,\ldots, P_{T-1}, Q_0,Q_1, Q_2\in \mathbb{R}[X_1,\ldots,X_{n-1}]$ satisfy HDF and \eqref{eq:condition_polynomials_tends_infinity}, and $a_1, a_2$  positive integers such that $0<a_1<a_2$.

\begin{notation}
In this section we will use the notation $ \z'=(z_1,\ldots, z_{q-1})$ if $\z=(z_1,\ldots,z_q)$ ($q\in \N$). Similarly we will use the notation $\mathbf{s}'$, $\mathbf{m}'$, etc.
\end{notation}
We recall from (\ref{1-2}) that 
$\zeta_n(\mathbf{s};\mathbf{P};\mub_{n-1})$ converges absolutely if 
$$\Re s_1,\ldots,\Re s_{T-1}> 1/b \quad \text{ and } \quad \Re s_T> 1/a.$$
Set $\mathbf{P(Q)}=(P_1,\ldots,P_{T-1},Q_0,Q_1, Q_2)$. We know that there exists $\sigma_0>0$ such that 
$\zeta_{n-1}^{\dc}(s_1, \ldots, s_{T+2}; \mathbf{P(Q)};\boldsymbol{\mu}_{n-1})$ converges absolutely if $\Re (s_j)>\sigma_0$ for any $j=1,\ldots, T+2.$

Let $\rho_1, \rho_2 >\sigma_0$ such that $a_1\rho_1+a_2\rho_2 >1$.
Fix $\s=(\s', s_T) \in \C^{T-1}\times \C$ such that $\Re s_k >\max (\sigma_0,1/b)$ for any $k=1,\ldots, T-1$ and 
$\Re s_T >\max(1/a,\sigma_0+\rho_1+\rho_2) $. 
The multiple Mellin-Barnes formula (Proposition \ref{MMB_formula}) and the series-integral inversion imply that
\begin{align}\label{trucoublie}
&\zeta_n(\s;\P;\mub_{n-1})\notag\\
&=\sum_{m_1, \ldots, m_n\geq 1}
\frac{\mu_1^{m_1} \cdots \mu_{n-1}^{m_{n-1}}}{\left(\prod_{k=1}^{T-1} P_k(\m')^{s_k}\right)\left(Q_0(\m')+Q_1(\m')m_n^{a_1}+Q_2(\m')m_n^{a_2}\right)^{s_T}}\notag\\
&= \frac{1}{(2\pi \ii)^2} \int_{(\rho_1)}\int_{(\rho_2)}\frac{\Gamma (s_T-z_1-z_2) \Gamma (z_1) \Gamma (z_2)}{\Gamma (s_T)} 
\notag\\
& \times \sum_{m_1, \ldots, m_n\geq 1}\frac{\mu_1^{m_1} \cdots \mu_{n-1}^{m_{n-1}} \, \dd z_1 \, \dd z_2}{\left(\prod_{k=1}^{T-1} P_k(\m')^{s_k}\right) Q_0(\m')^{s_T-z_1-z_2} \left( Q_1(\m') m_n^{a_1}\right)^{z_1} 
\left( Q_2(\m') m_n^{a_2}\right)^{z_2}}\notag\\
&= \frac{1}{(2\pi \ii)^2} \int_{(\rho_1)}\int_{(\rho_2)}\frac{\Gamma (s_T-z_1-z_2) \Gamma (z_1) \Gamma (z_2)}{\Gamma (s_T)} 
\notag\\
& \times \sum_{m_1, \ldots, m_{n-1}\geq 1}\frac{\mu_1^{m_1}\cdots \mu_{n-1}^{m_{n-1}} \zeta \left( a_1 z_1+ a_2 z_2\right) }{\left(\prod_{k=1}^{T-1} P_k(\m')^{s_k}\right) Q_0(\m')^{s_T-z_1-z_2} Q_1(\m')^{z_1} Q_2(\m')^{z_2}} \, \dd z_1 \, \dd z_2 \notag\\
&= \frac{1}{(2\pi \ii)^2} \int_{(\rho_1)}\int_{(\rho_2)}\frac{\Gamma (s_T-z_1-z_2) \Gamma (z_1) \Gamma (z_2)}{\Gamma (s_T)} 
\notag\\
&  \quad \times \frac{\zeta_{n-1}^{\dc}(\s', s_T-z_1-z_2, z_1, z_2; \mathbf{P(Q)};\boldsymbol{\mu}_{n-1})
 G(z_1, z_2)}{ a_1 z_1+ a_2 z_2-1} \,
\dd z_1 \, \dd z_2,
\end{align}
where $\ds G(z_1, z_2) := \left( a_1 z_1+ a_2 z_2-1\right) \zeta \left(a_1 z_1+ a_2 z_2\right).$

Propositions \ref{thm_dC} and \ref{thm_dC_compl} imply that $\zeta_{n-1}^{\dc}(\cdot; \mathbf{P(Q)};\boldsymbol{\mu}_{n-1} )$ has holomorphic continuation of moderate growth to the whole space $\C^{T+2}$. In addition, it is clear from classical properties of Riemann zeta-function that $G$ has also holomorphic continuation of moderate growth to the whole space $\C^2$.
Therefore
we deduce by residue theorem and analytic continuation that (\ref{trucoublie}) holds if we assume that 
$\rho_1, \rho_2 >0, a_1 \rho_1 +a_2\rho_2 >1$ and $\s=(\s', s_T) \in \C^{T-1}\times \C$ such that $\Re s_k >1/b$ for any $k=1,\ldots, T-1$ and 
$\Re s_T >\max(1/a,\rho_1+\rho_2) $. 
We assume in the sequel that these assumptions hold. We assume also that $\rho_2<1/a_2$.

Let $M_1 \in \N_0$ and $\eps_1 \in (\rho_2,1)$.
Shift the path $(\rho_1)$ to $(-M_1-\eps_1)$. 
This shifting is possible, because 
$\zeta_{n-1}^{\dc}(\cdot; \mathbf{P(Q)};\boldsymbol{\mu}_{n-1})$ and $G$ are holomorphic of moderate growth, and gamma-factors are well estimated by Stirling's formula.
Relevant poles with respect to $z_1$ are
$z_1=-k_1$ ($0\leq k_1\leq M_1$) and 
$z_1=\frac{1-a_2 z_2}{a_1}$ (noting that by our assumptions we have 
$-M_1-\eps_1<\frac{1-a_2\rho_2}{a_1}<\rho_1$).
Therefore the residue theorem (in $z_1$) imply that 
\begin{align}\label{type12s}
\zeta_n(\s;\P;\mub_{n-1})
=\sum_{k_1=0}^{M_1}I_1(k_1)+I_2+
\frac{1}{\Gamma (s_T)}\mathcal R_{M_1+\eps_1}(\s),
\end{align}
where
\begin{align*}
&I_1(k_1)= \frac{(-1)^{k_1}}{2\pi \ii k_1!} \int_{(\rho_2)}\frac{\Gamma (s_T+k_1-z_2) \Gamma (z_2)}{\Gamma (s_T)} \\
& \quad\times  \zeta_{n-1}^{\dc}(\s', s_T+k_1-z_2, -k_1, z_2; \mathbf{P(Q)};\boldsymbol{\mu}_{n-1}) \frac{G(-k_1, z_2)}{a_2 z_2-a_1 k_1-1}\, \dd z_2,\\
&I_2= \frac{1}{2\pi \ii a_1} \int_{(\rho_2)}\frac{\Gamma \left(s_T+ \frac{(a_2-a_1)z_2-1}{a_1}\right) \Gamma (z_2)}{\Gamma (s_T)} \Gamma\left(\frac{1-a_2z_2}{a_1}\right)\\
& \times \zeta_{n-1}^{\dc}(\s', s_T+ \frac{(a_2-a_1)z_2-1}{a_1}, \frac{1-a_2z_2}{a_1}, z_2; \mathbf{P(Q)};\boldsymbol{\mu}_{n-1}) \, G(\frac{1-a_2z_2}{a_1}, z_2)\, \dd z_2,
\end{align*}
and
\begin{align*}
&\mathcal R_{M_1+\eps_1}(\s) =  \frac{1}{(2\pi \ii)^2} \int_{(-M_1-\eps_1)}\int_{(\rho_2)}\Gamma (s_T-z_1-z_2) \Gamma (z_1) \Gamma (z_2) \\
&\qquad  \times \zeta_{n-1}^{\dc}(\s', s_T-z_1-z_2, z_1, z_2; \mathbf{P(Q)};\boldsymbol{\mu}_{n-1}) \frac{G(z_1, z_2)}{a_1z_1+a_2z_2-1} \, \dd z_1 \, \dd z_2.
\end{align*}
Let $M_2 \in \N_0$ and $\eps_2 \in (0,1)$. We shift the path of the integral
in the definition of $I_1(k_1)$ from $(\rho_2)$ to $(-M_2-\eps_2)$, and that in the definition of $I_2$ from $(\rho_2)$ to  
$(M_2+\eps_2)$.   Since $\rho_2 \in (0, 1/a_2)$, we obtain by the residue theorem (in $z_2$) that  
\begin{align}\label{keyformula1}
&\zeta_n(\s;\P;\mub_{n-1})\notag\\
&=-\sum_{k_1=0}^{M_1}\sum_{k_2=0}^{M_2}\frac{(-1)^{k_1+k_2}}{k_1! k_2!}
\frac{\Gamma (s_T+k_1+k_2)}{\Gamma (s_T)} \notag \\
& \times \zeta_{n-1}^{\dc}(\s', s_T+k_1+k_2, -k_1, -k_2; \mathbf{P(Q)};\boldsymbol{\mu}_{n-1}) \frac{G(-k_1, -k_2)}{a_2k_2+a_1k_1+1}\notag\\
&  + \sum_{k_2=0}^{\left\lfloor\frac{a_2 M_2-1}{a_1}\right\rfloor}\frac{(-1)^{k_2}}{k_2! a_2}
\frac{\Gamma \left(s_T+\frac{(a_2-a_1)k_2-1}{a_2}\right) \Gamma\left(\frac{a_1 k_2+1}{a_2}\right)}{\Gamma (s_T)} \notag 
\\
& \times\zeta_{n-1}^{\dc}\left(\s', s_T+\frac{(a_2-a_1)k_2-1}{a_2}, - k_2, \frac{a_1k_2+1}{a_2}; \mathbf{P(Q)};\boldsymbol{\mu}_{n-1}\right)\notag\\
&\qquad\times G\left(- k_2, \frac{a_1 k_2+1}{a_2}\right)\notag\\
& +\frac{1}{\Gamma (s_T)} \mathcal R _{\M, {\mathbf{\eps}}} (\s), 
\end{align}
where
$$
\mathcal R _{\M, {\mathbf{\eps}}} (\s)= \sum_{k_1=0}^{M_1}\frac{(-1)^{k_1}}{2\pi \ii } \mathcal R_{M_2+\eps_2, k_1}(\s)+ \mathcal V_{M_2+\eps_2}(\s) + \mathcal R_{M_1+\eps_1}(\s),$$
with  
\begin{align*}
\mathcal{R}_{M_2+\eps_2, k_1}&(\s) = \frac{1}{2\pi \ii} \int_{(-M_2-\eps_2)}\Gamma (s_T+k_1- z_2) \Gamma (z_2) \\
& \times\zeta_{n-1}^{\dc}(\s', s_T+k_1-z_2, -k_1, z_2; \mathbf{P(Q)};\boldsymbol{\mu}_{n-1}) \frac{G(-k_1, z_2)}{a_2z_2-a_1 k_1-1}\, \dd z_2;\\
\mathcal{V}_{M_2+\eps_2}&(\s)= \frac{1}{2\pi \ii a_1} \int_{(M_2+\eps_2)}\Gamma \left(s_T+ \frac{(a_2-a_1)z_2-1}{a_1}\right) \Gamma (z_2)  \Gamma\left(\frac{1-a_2z_2}{a_1}\right)\\
 & \times\zeta_{n-1}^{\dc}\left(\s', s_T+ \frac{(a_2-a_1)z_2-1}{a_1}, \frac{1-a_2z_2}{a_1}, z_2; \mathbf{P(Q)};\boldsymbol{\mu}_{n-1}\right)\\
 &\qquad\times G(\frac{1-a_2z_2}{a_1}, z_2)\, \dd z_2;\\
\mathcal{R}_{M_1+\eps_1}&(\s) =  \frac{1}{(2\pi \ii)^2} \int_{(-M_1-\eps_1)}\int_{(\rho_2)}\Gamma (s_T-z_1-z_2) \Gamma (z_1) \Gamma (z_2) \\
& \times \zeta_{n-1}^{\dc}\left(\s', s_T-z_1-z_2, z_1, z_2; \mathbf{P(Q)};\boldsymbol{\mu}_{n-1}\right) \frac{G(z_1, z_2)}{a_1z_1+a_2z_2-1} \, \dd z_1 \, \dd z_2.
\end{align*}
It is clear that 

\begin{itemize}
\item[$\bullet$] $\s \mapsto \mathcal R_{M_2+\eps_2,k_1}(\s)$ is holomorphic in $\C^{T-1}\times \{\Re s_T > -M_2-\eps_2 -k_1\}$;
\item[$\bullet$] $\s \mapsto \mathcal V_{M_2+\eps_2}(\s)$ is holomorphic in $\C^{T-1}\times \left\{\Re s_T > \frac{1-(a_2-a_1) (M_2+\eps_2)}{a_1}\right\};$
\item[$\bullet$] $\s\mapsto \mathcal R_{M_1+\eps_1}(\s)$ is holomorphic in $\C^{T-1}\times \left\{\Re s_T > \rho_2-\eps_1-M_1\right\}$.
\end{itemize}

Let $K\in \N_0$. Choose $M_1 = K$ and $M_2= \max \left(K, \left\lceil\frac{1+a_1 K}{a_2-a_1}\right\rceil\right)$.
Under these choices, it follows that there exists $\delta >0$ such that $\s\mapsto \mathcal R _{\M, {\mathbf{\eps}}} (\s)$ has {\it holomorphic} continuation to $\C^{T-1} \times \{\Re s_T > -K-\delta\}$.
We deduce then from formula (\ref{keyformula1}) that $\s \mapsto \zeta_n(\s;\P;\mub_{n-1})$ has a meromorphic continuation 
to $\C^{T-1} \times \{\Re s_T > -K-\delta\}$ and that its possible singularities in this domain come only from the factors 
$$
\frac{\Gamma \left(s_T+\frac{(a_2-a_1)k_2-1}{a_2}\right)}{\Gamma (s_T)}\quad (k_2 \in \N_0, \, 0\leq k_2\leq  \left\lfloor\frac{a_2 M_2-1}{a_1}\right\rfloor).
$$
Whether these are really singular or not can be determined as
follows.

\begin{itemize}
\item[$\bullet$] If $a_2 \mid (a_2-a_1)k_2-1$ , that is $(a_2-a_1)k_2-1= a_2 u_2$ $(u_2 \in \N)$, then
$$\frac{\Gamma \left(s_T+\frac{(a_2-a_1)k_2-1}{a_2}\right)}{\Gamma (s_T)}=\frac{\Gamma\left(s_T+u_2\right)}{\Gamma (s_T)}$$ is holomorphic in $s_T\in \C$. 
\item[$\bullet$] If $a_2 \nmid (a_2-a_1)k_2-1$, that is $(a_2-a_1)k_2-1= a_2 (u_2-1) +r$ $(u_2, r \in \N_0, \, 1\leq r\leq a_2-1)$, then
$$\frac{\Gamma \left(s_T+\frac{(a_2-a_1)k_2-1}{a_2}\right)}{\Gamma (s_T)}=\frac{\Gamma\left(s_T+u_2-1+\frac{r}{a_2}\right)}{\Gamma (s_T)}$$
is meromorphic in $\C$ with only simple poles in the point $s_T=1-\frac{r}{a_2}-u_2-\ell$ $(\ell \in \N_0)$.
\end{itemize}

By letting $K$ to infinity, we deduce that
$\s \mapsto \zeta_n(\s;\P;\mub_{n-1})$ has a meromorphic continuation to the whole space $\C^T$ and that its singularities are located in the union of the hyperplanes $s_T=1-\frac{r}{a_2}-j$ $(j, r\in \N_0, \,1\leq r \leq a_2 -1)$.

Let $\Nb=(\Nb', N_T)\in \N_0^{T-1}\times \N_0$.
 Set $K=N_T$, $M_1 = K$ and $M_2= \max \left(K, \left\lceil\frac{1+a_1 K}{a_2-a_1}\right\rceil\right)$.
Since $ R _{\M, {\mathbf{\eps}}} (\s)$ is regular in $\s=-\Nb$, the functions  $\ds \Gamma \left(s_T+\frac{(a_2-a_1)k_2-1}{a_2}\right) $ ($a_2 \nmid (a_2-a_1)k_2-1$) are regular in $s_T=-N_T$ and $\displaystyle \frac{1}{\Gamma (s_T)}\Big|_{s_T=-N_T}=0$,
we deduce from (\ref{keyformula1}) that
\begin{align}\label{keyformula2}
&\zeta_n(-\Nb;\P;\mub_{n-1})\notag\\
&=-\sum_{k_1=0}^{M_1}\sum_{k_2=0}^{M_2}\frac{(-1)^{k_1+k_2}}{k_1! k_2!}
\frac{\Gamma (s_T+k_1+k_2)}{\Gamma (s_T)}\Big|_{s_T=-N_T} \notag \\
& \times\zeta_{n-1}^{\dc}(-\Nb', -N_T+k_1+k_2, -k_1, -k_2; \mathbf{P(Q)};\boldsymbol{\mu}_{n-1})\frac{G(-k_1, -k_2)}{a_2k_2+a_1k_1+1}\notag\\
 & + \sum_{\substack{k_2=1 \\ a_2 \mid a_1 k_2+1}}^{\left\lfloor\frac{a_2 M_2-1}{a_1}\right\rfloor}\frac{(-1)^{k_2}
\left(\frac{a_1 k_2+1}{a_2}-1\right)!}{k_2! a_2} \,
\frac{\Gamma \left(s_T+\frac{(a_2-a_1)k_2-1}{a_2}\right)}{\Gamma (s_T)}\Big|_{s_T=-N_T} \notag 
\\
&\times \zeta_{n-1}^{\dc}\left(-\Nb', -N_T+\frac{(a_2-a_1)k_2-1}{a_2}, - k_2, \frac{a_1k_2+1}{a_2}; \mathbf{P(Q)};\boldsymbol{\mu}_{n-1}\right)\notag\\
&\qquad \times G\left(- k_2, \frac{a_1 k_2+1}{a_2}\right).
\end{align}
Moreover, it is easy to see that

\begin{itemize}
\item[$\bullet$] $\displaystyle \frac{\Gamma (s_T+k)}{\Gamma (s_T)}\Big|_{s_T=-N_T}= \prod_{j=0}^{k-1}(-N_T+j) :=(-N_T)_{k}$\\ where $(x)_n:=\prod_{j=0}^{n-1} (x+j)$ is the Pochhammer symbol,
hence especially the above is equal to $0$ if $k>N_T$;
\item[$\bullet$] $\displaystyle G(-k_1, -k_2) = -(a_1k_1+a_2k_2+1) \zeta \left(-(a_1k_1+a_2k_2)\right)$, 
which is equal to
$(-1)^{a_1k_1+a_2k_2+1} B_{a_1k_1+a_2k_2+1}$;
\item[$\bullet$] $G\left(- k_2, \frac{a_1 k_2+1}{a_2}\right)=1$; 
\item[$\bullet$] $\displaystyle \frac{\Gamma (s_T+k_1+k_2)}{\Gamma (s_T)}\Big|_{s_T=-N_T}= 0$ if $k_1+k_2 \geq N_T+1$;
\item[$\bullet$] $\displaystyle \frac{\Gamma (s_T+k_1+k_2)}{\Gamma (s_T)}\Big|_{s_T=-N_T}= \frac{(-1)^{k_1+k_2}N_T!}{(N_T-k_1-k_2)!}$ if $k_1+k_2 \leq N_T$.
\end{itemize}
Substituting these five points into formula (\ref{keyformula2}), we obtain:
\begin{thm}\label{thm_d=2}
When $P_T$ is of the form \eqref{P_t_form2}, we have for $\Nb \in \N_0^T$, 
\begin{align}\label{keyformula3}
&\zeta_n(-\Nb;\P;\mub_{n-1})\notag\\
&=\sum_{\substack{k_1, k_2 \geq 0 \\ k_1+k_2 \leq N_T}}\frac{N_T !}{(N_T-k_1-k_2)! k_1! k_2!}  \notag \\
&\times\zeta_{n-1}^{\dc}(-\Nb', -N_T+k_1+k_2, -k_1, -k_2; \mathbf{P(Q)};\boldsymbol{\mu}_{n-1})\notag\\
&\qquad\times\frac{(-1)^{a_1k_1+a_2k_2} B_{a_1k_1+a_2k_2+1}}{a_2k_2+a_1k_1+1}
\notag\\
&  + \sum_{\substack{k_2=1 \\ a_2  | a_1 k_2+1}}^{\left\lfloor\frac{a_2 N_T+1}{a_2-a_1}\right\rfloor}\frac{(-1)^{k_2}
\left(\frac{a_1 k_2+1}{a_2}-1\right)!}{k_2! a_2}
\left(-N_T\right)_{\frac{(a_2-a_1)k_2-1}{a_2}} \notag 
\\
& \times\zeta_{n-1}^{\dc}\left(-\Nb', -N_T+\frac{(a_2-a_1)k_2-1}{a_2}, - k_2, \frac{a_1k_2+1}{a_2}; \mathbf{P(Q)};\boldsymbol{\mu}_{n-1}\right).
\end{align}
\end{thm}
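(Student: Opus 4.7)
My plan is to execute the Mellin-Barnes strategy already begun in the preceding paragraphs and to carry it through to its conclusion at the non-positive integer points. The starting point is the multiple Mellin-Barnes formula (Proposition \ref{MMB_formula}) applied to $(Q_0(\m') + Q_1(\m') m_n^{a_1} + Q_2(\m') m_n^{a_2})^{-s_T}$, which splits $P_T^{-s_T}$ into a two-variable contour integral in $(z_1, z_2)$ over vertical lines $(\rho_1), (\rho_2)$ with $\rho_j > 0$ and $a_1\rho_1 + a_2\rho_2 > 1$. After interchanging summation and integration (valid in the absolutely convergent region \eqref{1-2}), the $m_n$-sum collapses to $\zeta(a_1 z_1 + a_2 z_2)$ and the remaining $(m_1, \ldots, m_{n-1})$-sum is precisely a de Crisenoy series $\zeta_{n-1}^{\dc}(\s', s_T - z_1 - z_2, z_1, z_2; \mathbf{P(Q)}; \mub_{n-1})$ with $\mathbf{P(Q)} = (P_1, \ldots, P_{T-1}, Q_0, Q_1, Q_2)$, producing representation \eqref{trucoublie}.

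I would then perform two successive contour shifts. Shifting the $z_1$-path leftward to $\Re z_1 = -M_1 - \eps_1$ produces residues at $z_1 = -k_1$ (from $\Gamma(z_1)$) and at $z_1 = (1 - a_2 z_2)/a_1$ (from the zeta pole). In each of the resulting $z_2$-integrals I would shift leftward to $\Re z_2 = -M_2 - \eps_2$ for the $z_1 = -k_1$ residues (collecting poles at $z_2 = -k_2$) and rightward to $\Re z_2 = M_2 + \eps_2$ for the zeta-residue term (collecting poles of $\Gamma((1 - a_2 z_2)/a_1)$ at $z_2 = (a_1 k_2 + 1)/a_2$). Each shift is justified by combining Stirling's asymptotics on the Gamma factors with the moderate-growth estimate of Proposition \ref{thm_dC_compl} on the de Crisenoy series, together with classical moderate-growth bounds on $G(z_1, z_2) := (a_1 z_1 + a_2 z_2 - 1)\zeta(a_1 z_1 + a_2 z_2)$. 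Choosing $M_1 = N_T$ and $M_2 = \max(N_T, \lceil (1 + a_1 N_T)/(a_2 - a_1) \rceil)$, the remainder, divided by $\Gamma(s_T)$, is holomorphic in a neighborhood of $\s = -\Nb$; this yields the identity \eqref{keyformula2}.

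The final step is a transparent evaluation at $s_T = -N_T$ using the elementary identities already listed: $\Gamma(s_T + k)/\Gamma(s_T)|_{s_T = -N_T} = (-N_T)_k$ (which vanishes when $k > N_T$), the zeta-Bernoulli identity $\zeta(-m) = -B_{m+1}/(m+1)$ applied to $G(-k_1, -k_2)$, the vanishing relation $a_1(-k_2) + a_2 \cdot (a_1 k_2 + 1)/a_2 - 1 = 0$ yielding $G(-k_2, (a_1 k_2 + 1)/a_2) = 1$, and the explicit Pochhammer evaluation of the factorial ratio. The essential mechanism is that the global prefactor $1/\Gamma(s_T) = 1/\Gamma(-N_T) = 0$ annihilates every term whose numerator Gamma factor is holomorphic at $s_T = -N_T$. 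In the first double sum this leaves the surviving ratio $N_T!/(N_T - k_1 - k_2)!$; in the second sum, a compensating pole in $\Gamma(s_T + ((a_2 - a_1) k_2 - 1)/a_2)$ at $s_T = -N_T$ arises exactly when $a_2 \mid a_1 k_2 + 1$, which produces both the divisibility restriction displayed in \eqref{keyformula3} and the factorial $((a_1 k_2 + 1)/a_2 - 1)!$ as its residue.

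The principal obstacle I anticipate is the rigorous justification of the two iterated contour shifts, which demands a careful combination of Stirling's bounds with Proposition \ref{thm_dC_compl} to ensure vanishing of the horizontal boundary contributions on the far vertical lines. A secondary delicacy is the bookkeeping of which residues survive the $1/\Gamma(-N_T) = 0$ cancellation, in particular the isolation of the divisibility condition $a_2 \mid a_1 k_2 + 1$ in the second sum. Once these points are addressed, assembling the Bernoulli numbers and Pochhammer symbols into \eqref{keyformula3} is a routine rearrangement of signs.
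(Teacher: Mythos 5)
Your proposal is correct and follows essentially the same route as the paper: the multiple Mellin--Barnes representation \eqref{trucoublie}, a leftward shift in $z_1$ collecting the poles at $z_1=-k_1$ and at $z_1=(1-a_2z_2)/a_1$, then a leftward (resp.\ rightward) shift in $z_2$ for the two resulting families of integrals, with the same choice of $M_1,M_2$ and the same evaluation at $s_T=-N_T$ via the vanishing of $1/\Gamma(-N_T)$, the Pochhammer ratios, and $\zeta(-m)=-\widetilde{B}_{m+1}/(m+1)$. The only cosmetic imprecision is that the factorial $\bigl(\frac{a_1k_2+1}{a_2}-1\bigr)!$ arises as the value of $\Gamma(z_2)$ at the crossed pole $z_2=\frac{a_1k_2+1}{a_2}$ rather than as a residue of the compensating Gamma factor, but this does not affect the argument.
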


\begin{rem}
Using formula (\ref{keyformula3}), we obtain an explicit expression of the values of
$\zeta_n(-\Nb;\P;\mub_{n-1})$ at non-negative integer points in terms of the values of de Crisenoy's series $\zeta_{n-1}^{\dc}(\cdot;\mathbf{P(Q)};\boldsymbol{\mu}_{n-1})$ in rational points. 
In the first sum on the right-hand side of formula (\ref{keyformula3}) only the values of $\zeta_{n-1}^{\dc}(\cdot; \mathbf{P(Q)};\boldsymbol{\mu}_{n-1})$ at non-negative integer points appear. So thanks to de Crisenoy's work this sum is an element of the field generated over $\Q$ by $\mu_1,\ldots, \mu_{n-1}$ and the coefficients of the polynomials $P_1,\ldots, P_T$.
However, in the second sum there appear factors of the form 
$$
\zeta_{n-1}^{\dc}\left(-\Nb', -N_T+\frac{(a_2-a_1)k_2-1}{a_2}, - k_2, \frac{a_1k_2+1}{a_2}; \mathbf{P(Q)};\boldsymbol{\mu}_{n-1}\right).
$$ 
We observe that
these factors are not necessary in the field generated over $\Q$ by $\mu_1,\ldots,\mu_{n-1}$ and the coefficients of the polynomials $P_1,\ldots, P_T$, and can be {\it highly transcendental} (see Example \ref{transcendental} in Section 8).
\end{rem}

\begin{example}
Assume that $n=d=T=2$, $a_1=1$ and $a_2=2$. Then $k_2$ in the second sum on the right-hand side of \eqref{keyformula3} should be odd, so
we put $k_2=2u_2+1$.
Then \eqref{keyformula3} implies that for $\Nb=(N_1, N_2)\in \N_0^2$ we have
\begin{align}\label{keyformula3bis}
&\zeta_2(-\Nb,\P,\mu_1)=\sum_{\substack{k_1, k_2 \geq 0 \\ k_1+k_2 \leq N_2}}\frac{N_2 !}{(N_2-k_1-k_2)! k_1! k_2!} \notag \\
& \qquad\times\zeta_1^{\dc}(-N_1, -N_2+k_1+k_2, -k_1, -k_2; \mathbf{P(Q)},\mu_1) \frac{(-1)^{k_1} B_{k_1+2k_2+1}}{k_1+2k_2+1} \notag\\
 & - \sum_{u_2=0}^{N_2}\frac{(-1)^{u_2} (u_2!)^2 \binom{N_2}{u_2}}{2(2u_2+1)!} \zeta_1^{\dc}(-N_1, -N_2+u_2, -2u_2-1, u_2+1; \mathbf{P(Q)},\mu_1).
\end{align}
\end{example}

\section{General case}
\label{section:general_case}

Now we proceed to the discussion of the general case. In this section we state our main general results. Recall that $\boldsymbol{\mu}_{n-1}=(\mu_1,\ldots,\mu_{n-1}) \in (\mathbb{T}\setminus\{1\})^{n-1}$.   Consider polynomials 
$$ P_1,\ldots,P_{T-1} \in \mathbb{R}[X_1,\ldots,X_{n-1}] $$ 
satisfying HDF, with $T \geq 1, \ n \geq 1$, and one more polynomial $P_T=P_T(X_1,\ldots,X_n)$ defined by \eqref{1-5} with $a_0<\cdots<a_d$ being integers, where $Q_0,\ldots,Q_d \in \mathbb{R}[X_1,\ldots,X_{n-1}]$ are polynomials satisfying HDF. Furthermore, if $n \geq 2$, we assume that \eqref{eq:condition_polynomials_tends_infinity} holds. We set $\alpha_j:=a_j-a_0$ $(1 \leq j \leq d)$. Observe that the tuple $\boldsymbol{\alpha}:=(\alpha_1,\ldots,\alpha_d)\in \mathbb{N}^d$ is a strictly increasing sequence of positive integers (i.e. $0 < \alpha_1 < \alpha_2 < \cdots < \alpha_d$).

In this section, we give the meromorphic continuation of $\zeta_{n}(\mathbf{s};\mathbf{P};\boldsymbol{\mu}_{n-1})$ to the whole complex space $\C^T$ and describe the set of its singularities in Theorem \ref{th:analytic_continuation_zeta_general_case} and Corollary \ref{cor:set_of_singularities_zeta_n_general_case}. We study its values at non-positive integer points in Theorem \ref{th:value_zeta_n}.

Note that in the case $d=0$, when $P_T(X_1,\ldots,X_n)=Q(X_1,\ldots,X_{n-1})X_n^{a_0}$, the function $\zeta_{n}(\mathbf{s};\mathbf{P};\boldsymbol{\mu}_{n-1})$ corresponds to a product of the Riemann zeta-function and a completely twisted multiple zeta-function, thus its meromorphic continuation and values can be directly studied by well-known facts about the Riemann zeta-function and by de Crisenoy's results \cite{dC}. Therefore, we now assume that $d>0$.

\subsection{Notations}

Let $\mathbf{k}=(k_1,\ldots,k_r) \in \mathbb{N}_0^r, \ \mathbf{x}=(x_1,\ldots,x_r), \ \mathbf{y}=(y_1,\ldots,y_r) \in \mathbb{C}^r$ where $r \in \mathbb{N}_0$. We write
$$ \mathbf{k}!:=k_1! \cdots k_r!, \quad \langle \boldsymbol{x}|\mathbf{y} \rangle:= x_1 y_1+\cdots +x_r y_r, \quad |\mathbf{x}|:=x_1+\cdots+x_r. $$
When $r=0$, the only tuple in $\mathbb{N}^0_0$ is the empty tuple, denoted by $\emptyset$. We assume that
$$ \emptyset !=1, \quad \langle \emptyset | \emptyset \rangle=0, \quad |\emptyset|=0. $$

For all $x,y \in \mathbb{R}\cup \{ -\infty,+\infty \}$, we define
$$ \mathcal{H}_T(x,y):= \{ \mathbf{s} \in \mathbb{C}^T \mid y>\sigma_T>x \}. $$
For all $\varepsilon \in (0,1)$, $M_1,\ldots,M_d \in \mathbb{N}_0$, $\eta_1,\ldots,\eta_d \in (0,1)$ we set
$$ \mathcal{H}_{\varepsilon}(\mathbf{M},\boldsymbol{\eta}) := \mathcal{H}_T\left(\max\left(\varepsilon,\frac{1+\varepsilon}{a_d}\right)-\frac{x(\mathbf{M},\boldsymbol{\eta})}{a_d},\frac{1-\varepsilon+y(\mathbf{M},\boldsymbol{\eta})}{a_0}\right), $$
where 
\begin{align*}
    x(\mathbf{M},\boldsymbol{\eta}):=&\min_{1 \leq j \leq d-1}\{(a_d-a_j)(M_j+\eta_j),M_d+\eta_d\}, \\
    y(\mathbf{M},\boldsymbol{\eta}):=&\min_{1 \leq j \leq d}\{(a_j-a_0)(M_j+\eta_j)\}.
\end{align*}
If $a_0=0$, we replace $\frac{1-\varepsilon+y(\mathbf{M},\boldsymbol{\eta})}{a_0}$ with $+\infty$ in the definition of $\mathcal{H}_{\varepsilon}(\mathbf{M},\boldsymbol{\eta})$.

\subsection{Statement of theorems}

\begin{definition}
\label{def:F}
Let $\mathbf{P(Q)}:=(P_1,\ldots,P_{T-1},Q_0,\ldots, Q_d)$. We define the function $F:\mathbb{C}^{T+d} \to \mathbb{C}$ as the holomorphic continuation to the whole complex space of
\begin{align*}
    F(\mathbf{s};\mathbf{z}):= (a_0s_T+\langle \boldsymbol{\alpha},\mathbf{z} \rangle-1) &\zeta(a_0s_T+\langle \boldsymbol{\alpha},\mathbf{z} \rangle) \\
    & \times \zeta_{n-1}^{\dc}\left(s_1,\ldots,s_{T-1},s_T-|\mathbf{z}|,\mathbf{z}; \mathbf{P(Q)};\boldsymbol{\mu}_{n-1}\right),
\end{align*}
where $\mathbf{s}=(s_1,\ldots,s_T) \in \mathbb{C}^T$, $\mathbf{z}=(z_1,\ldots,z_d) \in \mathbb{C}^d$, and where $\zeta^{\dc}$ is defined in \eqref{eq:def_zeta_de_crisenoy}.
\end{definition}

First observe that by Proposition \ref{thm_dC_compl}, $F$ is of moderate growth. Also note that in the case $n=1$, the polynomials $P_1,\ldots,P_{T-1},Q_0,\ldots, Q_d$ correspond to real numbers $p_1,\ldots,p_{T-1},q_0,\ldots,q_d \in \mathbb{R} \setminus\{0\}$, and by convention we have
$$ \zeta_{0}(s_1,\ldots,s_{T-1},s_T-|\mathbf{z}|,\mathbf{z}; \mathbf{P(Q)};\boldsymbol{\mu}_0)=q_0^{-s_T} \left(\prod_{t=1}^{T-1} p_t^{-s_t}\right) \prod_{j=1}^d \left(\frac{q_0}{q_j}\right)^{z_j}. $$

\begin{thm}
\label{th:analytic_continuation_zeta_general_case}
Let $\epsilon>0$ be sufficiently small. Then $\zeta_{n}(\mathbf{s};\mathbf{P};\boldsymbol{\mu}_{n-1})$ has a meromorphic continuation on $\mathcal{H}_{\varepsilon}(\mathbf{M},\boldsymbol{\eta})$ by the form given by
\begin{align}
    \zeta_{n}&(\mathbf{s};\mathbf{P};\boldsymbol{\mu}_{n-1})= \frac{H(\mathbf{s})}{\Gamma(s_T)}+ \sum_{k_1=0}^{M_1} \cdots \sum_{k_d=0}^{M_d} \frac{(-1)^{|\mathbf{k}|}}{\mathbf{k}!} \frac{F(\mathbf{s};-\mathbf{k})\Gamma(s_T+|\mathbf{k}|)}{\Gamma(s_T)(a_0s_T-\langle \boldsymbol{\alpha}|\mathbf{k}\rangle-1)} \label{eq:analytic_continuation_zeta_general_case} \\ 
    & +\sum_{\ell_1=0}^{M_1} \cdots \sum_{\ell_{d-1}=0}^{M_{d-1}} \frac{(-1)^{|\boldsymbol{\ell}|}}{(a_d-a_0)\boldsymbol{\ell}!} \frac{F(\mathbf{s};(-\boldsymbol{\ell},g(s_T;-\boldsymbol{\ell})))\Gamma(\widetilde{g}(s_T;-\boldsymbol{\ell}))\Gamma(g(s_T;-\boldsymbol{\ell}))}{\Gamma(s_T)} \nonumber
\end{align}
with $\mathbf{k}=(k_1,\ldots,k_d)$ and
$\boldsymbol{\ell}=(\ell_1,\ldots,\ell_{d-1})$,
where $H(\mathbf{s})$ is a holomorphic function on $\mathcal{H}_{\varepsilon}(\mathbf{M},\boldsymbol{\eta})$, $F$ is defined in Definition \ref{def:F}, and we set for all $s_T \in \mathbb{C}, \ \mathbf{z}=(z_1,\ldots,z_{d-1}) \in \mathbb{C}^{d-1}$,
\begin{align*}
    g(s_T;\mathbf{z}):=& \frac{1-a_0s_T-\sum_{j=1}^{d-1} (a_j-a_0)z_j}{a_d-a_0}, \\
    \widetilde{g}(s_T;\mathbf{z}):=& s_T-g(s_T;\mathbf{z})-|\mathbf{z}| = \frac{a_ds_T-\sum_{j=1}^{d-1} (a_d-a_j)z_j-1}{a_d-a_0}.
\end{align*}
\end{thm}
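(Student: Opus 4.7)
The plan is to reduce the series to a $d$-fold Mellin--Barnes integral via Proposition \ref{MMB_formula}, perform the two inner summations in closed form to expose a Riemann zeta factor and a de Crisenoy series, and then shift the $d$ contours to the left, collecting residues. The two explicit finite sums in \eqref{eq:analytic_continuation_zeta_general_case} will arise from the two ``extreme'' families of residue contributions, while all remaining pieces (the fully shifted $d$-fold integral and its partial-residue siblings) will be collected into the single holomorphic function $H(\mathbf{s})$.

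More precisely, apply Proposition \ref{MMB_formula} to $P_T(\mathbf{m})^{-s_T}=\bigl(Q_0(\mathbf{m}')m_n^{a_0}+\sum_{j=1}^d Q_j(\mathbf{m}')m_n^{a_j}\bigr)^{-s_T}$ with $\lambda_0=Q_0(\mathbf{m}')m_n^{a_0}$ and $\lambda_j=Q_j(\mathbf{m}')m_n^{a_j}$ for $j\geq 1$, the interchange of sum and integral being legitimate for $\rho_j>0$ satisfying $a_0\Re s_T+\langle\boldsymbol{\alpha},\boldsymbol{\rho}\rangle>1$ and $\Re s_T$ sufficiently large. The inner sum over $m_n$ collapses to $\zeta(a_0s_T+\langle\boldsymbol{\alpha},\mathbf{z}\rangle)$ (using $a_j-a_0=\alpha_j$), and the sum over $m_1,\dots,m_{n-1}$ becomes $\zeta_{n-1}^{\dc}(s_1,\dots,s_{T-1},s_T-|\mathbf{z}|,\mathbf{z};\mathbf{P(Q)};\boldsymbol{\mu}_{n-1})$, giving the representation
\begin{equation*}
\zeta_n(\mathbf{s};\mathbf{P};\boldsymbol{\mu}_{n-1}) = \frac{1}{(2\pi i)^d\,\Gamma(s_T)} \int_{(\rho_1)}\!\cdots\!\int_{(\rho_d)} \Gamma(s_T-|\mathbf{z}|)\prod_{j=1}^d\Gamma(z_j)\,\frac{F(\mathbf{s};\mathbf{z})}{a_0s_T+\langle\boldsymbol{\alpha},\mathbf{z}\rangle-1}\,d\mathbf{z}.
\end{equation*}

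Next, shift the contours $(\rho_j)\to(-M_j-\eta_j)$ one by one. The only poles of the integrand in each variable $z_j$ are the simple poles of $\Gamma(z_j)$ at $z_j=0,-1,\dots,-M_j$ and the simple pole of the zeta factor along the hyperplane $a_0s_T+\langle\boldsymbol{\alpha},\mathbf{z}\rangle=1$. By choosing $\boldsymbol{\rho}$ and restricting $\Re s_T$ initially sufficiently large, one arranges that the zeta pole is traversed only during the final shift in $z_d$, and only in those branches where a gamma residue at $z_j=-\ell_j$ has been taken for every $j<d$; this forces $z_d=g(s_T;-\boldsymbol{\ell})$ and produces the residue weight $1/(a_d-a_0)$, while $\Gamma(s_T-|\mathbf{z}|)$ and $\Gamma(z_d)$ specialize to $\Gamma(\widetilde{g}(s_T;-\boldsymbol{\ell}))$ and $\Gamma(g(s_T;-\boldsymbol{\ell}))$, exactly reproducing the second sum of \eqref{eq:analytic_continuation_zeta_general_case}. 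The ``pure gamma'' branch $(z_1,\dots,z_d)=(-k_1,\dots,-k_d)$ yields the first sum, with denominator $a_0s_T-\langle\boldsymbol{\alpha},\mathbf{k}\rangle-1$ coming simply from the evaluation of the (non-singular) zeta factor at its generic argument. All other contributions---the fully shifted $d$-fold integral and the intermediate terms in which some residues have been taken but other variables remain integrated along shifted contours---assemble into $H(\mathbf{s})/\Gamma(s_T)$.

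Holomorphy of $H$ on $\mathcal{H}_\varepsilon(\mathbf{M},\boldsymbol{\eta})$ follows from the moderate growth of $F$ provided by Proposition \ref{thm_dC_compl}, combined with Stirling's formula on the gamma factors. The quantities $x(\mathbf{M},\boldsymbol{\eta})$ and $y(\mathbf{M},\boldsymbol{\eta})$ entering the definition of $\mathcal{H}_\varepsilon$ are engineered so that, at every intermediate contour configuration, both $\Gamma(s_T-|\mathbf{z}|)$ and $1/(a_0s_T+\langle\boldsymbol{\alpha},\mathbf{z}\rangle-1)$ remain uniformly away from their singularities, securing absolute convergence of each integral piece. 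Analytic continuation in $\mathbf{s}$ then extends the identity from the initial region of absolute convergence to all of $\mathcal{H}_\varepsilon(\mathbf{M},\boldsymbol{\eta})$. The main obstacle is the final bookkeeping: verifying that the zeta pole is captured exactly once along the shift tree---so that the explicit part of the formula reduces to just the two clean sums without stray intermediate zeta residues---and checking that the bounds built into $x(\mathbf{M},\boldsymbol{\eta})$ and $y(\mathbf{M},\boldsymbol{\eta})$ do indeed reflect the worst-case positions of the shifted contours arising in every branch of the residue computation.
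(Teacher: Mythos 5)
Your overall strategy---the $d$-fold Mellin--Barnes representation \eqref{eq:integral_representation_zeta_n}, iterated contour shifts, and the identification of the two residue families---is the same as the paper's, and the shape of the two explicit sums you predict is correct. The genuine gap is exactly the point you flag at the end as ``the main obstacle'': the claim that the zeta-pole hyperplane $a_0s_T+\langle \boldsymbol{\alpha}|\mathbf{z}\rangle=1$ is traversed only during the final shift in $z_d$, and only in branches where every $z_j$ $(j<d)$ already sits at a Gamma residue $-\ell_j$. This is asserted, not proved, and the proposed mechanism (``restrict $\Re s_T$ initially sufficiently large'') does not deliver it: when $a_0=0$ the hyperplane does not involve $s_T$ at all, and when $a_0>0$ taking $\Re s_T$ large pushes the $z_d$-crossing point $(1-a_0s_T-\sum_{j<d}\alpha_jz_j)/\alpha_d$ far to the left, so the pole may not be captured at all and the second sum would never appear; in any case the identity must ultimately be valid on $\mathcal H_\varepsilon(\mathbf M,\boldsymbol\eta)$, which contains the non-positive integers. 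Moreover, once the parameters are arranged so that the shifts of $z_1,\dots,z_{d-1}$ avoid the hyperplane, the final $z_d$-shift generically meets it also in branches where some $z_j$ remains on a shifted contour $(-M_j-\eta_j)$ (replacing $-\ell_j$ by $-M_j-\eta_j$ only moves the crossing point to the right). Your shift tree therefore produces, besides the two clean sums, stray zeta-residue integrals carrying $\Gamma(g(s_T;\cdot))\Gamma(\widetilde g(s_T;\cdot))$ along shifted contours, and one must prove these are holomorphic on all of $\mathcal H_\varepsilon(\mathbf M,\boldsymbol\eta)$, guarding against pinching of the contour by the poles of these Gamma factors as $s_T$ varies. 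None of this analysis is supplied.

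The paper resolves the bookkeeping by reversing the order and constraining $s_T$. It shifts $z_d$ \emph{first}, while $z_1,\dots,z_{d-1}$ are still on the original tiny contours $(\rho_1),\dots,(\rho_{d-1})$ (only $\rho_d$ is large), so the zeta pole is crossed exactly once, yielding the single term $J^{(d-1)}_{\emptyset}$ of Proposition \ref{prop:formula_zeta_n_shift_left}. It then restricts $s_T$ to the narrow band $\delta<\sigma_T<\delta+\varepsilon/a_0$ of Assumption \ref{assumption:delta_epsilon_rho}, and Lemma \ref{lemma:ineq_l_tilde_l_delta_epsilon} guarantees that in every subsequent shift of $z_{d-1},\dots,z_1$ (Propositions \ref{prop:iterative_structure_I_r} and \ref{prop:iterative_structure_J_r}) neither the zeta hyperplane nor the poles of $\Gamma(g)\Gamma(\widetilde g)$ are met again, so only Gamma residues accumulate and every remainder integral is visibly holomorphic on $\mathcal H_\varepsilon(\mathbf M,\boldsymbol\eta)$. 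The identity is finally transported from $\mathcal H_\varepsilon(\mathbf M,\boldsymbol\eta)\cap\mathcal H_T(\delta,\delta+\varepsilon/a_0)$ to all of $\mathcal H_\varepsilon(\mathbf M,\boldsymbol\eta)$ by analytic continuation. To repair your argument you would either need to adopt this order together with the inequalities of Assumption \ref{assumption:delta_epsilon_rho}, or carry out in full the holomorphy analysis of all the stray branches that your ordering generates.
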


By convention, the last sum of \eqref{eq:analytic_continuation_zeta_general_case} is non-empty when $d=1$. The only tuple $\boldsymbol{\ell} \in \mathbb{N}_0^0$ corresponds to the empty tuple $\emptyset$. Recall that $\emptyset ! =1$, and $|\emptyset|=0$. Since we assume that sums over the empty set vanish, we also have $g(s_T;\emptyset)=\frac{1-a_0s_T}{a_1-a_0}$ and $\widetilde{g}(s_T;\emptyset)=\frac{a_1s_T-1}{a_1-a_0}$.

\begin{cor}
\label{cor:set_of_singularities_zeta_n_general_case}
The function $\zeta_{n}(\mathbf{s};\mathbf{P};\boldsymbol{\mu}_{n-1})$ has a meromorphic continuation to the whole complex space $\mathbb{C}^T$, with singularities belonging to a countable union of hyperplanes defined by
\begin{align*}
    \mathcal{S}_{\boldsymbol{\ell}}^{(1)}:=& \{ \mathbf{s} \in \mathbb{C}^T \mid g(s_T;-\boldsymbol{\ell}) \in \mathbb{Z}_{\leq 0} \} && (\boldsymbol{\ell}=(\ell_1,\ldots,\ell_{d-1}) \in \mathbb{N}_0^{d-1}), \\
    \mathcal{S}_{\boldsymbol{\ell}}^{(2)}:=& \{ \mathbf{s} \in \mathbb{C}^T \mid \widetilde{g}(s_T;-\boldsymbol{\ell}) \in \mathbb{Z}_{\leq 0} \} && (\boldsymbol{\ell}=(\ell_1,\ldots,\ell_{d-1}) \in \mathbb{N}_0^{d-1}), \\
    \mathcal{S}_{\mathbf{k}}^{(3)}:=& \left\{ \mathbf{s} \in \mathbb{C}^T \mid a_0s_T=1+\langle \boldsymbol{\alpha}|\mathbf{k} \rangle \right\} && (\mathbf{k}=(k_1,\ldots,k_d) \in \mathbb{N}_0^d).
\end{align*}
Furthermore, non-positive integer points $\mathbf{s}=-\mathbf{N} \in \mathbb{Z}_{\leq 0}^T$ are not singularities.
\end{cor}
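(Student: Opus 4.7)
The plan is to derive the corollary directly from Theorem~\ref{th:analytic_continuation_zeta_general_case} in three steps. First, as $M_1,\ldots,M_d\to\infty$ we have $x(\mathbf{M},\boldsymbol{\eta})\to+\infty$ (because $a_d>a_j$ for $j<d$ forces each $(a_d-a_j)(M_j+\eta_j)\to+\infty$, and $M_d+\eta_d\to+\infty$) and likewise $y(\mathbf{M},\boldsymbol{\eta})\to+\infty$ (or already equals $+\infty$ when $a_0=0$). The strips $\mathcal{H}_{\varepsilon}(\mathbf{M},\boldsymbol{\eta})$ therefore exhaust $\mathbb{C}^T$, and since the meromorphic representations produced by \eqref{eq:analytic_continuation_zeta_general_case} on overlapping strips must agree by the identity principle, they patch together into a single meromorphic function on $\mathbb{C}^T$.

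Next, I would localize the singular set by scanning the factors in \eqref{eq:analytic_continuation_zeta_general_case}. The function $F$ is entire by Definition~\ref{def:F}: the prefactor $(a_0 s_T+\langle \boldsymbol{\alpha},\mathbf{z}\rangle-1)$ cancels the simple pole of $\zeta$, and $\zeta_{n-1}^{\dc}$ is entire by Propositions~\ref{thm_dC} and~\ref{thm_dC_compl}. Moreover, $\Gamma(s_T+|\mathbf{k}|)/\Gamma(s_T)=(s_T)_{|\mathbf{k}|}$ is a polynomial in $s_T$, $1/\Gamma(s_T)$ is entire, and $H$ is holomorphic on the strip. The only remaining factors that can contribute poles are $1/(a_0 s_T-\langle\boldsymbol{\alpha}|\mathbf{k}\rangle-1)$, $\Gamma(\widetilde{g}(s_T;-\boldsymbol{\ell}))$ and $\Gamma(g(s_T;-\boldsymbol{\ell}))$, whose polar loci are precisely $\mathcal{S}_{\mathbf{k}}^{(3)}$, $\mathcal{S}_{\boldsymbol{\ell}}^{(2)}$ and $\mathcal{S}_{\boldsymbol{\ell}}^{(1)}$ respectively.

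For the regularity at $\mathbf{s}=-\mathbf{N}$ I would examine each family at $s_T=-N_T$. The denominator $a_0 s_T-\langle\boldsymbol{\alpha}|\mathbf{k}\rangle-1=-a_0 N_T-\langle\boldsymbol{\alpha}|\mathbf{k}\rangle-1\leq -1$, so $-\mathbf{N}\notin \mathcal{S}_{\mathbf{k}}^{(3)}$. The value $g(-N_T;-\boldsymbol{\ell})=(1+a_0 N_T+\sum_{j=1}^{d-1}(a_j-a_0)\ell_j)/(a_d-a_0)$ is strictly positive, so $-\mathbf{N}\notin \mathcal{S}_{\boldsymbol{\ell}}^{(1)}$ and $\Gamma(g(s_T;-\boldsymbol{\ell}))$ is regular there. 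The only delicate term is $\Gamma(\widetilde{g}(s_T;-\boldsymbol{\ell}))$, which can have a simple pole at $s_T=-N_T$ when $\widetilde{g}(-N_T;-\boldsymbol{\ell})\in\mathbb{Z}_{\leq 0}$; since $\partial_{s_T}\widetilde{g}=a_d/(a_d-a_0)\neq 0$, this pole is indeed simple, and it is annihilated inside each summand by the simple zero of the shared factor $1/\Gamma(s_T)$ at $s_T=-N_T$.

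The main obstacle is the last bookkeeping step: one must verify that the simple zero of $1/\Gamma(s_T)$ genuinely cancels the simple pole produced by $\Gamma(\widetilde{g}(s_T;-\boldsymbol{\ell}))$ within each individual term of \eqref{eq:analytic_continuation_zeta_general_case}, and that no unexpected interaction creates a higher-order singularity. Because both the zero and each pole are of order one and sit as common factors inside every summand indexed by $\boldsymbol{\ell}$, no residue survives, so the formula returns a finite value at $\mathbf{s}=-\mathbf{N}$, yielding all three conclusions of the corollary.
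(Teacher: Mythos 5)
Your proposal is correct and follows essentially the same route as the paper: meromorphic continuation by letting $M_1,\ldots,M_d\to\infty$ in Theorem \ref{th:analytic_continuation_zeta_general_case}, identification of the three singular factors $\Gamma(g)$, $\Gamma(\widetilde g)$, $(a_0s_T-\langle\boldsymbol{\alpha}|\mathbf{k}\rangle-1)^{-1}$, and regularity at $-\mathbf{N}$ via the zero of $1/\Gamma(s_T)$ cancelling the only possible simple pole, coming from $\Gamma(\widetilde g(s_T;-\boldsymbol{\ell}))$. Your write-up is in fact somewhat more explicit than the paper's, which defers the cancellation bookkeeping to the proof of Theorem \ref{th:value_zeta_n}.
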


\begin{proof}
By Theorem \ref{th:analytic_continuation_zeta_general_case}, we know that $\zeta_{n}(\mathbf{s};\mathbf{P};\boldsymbol{\mu}_{n-1})$ has a meromorphic continuation to $\mathbb{C}^T$ by taking $M_1,\ldots,M_d$ sufficiently large. Also, we see that its singularities come exclusively from one of the following three terms:
$$ \Gamma(g(s_T;-\boldsymbol{\ell})), \quad \Gamma(\widetilde{g}(s_T;-\boldsymbol{\ell})), \quad \frac{1}{a_0s_T-\langle \boldsymbol{\alpha}|\mathbf{k} \rangle-1}. $$
The fact that non-positive integer points $-\mathbf{N} \in \mathbb{Z}_{\leq 0}^T$ are not singularities follows from the fact that $\frac{1}{\Gamma}$ vanishes at non-positive integers. For more details on this, we refer to the proof of Theorem \ref{th:value_zeta_n}, in Subsection \ref{subsection:values_zeta_n_general_case}.
\end{proof}

By taking $M_1,\ldots,M_d$ sufficiently large in Theorem \ref{th:analytic_continuation_zeta_general_case}, and by studying the various limits involving gamma functions appearing on the right-hand side of \eqref{eq:analytic_continuation_zeta_general_case}, we obtain the following explicit formula for the values at non-positive integer points of $\zeta_{n}(\mathbf{s};\mathbf{P};\boldsymbol{\mu}_{n-1})$ using only special values of fully twisted multiple zeta-functions $\zeta_{n-1}^{\dc}(\cdot; \cdot; \boldsymbol{\mu}_{n-1})$, twisted Bernoulli numbers $\widetilde{B}_n:= (-1)^{n}B_n$, and multinomial coefficients.

\begin{thm}
\label{th:value_zeta_n}
Let $-\mathbf{N}=(-N_1,\ldots,-N_T) \in \mathbb{Z}_{\leq 0}^T$, then
\begin{align}
    \zeta_{n}(-&\mathbf{N};\mathbf{P};\boldsymbol{\mu}_{n-1})=-\sum_{\substack{k_1,\ldots,k_d \geq 0 \\ |\mathbf{k}| \leq N_T}} \binom{N_T}{(\mathbf{k},N_T-|\mathbf{k}|)} \frac{\widetilde{B}_{a_0N_T+\langle \boldsymbol{\alpha}|\mathbf{k}\rangle+1}}{a_0N_T+\langle \boldsymbol{\alpha}|\mathbf{k} \rangle+1} \label{eq:values_zeta_n} \\
    & \hspace{89pt} \times \zeta_{n-1}^{\dc}(-N_1,\ldots,-N_{T-1},-N_T+|\mathbf{k}|,-\mathbf{k};\mathbf{P(Q)};\boldsymbol{\mu}_{n-1}) \nonumber \\
    &+ \frac{N_T!}{a_d}\sum_{i=0}^{\left\lfloor \frac{a_dN_T+1}{\alpha_d}\right\rfloor} \sum_{\substack{\ell_1,\ldots,\ell_{d-1} \geq 0 \\ \sum_{j=1}^{d-1} (a_d-a_j) \ell_j=1+a_dN_T-i\alpha_d}} \frac{(-1)^{|\boldsymbol{\ell}|+i+N_T}(|\boldsymbol{\ell}|+i-N_T-1)!}{i!\boldsymbol{\ell}!} \nonumber \\
    & \hspace{60pt} \times \zeta_{n-1}^{\dc}(-N_1,\ldots,-N_{T-1},-i,-\boldsymbol{\ell},|\boldsymbol{\ell}|+i-N_T;\mathbf{P(Q)};\boldsymbol{\mu}_{n-1}). \nonumber
\end{align}
\end{thm}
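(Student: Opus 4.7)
The plan is to specialize the meromorphic continuation formula \eqref{eq:analytic_continuation_zeta_general_case} of Theorem \ref{th:analytic_continuation_zeta_general_case} at $\mathbf{s}=-\mathbf{N}$, with $M_1,\ldots,M_d$ chosen large enough that $-\mathbf{N}\in\mathcal{H}_\varepsilon(\mathbf{M},\boldsymbol{\eta})$. Write $\mathbf{N}'=(N_1,\ldots,N_{T-1})$. The term $H(\mathbf{s})/\Gamma(s_T)$ vanishes immediately because $H$ is holomorphic near $-\mathbf{N}$ while $1/\Gamma(-N_T)=0$, so only the two explicit sums remain to be evaluated.

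For the sum indexed by $\mathbf{k}$, I would first unfold the definition of $F$:
\begin{equation*}
F(-\mathbf{N};-\mathbf{k}) = (-a_0N_T-\langle\boldsymbol{\alpha}|\mathbf{k}\rangle-1)\,\zeta(-a_0N_T-\langle\boldsymbol{\alpha}|\mathbf{k}\rangle)\,\zeta_{n-1}^{\dc}(-\mathbf{N}',-N_T+|\mathbf{k}|,-\mathbf{k};\mathbf{P(Q)};\boldsymbol{\mu}_{n-1}),
\end{equation*}
so that the linear factor cancels the denominator $a_0s_T-\langle\boldsymbol{\alpha}|\mathbf{k}\rangle-1$ appearing in \eqref{eq:analytic_continuation_zeta_general_case}. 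The Pochhammer evaluation $\Gamma(s_T+|\mathbf{k}|)/\Gamma(s_T)\big|_{s_T=-N_T}=(-1)^{|\mathbf{k}|}N_T!/(N_T-|\mathbf{k}|)!$, which vanishes for $|\mathbf{k}|>N_T$, restricts the sum to $|\mathbf{k}|\leq N_T$; combined with the identity $\zeta(-m)=-\widetilde{B}_{m+1}/(m+1)$ and the sign cancellation $(-1)^{|\mathbf{k}|}\cdot(-1)^{|\mathbf{k}|}=1$, this produces the first sum of \eqref{eq:values_zeta_n}, the multinomial coefficient emerging from $N_T!/(\mathbf{k}!(N_T-|\mathbf{k}|)!)$.

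For the sum indexed by $\boldsymbol{\ell}$, the decisive observation is that at $\mathbf{z}=(-\boldsymbol{\ell},g(s_T;-\boldsymbol{\ell}))$ a direct computation gives $a_0s_T+\langle\boldsymbol{\alpha}|\mathbf{z}\rangle\equiv 1$, so the prefactor $(a_0s_T+\langle\boldsymbol{\alpha}|\mathbf{z}\rangle-1)\zeta(a_0s_T+\langle\boldsymbol{\alpha}|\mathbf{z}\rangle)$ inside $F$ contributes $\lim_{w\to 1}(w-1)\zeta(w)=1$, while $s_T-|\mathbf{z}|=\widetilde{g}(s_T;-\boldsymbol{\ell})$. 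Hence $F(-\mathbf{N};(-\boldsymbol{\ell},g(-N_T;-\boldsymbol{\ell})))=\zeta_{n-1}^{\dc}(-\mathbf{N}',\widetilde{g},-\boldsymbol{\ell},g;\mathbf{P(Q)};\boldsymbol{\mu}_{n-1})$. Because $1/\Gamma(s_T)$ has only a simple zero at $-N_T$, an $\boldsymbol{\ell}$-term contributes nontrivially only when $\Gamma(\widetilde{g}(s_T;-\boldsymbol{\ell}))$ is singular there, which forces $\widetilde{g}(-N_T;-\boldsymbol{\ell})=-i$ for some $i\in\mathbb{N}_0$—equivalently the Diophantine constraint $\sum_{j=1}^{d-1}(a_d-a_j)\ell_j=1+a_dN_T-i\alpha_d$ with $0\leq i\leq\lfloor(a_dN_T+1)/\alpha_d\rfloor$.

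The conceptually crucial step is the combinatorial lemma that every admissible $(\boldsymbol{\ell},i)$ satisfies $g(-N_T;-\boldsymbol{\ell})=|\boldsymbol{\ell}|+i-N_T\geq 1$: the inequality $\sum_{j=1}^{d-1}(a_d-a_j)\ell_j\leq\alpha_d|\boldsymbol{\ell}|$ together with $a_0N_T\geq 0$ forces $|\boldsymbol{\ell}|+i\geq(1+a_dN_T)/\alpha_d>N_T$. This rules out the a priori conceivable second pole of $\Gamma(g(s_T;-\boldsymbol{\ell}))$, which would otherwise contradict the regularity asserted by Corollary \ref{cor:set_of_singularities_zeta_n_general_case}, so $\Gamma(g(-N_T;-\boldsymbol{\ell}))=(|\boldsymbol{\ell}|+i-N_T-1)!$ is finite. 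A first-order Taylor expansion using $\partial\widetilde{g}/\partial s_T=a_d/\alpha_d$ and $1/\Gamma(s_T)=(-1)^{N_T}N_T!(s_T+N_T)+O((s_T+N_T)^2)$ yields $\lim_{s_T\to-N_T}\Gamma(\widetilde{g}(s_T;-\boldsymbol{\ell}))/\Gamma(s_T)=(-1)^{i+N_T}\alpha_dN_T!/(i!\,a_d)$; assembling this with the prefactor $(-1)^{|\boldsymbol{\ell}|}/((a_d-a_0)\boldsymbol{\ell}!)$ from \eqref{eq:analytic_continuation_zeta_general_case} and using $\alpha_d/(a_d-a_0)=1$ reproduces the second sum of \eqref{eq:values_zeta_n}. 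The main obstacle is essentially bookkeeping—tracking signs, factorials, and the Diophantine constraint—with the one non-routine input being the strict inequality $|\boldsymbol{\ell}|+i>N_T$ above.
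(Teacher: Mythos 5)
Your proposal is correct and follows essentially the same route as the paper: specialize \eqref{eq:analytic_continuation_zeta_general_case} at $\mathbf{s}=-\mathbf{N}$, kill the $H/\Gamma$ term, evaluate the two gamma-ratio limits, and translate $\widetilde{g}(-N_T;-\boldsymbol{\ell})=-i$ into the Diophantine constraint with $g(-N_T;-\boldsymbol{\ell})=|\boldsymbol{\ell}|+i-N_T$ a positive integer. The only cosmetic difference is that you derive $g\geq 1$ from the constraint $\sum_j(\alpha_d-\alpha_j)\ell_j\leq\alpha_d|\boldsymbol{\ell}|$, whereas the paper reads positivity directly off $g(-N_T;-\boldsymbol{\ell})=(1+a_0N_T+\langle\boldsymbol{\alpha},\boldsymbol{\ell}\rangle)/\alpha_d$; both are valid.
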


Observe that when $d=1$ the tuple $\boldsymbol{\ell}$ in the second sum of the right-hand side of \eqref{eq:values_zeta_n} corresponds to the empty tuple $\emptyset$, and the last sum corresponds to summing over all integers $0 \leq i \leq \lfloor \frac{a_1N_T+1}{\alpha_1}\rfloor$ such that $i\alpha_1=1+a_1 N_T$. Therefore, the last sum of \eqref{eq:values_zeta_n} does not vanish if and only if $\alpha_1\mid (a_1 N_t +1)$. In such a case, the only integer $i$ satisfying the condition in the sum is $i=\frac{a_1N_T+1}{\alpha_1}$.

\section{Proofs of the theorems for the general case}\label{section:proofs}


In this section we prove the results stated in the preceding section.

In Subsection \ref{subsection:integral_representation_formula}, we prove that $\zeta_{n}(\mathbf{s};\mathbf{P};\boldsymbol{\mu}_{n-1})$ has holomorphic continuation to a suitable domain, and via a Mellin transform we show an integral expression of it.

In Subsection \ref{subsection:recursive_structure}, we introduce two types of integrals, $I^{(r)}$ and $J^{(r)}$, that both appear when we iterate to apply recursively the residue theorem on the integral expression formula for $\zeta_{n}(\mathbf{s};\mathbf{P};\boldsymbol{\mu}_{n-1})$. In Proposition \ref{prop:formula_zeta_n_shift_left}, we first show a relation between $\zeta_{n}(\mathbf{s};\mathbf{P};\boldsymbol{\mu}_{n-1})$ and $I^{(d-1)}$ and $J^{(d-1)}$, and a holomorphic term on a suitable domain. In Proposition \ref{prop:iterative_structure_I_r} and Proposition \ref{prop:iterative_structure_J_r}, we further apply the residue theorem to both integrals $I^{(r)}$ and $J^{(r)}$, and we express them in terms of $I^{(r-1)}$ and $J^{(r-1)}$, plus a holomorphic function on a suitable domain.

In Subsection \ref{subsection:meromorphic_continuation_zeta_n}, we prove the meromorphic continuation of $\zeta_{n}(\mathbf{s};\mathbf{P};\boldsymbol{\mu}_{n-1})$. We proceed by first applying Proposition \ref{prop:formula_zeta_n_shift_left}, then by recursively applying Propositions \ref{prop:iterative_structure_I_r} and \ref{prop:iterative_structure_J_r}. When doing that, we first have to restrict ourselves to the set $\mathcal{H}_{\varepsilon}(\mathbf{M},\boldsymbol{\eta}) \subset \mathbb{C}^T$ as defined in Theorem \ref{th:analytic_continuation_zeta_general_case}, because it contains all the domains of holomorphy of all the intermediate holomorphic functions in Propositions \ref{prop:formula_zeta_n_shift_left}, \ref{prop:iterative_structure_I_r} and \ref{prop:iterative_structure_J_r}. By the induction principle, we will show that $\zeta_{n}(\mathbf{s};\mathbf{P};\boldsymbol{\mu}_{n-1})$ can be expressed as a sum of meromorphic functions defined on a sufficiently large set.

In Subsection \ref{subsection:values_zeta_n_general_case}, we study the values at non-positive integer points of $\zeta_{n}(\mathbf{s};\mathbf{P};\boldsymbol{\mu}_{n-1})$ by evaluating at non-positive integer points the formula obtained upon the study of the meromorphic continuation of $\zeta_{n}(\mathbf{s};\mathbf{P};\boldsymbol{\mu}_{n-1})$ in Theorem \ref{th:analytic_continuation_zeta_general_case}.

\subsection{Integral expression formula}
\label{subsection:integral_representation_formula}

\begin{prop}
 For all $\rho_1, \ldots, \rho_d>0$, the function $\zeta_{n}(\mathbf{s};\mathbf{P};\boldsymbol{\mu}_{n-1})$ has a holomorphic continuation to the set
$$ \mathcal{D}_{\boldsymbol{\rho}}(\mathbf{P}):=\left\{\mathbf{s} \in \mathbb{C}^T \mid \sigma_T>\rho_1+\cdots +\rho_d \text{ and }a_0\sigma_T+\alpha_1\rho_1+\cdots +\alpha_d\rho_d>1 \right\} $$
given by
\begin{equation}
    \zeta_{n}(\mathbf{s};\mathbf{P};\boldsymbol{\mu}_{n-1}) = \frac{1}{(2 \pi \ii)^d} \int_{(\rho_1)} \cdots \int_{(\rho_d)} \frac{\Gamma(s_T-|\mathbf{z}|)F(\mathbf{s};\mathbf{z}) \prod_{j=1}^d \Gamma(z_j) \, \dd z_j }{\Gamma(s_T)(a_0s_T+\langle \boldsymbol{\alpha}|\mathbf{z} \rangle-1)}  \label{eq:integral_representation_zeta_n}
    \end{equation}
where $F(\mathbf{s};\mathbf{z})$ is defined in Definition \ref{def:F}.
\end{prop}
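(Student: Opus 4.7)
The plan is to essentially generalize the argument sketched in Section 5 for the case $d=2$ to arbitrary $d$. The key tool is the multiple Mellin--Barnes formula (Proposition \ref{MMB_formula}) applied with $r=d$ to the factor $P_T(\mathbf{m})^{-s_T}$, taking $\lambda_0 = Q_0(\mathbf{m}') m_n^{a_0}$ and $\lambda_j = Q_j(\mathbf{m}') m_n^{a_j}$ for $j=1,\ldots,d$. After this decomposition, the exponent of $m_n$ reads
\[
a_0(s_T - |\mathbf{z}|) + \sum_{j=1}^{d} a_j z_j \;=\; a_0 s_T + \langle \boldsymbol{\alpha}|\mathbf{z}\rangle,
\]
so the sum over $m_n$ produces the Riemann zeta-factor $\zeta(a_0 s_T + \langle \boldsymbol{\alpha}|\mathbf{z}\rangle)$, and the remaining sum over $m_1,\ldots,m_{n-1}$ is precisely the de Crisenoy series $\zeta_{n-1}^{\dc}(s_1,\ldots,s_{T-1}, s_T - |\mathbf{z}|, \mathbf{z}; \mathbf{P(Q)}; \boldsymbol{\mu}_{n-1})$.

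First I would fix $\mathbf{s}$ in a sufficiently deep right half-space (namely, $\Re s_T$ so large that $\Re s_T > \rho_1+\cdots+\rho_d$ and also $a_0\Re s_T + \langle\boldsymbol{\alpha}|\boldsymbol{\rho}\rangle > 1$, together with $\Re s_t$ large enough that the series $\zeta_{n-1}^{\dc}$ converges absolutely on the shifted line), which ensures both that all hypotheses of Proposition \ref{MMB_formula} are met for each fixed $\mathbf{m}'\in \mathbb{N}^{n-1}$, $m_n \in \mathbb{N}$ (since $Q_j(\mathbf{m}')>0$ and $m_n^{a_j}>0$), and that the triple series defining $\zeta_n$ converges absolutely. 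Next I would insert the Mellin--Barnes decomposition into $\zeta_n(\mathbf{s};\mathbf{P};\boldsymbol{\mu}_{n-1})$ and swap the integral with the sum over $\mathbf{m}$; this interchange is justified by Fubini's theorem, using Stirling's formula to bound $|\Gamma(s_T-|\mathbf{z}|)\prod_j\Gamma(z_j)|$ on the product of vertical lines, and combining it with the absolute convergence of the $\mathbf{m}$-sum for each fixed $\mathbf{z}$ on those lines. Recognizing the two resulting subsums then gives formula \eqref{eq:integral_representation_zeta_n}, after multiplying and dividing by $(a_0 s_T + \langle \boldsymbol{\alpha}|\mathbf{z}\rangle - 1)$ so that the meromorphic zeta-factor is packaged into the entire function $F$ of Definition \ref{def:F}.

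Finally, to upgrade this identity from a deep half-space to holomorphy on the whole set $\mathcal{D}_{\boldsymbol{\rho}}(\mathbf{P})$, I would argue that the right-hand side of \eqref{eq:integral_representation_zeta_n} already defines a holomorphic function on $\mathcal{D}_{\boldsymbol{\rho}}(\mathbf{P})$. Indeed, on $\mathcal{D}_{\boldsymbol{\rho}}(\mathbf{P})$ the condition $\sigma_T > \rho_1+\cdots+\rho_d$ keeps the Gamma factor $\Gamma(s_T-|\mathbf{z}|)$ away from its poles along the integration contour, the condition $a_0\sigma_T + \langle\boldsymbol{\alpha}|\boldsymbol{\rho}\rangle > 1$ keeps the denominator $a_0 s_T + \langle\boldsymbol{\alpha}|\mathbf{z}\rangle - 1$ away from zero, and Proposition \ref{thm_dC_compl} guarantees that $F(\mathbf{s};\mathbf{z})$ is entire and of moderate growth in $(\mathbf{s},\mathbf{z})$; together with the exponential decay of the Gamma factors on the vertical lines (Stirling), this yields the absolute convergence of the iterated integral locally uniformly in $\mathbf{s}\in\mathcal{D}_{\boldsymbol{\rho}}(\mathbf{P})$, hence holomorphy by Morera's theorem.

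The main obstacle is the moderate growth/Stirling bookkeeping needed to justify both the Fubini swap and the locally uniform convergence of the Mellin--Barnes integral on all of $\mathcal{D}_{\boldsymbol{\rho}}(\mathbf{P})$: one must verify that the polynomial growth of $F$ on the product of vertical lines (provided by Proposition \ref{thm_dC_compl}) is comfortably dominated by the exponential decay $\exp(-\tfrac{\pi}{2}\sum_j |\Im z_j|)$ coming from $\prod_j \Gamma(z_j)$ and $\Gamma(s_T-|\mathbf{z}|)$. This is standard once the estimates are set up, and is precisely the reason why Proposition \ref{thm_dC_compl} was established beforehand.
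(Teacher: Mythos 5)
Your overall strategy (multiple Mellin--Barnes, series--integral interchange, then holomorphy of the right-hand side plus analytic continuation) is the same as the paper's, but there is a genuine gap in the interchange step. You apply Proposition \ref{MMB_formula} directly on the prescribed lines $(\rho_1),\ldots,(\rho_d)$ and assert that, for $\Re s_t$ large enough, the resulting de Crisenoy series converges absolutely there. This cannot be arranged in general: the $z_j$-arguments of $\zeta_{n-1}^{\dc}(s_1,\ldots,s_{T-1},s_T-|\mathbf{z}|,\mathbf{z};\mathbf{P(Q)};\boldsymbol{\mu}_{n-1})$ have real parts exactly $\rho_j$, which the proposition allows to be arbitrarily small positive numbers and which no choice of $\mathbf{s}$ can enlarge. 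Concretely, take $T=1$, $n=2$, $Q_0\equiv 1$, $Q_1=X_1$ (both satisfy HDF and \eqref{eq:condition_polynomials_tends_infinity}); the inner sum over $m_1$ after the interchange is $\sum_{m_1}\mu_1^{m_1}m_1^{-z_1}$, and for $\rho_1=\tfrac12$ the sum of absolute values $\sum_{m_1}m_1^{-1/2}$ diverges no matter how large $\Re s_T$ is. So Fubini is not available on the lines $(\rho_j)$ themselves, and your derivation of \eqref{eq:integral_representation_zeta_n} at those abscissas is not justified.

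The missing ingredient, which is how the paper proceeds, is an intermediate contour shift: first carry out the Mellin--Barnes decomposition and the series--integral inversion on auxiliary lines $(\rho_1'),\ldots,(\rho_d')$ with $\rho_j'>\rho_j$ chosen large enough (together with $\Re s_t$ large) that all the relevant Dirichlet series converge absolutely, so the interchange is legitimate and \eqref{eq:integral_representation_rho_prime} holds. Then, using that $F$ is entire of moderate growth (Proposition \ref{thm_dC_compl}) and that neither $\Gamma(z_j)$ nor the denominator $a_0s_T+\langle\boldsymbol{\alpha}|\mathbf{z}\rangle-1$ produces singularities in the strips $\rho_j\leq\Re z_j\leq\rho_j'$ for $\mathbf{s}$ in the relevant domain, shift the contours down to $(\rho_j)$ by the residue theorem without picking up residues; finally extend to all of $\mathcal{D}_{\boldsymbol{\rho}}(\mathbf{P})$ by analytic continuation, exactly as in your last paragraph. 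Your final paragraph (holomorphy of the right-hand side on $\mathcal{D}_{\boldsymbol{\rho}}(\mathbf{P})$ via Stirling decay and moderate growth of $F$) is correct and matches the paper; it is only the justification of the identity at general $\boldsymbol{\rho}$ that needs the extra shifting step.
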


\begin{proof}
Recall that by Proposition \ref{conv_domain}, $\zeta_n(\mathbf{s}, \mathbf{P}, \mu_{n-1})$ absolutely converges in the domain $\Re s_1,\ldots,\Re s_{T-1}> 1/b$ and $\Re s_T> 1/a$. In order to prove the integral expression formula \eqref{eq:integral_representation_zeta_n}, we first consider $\rho'_1>\rho_1,\ldots,\rho'_d>\rho_d$ sufficiently large, and $\Re s_1,\ldots,\Re s_T>\max ( \frac{1}{a}, \frac{1}{b})$ sufficiently large so that the Dirichlet series
$$ \sum_{m_1,\ldots,m_{n-1} \geq 1} \frac{\mu_1^{m_1}\cdots \mu_{n-1}^{m_{n-1}}}{Q_0(\mathbf{m})^{s_T-|\mathbf{z}|}\prod_{j=1}^{T-1}P_j(\mathbf{m})^{s_j}\prod_{i=1}^d Q_i(\mathbf{m})^{z_i}} $$
absolutely converges for all $z_1 \in (\rho'_1),\ldots, z_d \in (\rho'_d)$. By the multiple Mellin-Barnes formula
(Proposition \ref{MMB_formula}), we obtain
\begin{align*}
    \zeta_{n}(&\mathbf{s};\mathbf{P};\boldsymbol{\mu}_{n-1}) \\
    =& \sum_{m_1,\ldots,m_n \geq 1} \frac{\mu_1^{m_1} \cdots \mu_{n-1}^{m_{n-1}}}{\left(Q_0(\mathbf{m})m_n^{a_0}\left(1+\sum_{j=1}^d \frac{Q_j(\mathbf{m})}{Q_0(\mathbf{m})}m_n^{a_j-a_0}\right)\right)^{s_T} \prod_{i=1}^{T-1} P_i(\mathbf{m})^{s_i}} \\
    =& \frac{1}{(2\pi \ii)^d}\sum_{m_1,\ldots,m_n \geq 1} \int_{(\rho'_1)} \cdots \int_{(\rho'_d)} \frac{\mu_1^{m_1}\cdots \mu_{n-1}^{m_{n-1}}}{Q_0(\mathbf{m})^{s_T-|\mathbf{z}|} \left(\prod_{i=1}^{T-1} P_i(\mathbf{m})^{s_i}\right) \prod_{j=1}^d Q_j(\mathbf{m})^{z_j}} \\
    & \hspace{150pt} \times \frac{1}{m_n^{a_0s_T+\langle \boldsymbol{\alpha}|\mathbf{z} \rangle}} \times \frac{\Gamma(s_T-|\mathbf{z}|)\prod_{j=1}^d \Gamma(z_j) \, \dd z_j}{\Gamma(s_T)}.
\end{align*}
By our assumption on $\mathbf{s}$ and $\boldsymbol{\rho}'$, we can perform a series-integral inversion in order to obtain the formula \eqref{eq:integral_representation_zeta_n} (with $\boldsymbol{\rho}'$ instead of $\boldsymbol{\rho}$). We then have
\begin{equation}
    \zeta_{n}(\mathbf{s};\mathbf{P};\boldsymbol{\mu}_{n-1}) = \frac{1}{(2 \pi \ii)^d} \int_{(\rho'_1)} \cdots \int_{(\rho'_d)} \frac{\Gamma(s_T-|\mathbf{z}|)F(\mathbf{s};\mathbf{z}) \prod_{j=1}^d \Gamma(z_j) \, \dd z_j }{\Gamma(s_T)(a_0s_T+\langle \boldsymbol{\alpha}|\mathbf{z} \rangle-1)}. \label{eq:integral_representation_rho_prime}
\end{equation}
We know by Proposition \ref{thm_dC_compl} that $F$ is an entire function of moderate growth, thus it follows that the right-hand side of \eqref{eq:integral_representation_zeta_n} is holomorphic on the domain $\mathcal{D}_{\boldsymbol{\rho}'}(\mathbf{P})$. Therefore, \eqref{eq:integral_representation_rho_prime} gives an analytic continuation of $\zeta_{n}(\mathbf{s};\mathbf{P};\boldsymbol{\mu}_{n-1})$ to the domain $\mathcal{D}_{\boldsymbol{\rho}'}(\mathbf{P})$.

It remains to perform a shift to the left, from $(\rho'_1) \text{ to } (\rho_1), \ldots, (\rho'_d) \text{ to } (\rho_d)$ in \eqref{eq:integral_representation_rho_prime}. Let $\mathbf{s} \in \mathcal{D}_{\boldsymbol{\rho}'}(\mathbf{P}) \subset \mathcal{D}_{\boldsymbol{\rho}}(\mathbf{P})$. In the integrand of \eqref{eq:integral_representation_rho_prime}, there is no singularity in the variables $z_1,\ldots,z_d$ when $\rho_1 \leq \Re z_1 \leq \rho'_1, \ldots, \rho_d \leq \Re z_d \leq \rho'_d$. Therefore, shifting \eqref{eq:integral_representation_rho_prime} to the left with respect to $z_1,\ldots,z_d$ introduces no singularities, and applying the residue theorem proves that \eqref{eq:integral_representation_zeta_n} holds for all $\mathbf{s} \in \mathcal{D}_{\boldsymbol{\rho}'}(\mathbf{P})$. We can then extend this result for all $\mathbf{s} \in \mathcal{D}_{\boldsymbol{\rho}}(\mathbf{P})$ by analytic continuation.
\end{proof}

It is clear that the domain $\mathcal{D}_{\boldsymbol{\rho}}(\mathbf{P})$ does not contain any non-positive integer points $-\mathbf{N} \in \mathbb{Z}_{\leq 0}^n$, which is why we will perform many shifts to the left in the integral at the right-hand side of \eqref{eq:integral_representation_zeta_n} in order to obtain a meromorphic continuation to the whole complex space. Such a shift is possible because the gamma-factor has exponential decay along vertical strips, and as mentioned in the proof above, the function $F(\mathbf{s};\mathbf{z})$ also has moderate growth. For some technical reasons, we choose to perform the first shift to the left with respect to the variable $z_d$. Note that two types of poles will contribute in the residue theorem, one coming from 
$\displaystyle{\frac{1}{a_0s_T+\langle \boldsymbol{\alpha}|\mathbf{z} \rangle-1}}$, and several poles coming from $\Gamma(z_d)$. 

If we iterate to apply the residue theorem, we then see some auxiliary integrals appearing in our computations. At each step, we expect to have two different types of poles to deal with, both of which look like the ones we described in the previous paragraph. Fortunately, by assuming some conditions on the real part of $s_T$, we will have one fewer pole contributing to the residue theorem.

\subsection{Auxiliary integrals}
\label{subsection:recursive_structure}

In the rest of this section, we assume the following:
\begin{assu}
\label{assumption:delta_epsilon_rho}
Let $\delta>\varepsilon>0$ be real numbers such that
\begin{equation}
    \delta a_0<1-2\varepsilon<1+(\alpha_d+1)\varepsilon<\delta a_1. \label{eq:ineq_delta_a0_delta_a1}
\end{equation}
Let $\rho_1,\ldots,\rho_d>0$ be positive numbers such that
\begin{align}
    &\alpha_1\rho_1+\cdots +\alpha_{d-1} \rho_{d-1}<\varepsilon \label{eq:ineq_alpha_rho_epsilon}\\
    &(\alpha_d-\alpha_1)\rho_1+\cdots +(\alpha_d-\alpha_{d-1}) \rho_{d-1}<\varepsilon \label{eq:ineq_alpha_rho_epsilon_bis}\\
    &\alpha_1\rho_1+\cdots +\alpha_d\rho_d+a_0\delta>1 \label{eq:ineq_delta_sum_alpha_rho} \\
    &\delta>\rho_1+\cdots +\rho_d. \label{eq:ineq_delta_sum_rho}
\end{align}
\end{assu}

\begin{example}
We can take the following explicit $\delta, \varepsilon, \rho_1,\ldots,\rho_d$ that verify the previous assumption:
\begin{enumerate}[label=(\roman*)]
    \item If $a_0>0$, then we consider
    $$\delta:=\frac{1}{2a_0}+\frac{1}{2a_1}, \quad \varepsilon \leq \frac{1}{8a_0(a_d+1)}. $$
    \item If $a_0=0$, then we consider
    $$ \delta:=\frac{3}{2}, \quad \varepsilon \leq \frac{1}{4(a_d+1)}. $$
    \item For the positive numbers $\rho_1,\ldots,\rho_d$, we consider 
    $$ \rho_d = \frac{1-a_0\delta+\varepsilon}{\alpha_d}, \qquad \rho_j \leq \frac{\varepsilon}{d\alpha_d} \quad (1 \leq j \leq d-1). $$
\end{enumerate}
\end{example}

In this subsection, we aim to use iteratively the residue theorem on the integral at the right-hand side of \eqref{eq:integral_representation_zeta_n}, with respect to the last variable of $\mathbf{z}$ at every step. By doing so, we will encounter auxiliary integrals that look like the one in \eqref{eq:integral_representation_zeta_n}. For the sake of simplicity, we now assume for this subsection that $\mathbf{s} \in \mathcal{H}_T(\delta,\delta+\frac{\varepsilon}{a_0})$, i.e. 
$$ \delta < \Re s_T < \delta+\frac{\varepsilon}{a_0}. $$
By assuming this condition on $\mathbf{s}$, we will avoid dealing with undesirable singularities when applying the residue theorem to the two types of auxiliary integrals defined below.

\begin{definition}
Let $\mathbf{k}=(k_{r+1},\ldots,k_d) \in \mathbb{N}_0^{d-r}$ be a tuple where $0 \leq r \leq d-1$. For all $\mathbf{s} \in \mathcal{H}_T(\delta,\delta+\frac{\varepsilon}{a_0})$, we set
\begin{align}
    I^{(r)}_{\mathbf{k}}(\mathbf{s}):=&\frac{1}{(2\pi \ii)^r} \int_{(\rho_1)} \cdots \int_{(\rho_r)} \frac{\Gamma(s_T+|\mathbf{k}|-|\mathbf{z}|) F(\mathbf{s};\mathbf{z},-\mathbf{k}) \prod_{j=1}^r\Gamma(z_j) \, \dd z_j}{\Gamma(s_T)(a_0s_T+\langle \boldsymbol{\alpha}|(\mathbf{z},-\mathbf{k})\rangle-1)}, \label{eq:def_I_r} \\
    I^{(0)}_{\mathbf{k}}(\mathbf{s}):=& \frac{F(\mathbf{s};-\mathbf{k})\Gamma(s_T+|\mathbf{k}|)}{\Gamma(s_T)(a_0s_T-\langle \boldsymbol{\alpha}|\mathbf{k} \rangle-1)}. \nonumber
\end{align}
\end{definition}

\begin{definition}
Let $\boldsymbol{\ell}=(\ell_{r+1},\ldots,\ell_{d-1}) \in \mathbb{N}_0^{d-r-1}$ be a tuple where $0 \leq r \leq d-1$. For all $\mathbf{s} \in \mathcal{H}_T(\delta,\delta+\frac{\varepsilon}{a_0})$ we set
\begin{align}
    J^{(r)}_{\boldsymbol{\ell}}(\mathbf{s}):=& \frac{1}{(2\pi \ii)^r}\int_{(\rho_1)} \cdots \int_{(\rho_r)} \frac{\Gamma(\widetilde{g}(s_T;\mathbf{z},-\boldsymbol{\ell}))\Gamma(g(s_T;\mathbf{z},-\boldsymbol{\ell}))}{\Gamma(s_T)} \label{eq:def_J_r} \\
    & \hspace{139pt} \times F(\mathbf{s};\mathbf{z},-\boldsymbol{\ell},g(s_T;\mathbf{z},-\boldsymbol{\ell})) \prod_{j=1}^r \Gamma(z_j) \, \dd z_j ,\nonumber \\
    J^{(0)}_{\boldsymbol{\ell}}(\mathbf{s}):=& \frac{F(\mathbf{s};-\boldsymbol{\ell},g(s_T;-\boldsymbol{\ell})) \Gamma(\widetilde{g}(s_T;-\boldsymbol{\ell}))\Gamma(g(s_T;-\boldsymbol{\ell}))}{\Gamma(s_T)} \nonumber
\end{align}
where we recall that $g$ and $\widetilde{g}$ are affine linear forms defined in Theorem \ref{th:analytic_continuation_zeta_general_case}.
\end{definition}

Observe that the integrals $J^{(r)}_{\boldsymbol{\ell}}$ and $I^{(r)}_{\mathbf{k}}$ are all well-defined thanks to the following useful lemma:
\begin{lemm}
\label{lemma:ineq_l_tilde_l_delta_epsilon}
Let $\boldsymbol{\ell}=(\ell_{r+1},\ldots,\ell_{d-1}) \in \mathbb{N}_0^{d-r-1}$ be a tuple, with $0 \leq r \leq d-1$. Then for all $\mathbf{s} \in \mathcal{H}_T(\delta,\delta+\frac{\varepsilon}{a_0})$ and all $x \in (-\infty,\rho_r]$, we have
\begin{align}
    &g(\sigma_T;\rho_1,\ldots,\rho_{r-1},x,-\boldsymbol{\ell})>0 \label{eq:ineq_l_k} \\
    &\widetilde{g}(\sigma_T;\rho_1,\ldots,\rho_{r-1},x,-\boldsymbol{\ell})>0 \label{eq:ineq_tilde_l_k} \\
    &a_0\sigma_T+\sum_{j=1}^{r-1} \alpha_j \rho_j+\alpha_r x-1 <0. \label{eq:ineq_delta_epsilon}
\end{align}
\end{lemm}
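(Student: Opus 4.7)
The plan is a direct verification: each of the three inequalities is, after clearing the common positive denominator $\alpha_d$, a linear inequality in the variables $\sigma_T$, $x$, and the $\rho_j$, $\ell_j$, so the proof reduces to a bookkeeping exercise combining the monotonicity hypotheses with the numerical constraints (\ref{eq:ineq_delta_a0_delta_a1})--(\ref{eq:ineq_delta_sum_rho}) of Assumption \ref{assumption:delta_epsilon_rho}. The main strategy is, for each inequality, to replace every ``free'' variable by its worst case (which lies at an endpoint of the allowed range) and then apply exactly one of the numerical hypotheses to close the gap.

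For \eqref{eq:ineq_l_k}, I would unfold $\alpha_d \, g(\sigma_T;\rho_1,\ldots,\rho_{r-1},x,-\boldsymbol{\ell}) = 1-a_0\sigma_T-\sum_{j=1}^{r-1}\alpha_j\rho_j-\alpha_r x+\sum_{j=r+1}^{d-1}\alpha_j\ell_j$. Dropping the non-negative tail $\sum_{j=r+1}^{d-1}\alpha_j\ell_j$ (since $\alpha_j,\ell_j\geq 0$), replacing $x$ by $\rho_r$ (since $\alpha_r>0$ and $x\leq\rho_r$), and invoking $a_0\sigma_T < a_0\delta+\varepsilon$ (from $\sigma_T<\delta+\varepsilon/a_0$) together with $\sum_{j=1}^{r}\alpha_j\rho_j<\varepsilon$ from \eqref{eq:ineq_alpha_rho_epsilon}, one finds the lower bound $1-a_0\delta-2\varepsilon$, which is strictly positive by the left half of \eqref{eq:ineq_delta_a0_delta_a1}. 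The case $a_0=0$ is slightly easier: $a_0\sigma_T$ simply vanishes.

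For \eqref{eq:ineq_tilde_l_k}, I would apply the same three moves to $\alpha_d\,\widetilde{g}(\sigma_T;\rho_1,\ldots,\rho_{r-1},x,-\boldsymbol{\ell}) = a_d\sigma_T-\sum_{j=1}^{r-1}(\alpha_d-\alpha_j)\rho_j-(\alpha_d-\alpha_r)x+\sum_{j=r+1}^{d-1}(\alpha_d-\alpha_j)\ell_j-1$: drop the non-negative tail (here using $\alpha_d-\alpha_j>0$ for $j<d$), replace $x$ by $\rho_r$, and use $\sigma_T>\delta$ and \eqref{eq:ineq_alpha_rho_epsilon_bis} to bound below by $a_d\delta-\varepsilon-1$. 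Since $a_d\geq a_1$, the right half of \eqref{eq:ineq_delta_a0_delta_a1} gives $a_d\delta\geq a_1\delta>1+(\alpha_d+1)\varepsilon>1+\varepsilon$, so the bound is strictly positive.

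For \eqref{eq:ineq_delta_epsilon}, only an upper bound is needed. Using $x\leq\rho_r$, $\sigma_T<\delta+\varepsilon/a_0$ (or $a_0=0$), and $\sum_{j=1}^r\alpha_j\rho_j<\varepsilon$ from \eqref{eq:ineq_alpha_rho_epsilon}, one bounds the expression above by $a_0\delta+2\varepsilon-1<0$, again by the left half of \eqref{eq:ineq_delta_a0_delta_a1}. There is no genuine obstacle in this proof; the only care needed is to use each hypothesis at the right place and to observe that hypotheses (\ref{eq:ineq_delta_sum_alpha_rho}) and (\ref{eq:ineq_delta_sum_rho})—which ensure convergence of the original integrals but are not required here—do not enter the argument.
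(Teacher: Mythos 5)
Your proof is correct and follows essentially the same route as the paper's: reduce to the worst case $x=\rho_r$ by monotonicity, drop the non-negative $\boldsymbol{\ell}$-terms, and close each inequality with the appropriate bound from \eqref{eq:ineq_delta_a0_delta_a1}, \eqref{eq:ineq_alpha_rho_epsilon} or \eqref{eq:ineq_alpha_rho_epsilon_bis}, arriving at the same lower bounds $1-a_0\delta-2\varepsilon$ and $a_d\delta-\varepsilon-1$. Your observation that \eqref{eq:ineq_delta_sum_alpha_rho} and \eqref{eq:ineq_delta_sum_rho} play no role here is also accurate.
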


\begin{proof}
Observe that both functions
\begin{align*}
    &x \in \mathbb{R} \mapsto g(\sigma_T;\rho_1,\ldots,\rho_{r-1},x,-\boldsymbol{\ell}) \\
    &x \in \mathbb{R} \mapsto \widetilde{g}(\sigma_T;\rho_1,\ldots,\rho_{r-1},x,-\boldsymbol{\ell})
\end{align*}
are non-increasing, therefore it is enough to prove the two inequalities \eqref{eq:ineq_l_k} and \eqref{eq:ineq_tilde_l_k} when $x=\rho_r$. For a similar reason, it suffices to show that \eqref{eq:ineq_delta_epsilon} holds when $x=\rho_r$.

By \eqref{eq:ineq_delta_a0_delta_a1}, we have that $a_0 \delta<1-2\varepsilon$. By \eqref{eq:ineq_alpha_rho_epsilon} we have $\alpha_1\rho_1+\cdots +\alpha_r\rho_r \leq \alpha_1\rho_1+\cdots +\alpha_{d-1}\rho_{d-1}<\varepsilon$. Thus, we obtain
\begin{align*}
g(\sigma_T;\rho_1,\ldots,\rho_{r},-\boldsymbol{\ell}) &> 
\frac{-a_0(\delta+\frac{\varepsilon}{a_0})-\varepsilon+
\sum_{j=r+1}^d\alpha_j \ell_j+1}{\alpha_d}\\
&>
\frac{-a_0\delta-2\varepsilon+1}{\alpha_d}>0, 
\end{align*}
which proves \eqref{eq:ineq_l_k}.

By \eqref{eq:ineq_delta_a0_delta_a1}, we have $a_d \delta \geq a_1\delta>1+\varepsilon$. By \eqref{eq:ineq_alpha_rho_epsilon_bis}, we have $(\alpha_d-\alpha_1)\rho_1+\cdots +(\alpha_d-\alpha_r) \rho_{r} \leq (\alpha_d-\alpha_1)\rho_1+\cdots +(\alpha_d-\alpha_{d-1}) \rho_{d-1}<\varepsilon$. Thus, we obtain
$$ \widetilde{g}(\sigma_T;\rho_1,\ldots,\rho_{r},-\boldsymbol{\ell})
>\frac{a_d\delta-\varepsilon+\sum_{j=r+1}^d \alpha_j\ell_j
-1}{\alpha_d} 
>\frac{a_d\delta-\varepsilon-1}{\alpha_d}>0, $$
which proves \eqref{eq:ineq_tilde_l_k}.

By \eqref{eq:ineq_delta_a0_delta_a1}, we have $a_0\sigma_T<a_0\delta+\varepsilon<1-\varepsilon$. By \eqref{eq:ineq_alpha_rho_epsilon}, we have $\alpha_1\rho_1+\cdots +\alpha_r\rho_r<\varepsilon$. Thus we find
$$ a_0\sigma_T+\sum_{j=1}^{r} \alpha_j \rho_j-1 <0. $$
\end{proof}

\subsubsection{A first shift to the left}

\begin{prop}
\label{prop:formula_zeta_n_shift_left}
Let $M_d \in \mathbb{N}_0, \ \eta_d \in (0,1)$. For all $\mathbf{s} \in \mathcal{H}_{T}(\delta,\delta+\frac{\varepsilon}{a_0})$, we have
\begin{align*}
    \zeta_{n}(&\mathbf{s};\mathbf{P};\boldsymbol{\mu}_{n-1})=\frac{1}{\alpha_d} J^{(d-1)}_{\emptyset}(\mathbf{s}) + \sum_{k_d=0}^{M_d} \frac{(-1)^{k_d}}{k_d!} I^{(d-1)}_{k_d}(\mathbf{s}) \\
    & + \frac{1}{(2\pi \ii)^d\Gamma(s_T)} \int_{(\rho_1)} \cdots \int_{(\rho_{d-1})}\int_{(-M_{d}-\eta_{d})} \frac{F(\mathbf{s};\mathbf{z})\Gamma(s_T-|\mathbf{z}|) \prod_{j=1}^d \Gamma(z_j) \, \dd z_j}{a_0s_T+\langle \boldsymbol{\alpha}|\mathbf{z}\rangle -1}.
\end{align*}
Moreover, the last integral defines a holomorphic function on $\mathcal{H}_T(\varepsilon-M_d-\eta_d,\frac{1-\varepsilon+M_d+\eta_d}{a_0})$.
\end{prop}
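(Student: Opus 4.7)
The plan is to start from the integral representation \eqref{eq:integral_representation_zeta_n}, which applies because $\mathcal{H}_T(\delta,\delta+\varepsilon/a_0) \subset \mathcal{D}_{\boldsymbol{\rho}}(\mathbf{P})$ thanks to \eqref{eq:ineq_delta_sum_rho} and \eqref{eq:ineq_delta_sum_alpha_rho}, and then to shift the innermost contour $(\rho_d)$ of the variable $z_d$ to $(-M_d-\eta_d)$, keeping $z_1,\ldots,z_{d-1}$ fixed on $(\rho_1),\ldots,(\rho_{d-1})$. In the vertical strip $-M_d-\eta_d < \Re z_d < \rho_d$, the integrand, viewed as a function of $z_d$, has only simple poles of two kinds: the poles of $\Gamma(z_d)$ at $z_d=0,-1,\ldots,-M_d$, and the zero of $a_0 s_T+\langle \boldsymbol{\alpha}|\mathbf{z}\rangle-1$ located at $z_d = g(s_T;z_1,\ldots,z_{d-1})$. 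By Lemma \ref{lemma:ineq_l_tilde_l_delta_epsilon} applied with $r=d-1$ and $\boldsymbol{\ell}=\emptyset$, we get $0 < \Re g$ uniformly on $(\rho_1)\times\cdots\times(\rho_{d-1})$, while \eqref{eq:ineq_delta_sum_alpha_rho} gives $\Re g < \rho_d$, so this pole lies strictly between the two contours. The factor $\Gamma(s_T-|\mathbf{z}|)$ contributes no further poles, since its would-be poles occur at $\Re z_d = \sigma_T-(\rho_1+\cdots+\rho_{d-1})+m \geq \delta-(\rho_1+\cdots+\rho_{d-1}) > \rho_d$ by \eqref{eq:ineq_delta_sum_rho}, i.e., to the right of the strip.

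Next I would apply the residue theorem. The residue at $z_d=-k_d$ carries the factor $(-1)^{k_d}/k_d!$ from $\Gamma(z_d)$, and integration over the remaining $d-1$ variables yields $\frac{(-1)^{k_d}}{k_d!} I^{(d-1)}_{k_d}(\mathbf{s})$ by direct comparison with \eqref{eq:def_I_r}. The residue at $z_d=g$ introduces the factor $1/\alpha_d$ coming from differentiating the linear denominator in $z_d$; using the identity $s_T-|\mathbf{z}|=\widetilde{g}(s_T;z_1,\ldots,z_{d-1})$ evaluated at $z_d=g$, the remaining integral matches exactly $\frac{1}{\alpha_d} J^{(d-1)}_{\emptyset}(\mathbf{s})$ via \eqref{eq:def_J_r}. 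What remains on the shifted contour $(-M_d-\eta_d)$ is the stated remainder. To justify the shift itself, Stirling's formula applied to $\Gamma(z_d)\Gamma(s_T-|\mathbf{z}|)$ gives exponential decay of order $e^{-\pi|\Im z_d|}$ as $|\Im z_d|\to\infty$, which dominates the at most polynomial growth of $F$ provided by Proposition \ref{thm_dC_compl}, so the horizontal boundary contributions in the rectangular contour vanish.

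For the holomorphy of the remainder on $\mathcal{H}_T(\varepsilon-M_d-\eta_d,(1-\varepsilon+M_d+\eta_d)/a_0)$, I would verify on the translated contour that: (i) $\Re(s_T-|\mathbf{z}|)=\sigma_T+M_d+\eta_d-(\rho_1+\cdots+\rho_{d-1})>0$, using $\rho_1+\cdots+\rho_{d-1}<\varepsilon$ (which follows from \eqref{eq:ineq_alpha_rho_epsilon} together with $\alpha_j\geq 1$), so $\Gamma(s_T-|\mathbf{z}|)$ has no pole on the contour; (ii) $\Re(a_0 s_T+\langle\boldsymbol{\alpha}|\mathbf{z}\rangle-1) = a_0\sigma_T+\sum_{j<d}\alpha_j\rho_j-\alpha_d(M_d+\eta_d)-1 < 0$ using the upper bound on $\sigma_T$, the relation $\alpha_d\geq 1$, and the same $\varepsilon$-bound, so the linear denominator never vanishes; (iii) the exponential decay of the full integrand in each $|\Im z_j|$ provided by Stirling, combined with the polynomial growth of $F$, gives absolute convergence that is uniform on compact subsets of the domain, whence holomorphy follows by Morera's theorem. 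The main technical task is the careful bookkeeping of the inequalities in Assumption \ref{assumption:delta_epsilon_rho} to ensure simultaneously that $z_d=g$ sits strictly inside the vertical strip during the shift and that the estimates in (i) and (ii) hold throughout the stated domain; there is no deeper conceptual obstacle.
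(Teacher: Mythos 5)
Your proposal is correct and follows essentially the same route as the paper: starting from the integral representation \eqref{eq:integral_representation_zeta_n}, shifting the $z_d$-contour from $(\rho_d)$ to $(-M_d-\eta_d)$, using Lemma \ref{lemma:ineq_l_tilde_l_delta_epsilon} (with $r=d-1$, $\boldsymbol{\ell}=\emptyset$) to separate the pole at $z_d=g(s_T;z_1,\ldots,z_{d-1})$ from those of $\Gamma(z_d)$, and identifying the residues with $\frac{1}{\alpha_d}J^{(d-1)}_{\emptyset}$ and $\frac{(-1)^{k_d}}{k_d!}I^{(d-1)}_{k_d}$. Your additional explicit checks (that $\Re g<\rho_d$, that $\Gamma(s_T-|\mathbf{z}|)$ contributes no poles in the strip, and the inequality bookkeeping for the holomorphy domain of the remainder) are all consistent with, and slightly more detailed than, the paper's argument.
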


\begin{proof}
First observe that by \eqref{eq:ineq_delta_sum_alpha_rho} and \eqref{eq:ineq_delta_sum_rho}, the set $\mathcal{H}_{T}(\delta,\delta+\frac{\varepsilon}{a_0})$ belongs to the domain $\mathcal{D}_{\boldsymbol{\rho}}(\mathbf{P})$, so we can use the integral expression \eqref{eq:integral_representation_zeta_n} of $\zeta_{n}(\mathbf{s};\mathbf{P};\boldsymbol{\mu}_{n-1})$. By shifting the path of integration from $(\rho_d)$ to $(-M_d-\eta_d)$ with respect to $z_d$ in \eqref{eq:integral_representation_zeta_n}, we encounter two different types of singularities:
\begin{align*}
    & z_d=-k_d && \text{ from } \Gamma(z_d) \qquad (0 \leq k_d \leq M_d), \\
    & z_d=\frac{1-a_0s_T-\alpha_1z_1-\cdots -\alpha_{d-1}z_{d-1}}{\alpha_d} && \text{ from } \frac{1}{a_0s_T+\sum_{j=1}^d \alpha_j z_j -1}.
\end{align*}
Note that by setting $\mathbf{z}=(z_1,\ldots,z_{d-1}) \in (\rho_1) \times \cdots \times (\rho_{d-1})$, this last singularity verifies $z_d=g(s_T;\mathbf{z})$. By Lemma \ref{lemma:ineq_l_tilde_l_delta_epsilon}, we have $\Re(g(s_T;\mathbf{z}))>0$. In particular, we see that the two types of singularities as described above do not overlap each other, meaning that $g(s_T;\mathbf{z}) \not \in \mathbb{Z}_{\leq 0}$ for all $\mathbf{z}=(z_1,\ldots,z_{d-1}) \in (\rho_1) \times \cdots \times (\rho_{d-1})$. By the residue theorem, we obtain
\begin{align*}
     &\zeta_{n}(\mathbf{s};\mathbf{P};\boldsymbol{\mu}_{n-1}) =\frac{1}{(2\pi \ii)^{d-1}\alpha_d}\int_{(\rho_{1})} \cdots \int_{(\rho_{d-1})} \frac{\Gamma(\widetilde{g}(s_T;\mathbf{z})) \Gamma(g(s_T;\mathbf{z}))}{\Gamma(s_T)} \\
    & \hspace{232pt} \times F(\mathbf{s};\mathbf{z},g(s_T;\mathbf{z})) \prod_{j=1}^{d-1} \Gamma(z_j) \, \dd z_{j} \\
    &+ \sum_{k_d=0}^{M_d}\frac{(-1)^{k_d}}{(2\pi \ii)^{d-1} k_d!} \int_{(\rho_1)} \cdots \int_{(\rho_{d-1})} \frac{\Gamma(s_T+k_d-|\mathbf{z}|)F(\mathbf{s};\mathbf{z},-k_d)\prod_{j=1}^{d-1} \Gamma(z_j) \, \dd z_{j}}{\Gamma(s_T) (a_0s_T\langle \boldsymbol{\alpha}|(\mathbf{z},-k_d) \rangle-1)} \\
    & + \frac{1}{(2\pi \ii)^d\Gamma(s_T)} \int_{(\rho_{1})} \cdots \int_{(\rho_{d-1})}\int_{(-M_d-\eta_d)} \frac{\Gamma(s_T-|\mathbf{z}|)F(\mathbf{s};\mathbf{z})\prod_{j=1}^d \Gamma(z_j) \, \dd z_j}{a_0s_T+\langle \boldsymbol{\alpha}|\mathbf{z} \rangle -1}.
\end{align*}
The first integral corresponds to $\frac{1}{\alpha_d} J^{(d-1)}_{\emptyset}(\mathbf{s})$. The integrals inside the sum correspond to $\frac{(-1)^{k_d}}{k_d!}I^{(d-1)}_{k_d}(\mathbf{s}) \ (0 \leq k_d \leq M_d)$. The last integral defines a holomorphic function on $\mathcal{H}_T(\varepsilon-M_d-\eta_d,\frac{1-\varepsilon+M_d+\eta_d}{a_0})$ because the function
$$ (s_T, \mathbf{z}) \mapsto \frac{1}{a_0s_T+\langle \boldsymbol{\alpha}|\mathbf{z} \rangle -1} $$
is holomorphic of moderate growth in the domain $(a_0\sigma_T+\sum_{j=1}^d \alpha_j \Re z_j<1)$, and the function 
$$(\mathbf{s},\mathbf{z}) \mapsto \Gamma\left(s_T-\sum_{j=1}^d z_j\right)$$ 
is also holomorphic of exponential decay in the domain $(\sigma_T>\sum_{j=1}^d \Re z_j)$.
\end{proof}

\subsubsection{Recursive formulas for the auxiliary integrals}

The two integrals $I^{(r)}_{\mathbf{k}}$ and $J^{(r)}_{\boldsymbol{\ell}}$ have some recursive structure, providing we assume some conditions on the complex number $s_T$, and the positive real numbers $\rho_1,\ldots,\rho_d$. For the two following propositions, we fix an integer $r \in [\![1,d-1]\!]$, a tuple $\mathbf{k}=(k_{r+1},\ldots,k_d) \in \mathbb{N}_0^{d-r}$ and a tuple $\boldsymbol{\ell}=(\ell_{r+1},\ldots,\ell_{d-1}) \in \mathbb{N}_0^{d-r-1}$.

\begin{prop}
\label{prop:iterative_structure_I_r}
Let $M_r \in \mathbb{N}_0$, $0<\eta_r<1$, and $\mathbf{s} \in \mathcal{H}_T(\delta,\delta+\frac{\varepsilon}{a_0})$. Then we have
\begin{align*}
    &I^{(r)}_{\mathbf{k}}(\mathbf{s})=\sum_{k_r=0}^{M_r}\frac{(-1)^{k_r}}{k_r!}I^{(r-1)}_{(k_r,\ldots,k_d)}(\mathbf{s}) \\
    &+\frac{1}{(2\pi \ii)^r\Gamma(s_T)} \int_{(\rho_{1})} \cdots \int_{(\rho_{r-1})} \int_{(-M_r-\eta_r)} \frac{\Gamma(s_T+|\mathbf{k}|-|\mathbf{z}|)F(\mathbf{s};\mathbf{z},-\mathbf{k}) \prod_{j=1}^r \Gamma(z_j) \, \dd z_j}{a_0s_T+\langle \boldsymbol{\alpha}|\mathbf{z},-\mathbf{k} \rangle -1}.
\end{align*}
Moreover, the last integral defines a holomorphic function on the domain
$$ \mathcal{H}_T\left(\varepsilon-M_r-\eta_r,\frac{1-\varepsilon+M_r+\eta_r}{a_0}\right). $$
\end{prop}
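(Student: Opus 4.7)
The plan is to iterate the contour-shift argument used in the proof of Proposition~\ref{prop:formula_zeta_n_shift_left}, but now with the last $d-r$ coordinates of the integration variable already frozen at $-\mathbf{k}$. Fix $\mathbf{s}\in\mathcal{H}_T(\delta,\delta+\varepsilon/a_0)$. Regarded as a function of $z_r$ alone (with $z_1,\ldots,z_{r-1}$ held on their contours), the integrand in \eqref{eq:def_I_r} has at most two families of singularities inside the strip $-M_r-\eta_r<\Re z_r<\rho_r$: the simple poles $z_r=-k_r$ for $0\leq k_r\leq M_r$ coming from $\Gamma(z_r)$, and the simple zero of the denominator at
\[
z_r\;=\;\frac{1+\sum_{j=r+1}^{d}\alpha_j k_j-a_0 s_T-\sum_{j=1}^{r-1}\alpha_j z_j}{\alpha_r}.
\]
The crucial observation is that this algebraic pole is \emph{not} in the strip. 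Indeed, with $\Re z_j=\rho_j$ for $j<r$, inequality \eqref{eq:ineq_delta_epsilon} of Lemma~\ref{lemma:ineq_l_tilde_l_delta_epsilon} applied at $x=\rho_r$, together with $k_{r+1},\ldots,k_d\geq 0$, gives $\Re z_r>\rho_r$, so the contour shift only sweeps across the gamma poles.

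Next I would apply Cauchy's residue theorem. The residue of $\Gamma(z_r)$ at $z_r=-k_r$ equals $(-1)^{k_r}/k_r!$, and substituting $z_r=-k_r$ into the rest of the integrand replaces the tuple $(\mathbf{z},-\mathbf{k})$ with $((z_1,\ldots,z_{r-1}),-k_r,-\mathbf{k})$; reading off the resulting $(r-1)$-fold integral identifies it as exactly $I^{(r-1)}_{(k_r,k_{r+1},\ldots,k_d)}(\mathbf{s})$. Summing over $0\leq k_r\leq M_r$ yields the finite sum on the right-hand side, while the integral along the shifted contour is precisely the stated remainder term. The standard justification for closing the contour---vanishing of horizontal connecting segments as $|\Im z_r|\to\infty$---uses Stirling's estimate to furnish exponential decay for $\Gamma(z_r)$ and $\Gamma(s_T+|\mathbf{k}|-|\mathbf{z}|)$, balanced against the polynomial behaviour of the denominator and the moderate growth of $F$ supplied by Proposition~\ref{thm_dC_compl}.

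For holomorphicity of the remainder on $\mathcal{H}_T(\varepsilon-M_r-\eta_r,(1-\varepsilon+M_r+\eta_r)/a_0)$, I would check that on the new contour ($\Re z_j=\rho_j$ for $j<r$, $\Re z_r=-M_r-\eta_r$) no singularity of the integrand in $\mathbf{s}$ arises. Since each $\alpha_j\geq 1$, \eqref{eq:ineq_alpha_rho_epsilon} gives $\sum_{j=1}^{r-1}\rho_j<\varepsilon$; combined with $\sigma_T>\varepsilon-M_r-\eta_r$ this forces $\Re(s_T+|\mathbf{k}|-|\mathbf{z}|)>0$ on the contour, so $\Gamma(s_T+|\mathbf{k}|-|\mathbf{z}|)$ is holomorphic there. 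Similarly, $a_0\sigma_T<1-\varepsilon+M_r+\eta_r$ together with $\alpha_r\geq 1$ makes the real part of $a_0 s_T+\langle\boldsymbol{\alpha}|(\mathbf{z},-\mathbf{k})\rangle-1$ strictly negative on the contour, so the denominator does not vanish. Uniform absolute convergence on compact subsets of the domain then follows from the same exponential decay of gamma factors and the moderate growth of $F$, and Morera's theorem yields holomorphicity. The main obstacle is precisely this balancing act: Assumption~\ref{assumption:delta_epsilon_rho} (via Lemma~\ref{lemma:ineq_l_tilde_l_delta_epsilon}) must simultaneously push the algebraic pole to the right of $(\rho_r)$---so that no extraneous residue appears---and keep the same denominator non-vanishing far to the left on $(-M_r-\eta_r)$, and verifying these two opposite constraints is the one non-routine step.
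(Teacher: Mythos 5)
Your proposal is correct and follows essentially the same route as the paper: shift the $z_r$-contour from $(\rho_r)$ to $(-M_r-\eta_r)$, use \eqref{eq:ineq_delta_epsilon} to see that the zero of the affine denominator lies to the right of $(\rho_r)$ so only the poles of $\Gamma(z_r)$ are crossed, identify the residues with $I^{(r-1)}_{(k_r,\ldots,k_d)}$, and verify non-vanishing of the denominator and positivity of $\Re(s_T+|\mathbf{k}|-|\mathbf{z}|)$ on the shifted contour for the holomorphy claim. You in fact supply more detail than the paper, which delegates most of this to the analogous argument in Proposition \ref{prop:formula_zeta_n_shift_left} and the word ``clearly.''
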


\begin{proof}
The proof of this recursive formula follows the same thread as the proof of Proposition \ref{prop:formula_zeta_n_shift_left}. We can perform a similar shift towards the left, but this time we pass through fewer singularities than in the previous case. Indeed, by \eqref{eq:ineq_delta_epsilon}, we obtain that the only singularities we encounter in the integral \eqref{eq:def_I_r} when shifting $z_r$ from $(\rho_r)$ to $(-M_r-\eta_r)$ come from $\Gamma(z_r)$. Via the residue theorem, we obtain
\begin{align*}
    I^{(r)}_{\mathbf{k}}(\mathbf{s}) =& \sum_{k_r=0}^{M_r}\frac{(-1)^{k_r}}{(2\pi \ii)^{r-1} k_r!} \int_{(\rho_1)} \cdots \int_{(\rho_{r-1})} \frac{\Gamma(s_T+k_r+|\mathbf{k}|-|\mathbf{z}|)}{\Gamma(s_T)} \\
    & \hspace{171pt} \times \frac{F(\mathbf{s};\mathbf{z},-k_r,-\mathbf{k})\prod_{j=1}^{r-1} \Gamma(z_j) \, \dd z_{j}}{a_0s_T+\langle \boldsymbol{\alpha}|\mathbf{z},-k_r,-\mathbf{k} \rangle-1} \\
    & + \frac{1}{(2\pi \ii)^r\Gamma(s_T)} \int_{(\rho_{1})} \cdots \int_{(\rho_{r-1})}\int_{(-M_r-\eta_r)} \frac{\Gamma(s_T+|\mathbf{k}|-|\mathbf{z}|)}{a_0s_T+\langle \boldsymbol{\alpha}|\mathbf{z},-\mathbf{k} \rangle -1} \\
    & \hspace{207pt} \times F(\mathbf{s};\mathbf{z},-\mathbf{k})\prod_{j=1}^r \Gamma(z_j) \, \dd z_j.
\end{align*}
Clearly, the last integral defines a holomorphic function on $\mathcal{H}_T(\varepsilon-M_r-\eta_r,\frac{1-\varepsilon+M_r+\eta_r}{a_0})$.
\end{proof}

\begin{prop}
\label{prop:iterative_structure_J_r}
Let $M_r \in \mathbb{N}_0$, $0<\eta_r<1$, and $\mathbf{s} \in \mathcal{H}_T(\delta,\delta+\frac{\varepsilon}{a_0})$. Then,
\begin{align*}
    J^{(r)}_{\boldsymbol{\ell}}(\mathbf{s})&=\sum_{\ell_r=0}^{M_r} \frac{(-1)^{\ell_r}}{\ell_r!} J^{(r-1)}_{(\ell_r,\ldots,\ell_d)}(\mathbf{s}) \\
    & + \frac{1}{(2\pi \ii)^r \Gamma(s_T)} \int_{(\rho_1)} \cdots \int_{(\rho_{r-1})}\int_{(-M_{r}-\eta_{r})} \Gamma(\widetilde{g}(s_T;\mathbf{z},-\boldsymbol{\ell}))\Gamma(g(s_T;\mathbf{z},-\boldsymbol{\ell})) \\
    & \hspace{155pt} \times F(\mathbf{s};\mathbf{z},-\boldsymbol{\ell},g(s_T;\mathbf{z},-\boldsymbol{\ell})) \prod_{j=1}^r \Gamma(z_j) \, \dd z_j.
\end{align*}
Moreover, the last integral defines a holomorphic function on the domain
$$ \mathcal{H}_T\left(\frac{1+\varepsilon-(a_d-a_r)(M_r+\eta_r)}{a_d},\frac{1-\varepsilon+\alpha_r(M_r+\eta_r)}{a_0}\right). $$
\end{prop}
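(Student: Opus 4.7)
The plan is to follow the pattern of the proof of Proposition \ref{prop:iterative_structure_I_r}: shift the contour in the last free variable $z_r$ from $(\rho_r)$ to $(-M_r-\eta_r)$, while the remaining variables $z_1,\ldots,z_{r-1}$ stay on $(\rho_1),\ldots,(\rho_{r-1})$. The key difference with Proposition \ref{prop:iterative_structure_I_r} is that here the integrand of $J^{(r)}_{\boldsymbol{\ell}}$ involves the two composite gamma factors $\Gamma(g(s_T;\mathbf{z},-\boldsymbol{\ell}))$ and $\Gamma(\widetilde{g}(s_T;\mathbf{z},-\boldsymbol{\ell}))$, so before invoking the residue theorem I would need to verify that these do not contribute any crossed poles.

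First I would identify the candidate poles in $z_r$ in the strip $-M_r-\eta_r<\Re z_r<\rho_r$ (with $\Re z_j=\rho_j$ for $j<r$). The three possible sources are $\Gamma(z_r)$, $\Gamma(g)$ and $\Gamma(\widetilde{g})$. Both $g$ and $\widetilde{g}$ are affine in $z_r$ with negative coefficient (because $a_0<a_r<a_d$), so their real parts increase as $\Re z_r$ decreases from $\rho_r$. Inequalities \eqref{eq:ineq_l_k} and \eqref{eq:ineq_tilde_l_k} of Lemma \ref{lemma:ineq_l_tilde_l_delta_epsilon} already give $\Re g>0$ and $\Re\widetilde{g}>0$ at the right edge $\Re z_r=\rho_r$, so these positivities persist throughout the strip. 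Hence the only poles crossed are the simple poles $z_r=-\ell_r$, $0\leq\ell_r\leq M_r$, of $\Gamma(z_r)$, with residues $(-1)^{\ell_r}/\ell_r!$.

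Next I would justify the shift itself: by Stirling's formula $\Gamma(z_r)$ has exponential decay on vertical lines, while $F$ is of moderate growth by Proposition \ref{thm_dC_compl}, and $\Gamma(g),\Gamma(\widetilde{g})$ together with the compensating $1/\Gamma(s_T)$ stay polynomially controlled; so the horizontal connecting segments at $|\Im z_r|=T$ vanish as $T\to\infty$. The residue theorem then yields the claimed decomposition, since the residue at $z_r=-\ell_r$ reproduces exactly the integrand defining $J^{(r-1)}_{(\ell_r,\boldsymbol{\ell})}(\mathbf{s})$ (with $\ell_r$ prepended to $\boldsymbol{\ell}$, producing a tuple in $\mathbb{N}_0^{d-r}$), scaled by $(-1)^{\ell_r}/\ell_r!$.

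The step I expect to be the main obstacle is the holomorphy of the remaining integral on the precise domain $\mathcal{H}_T\bigl(\tfrac{1+\varepsilon-(a_d-a_r)(M_r+\eta_r)}{a_d},\tfrac{1-\varepsilon+\alpha_r(M_r+\eta_r)}{a_0}\bigr)$. For this I would substitute $\Re z_r=-M_r-\eta_r$ and $\Re z_j=\rho_j$ ($j<r$) into the explicit expressions for $g$ and $\widetilde{g}$ and check that both real parts remain strictly positive for every $s_T$ in the target strip. The upper bound $a_0\sigma_T<1-\varepsilon+\alpha_r(M_r+\eta_r)$ combined with \eqref{eq:ineq_alpha_rho_epsilon} controls $\Re g$; the lower bound $a_d\sigma_T>1+\varepsilon-(a_d-a_r)(M_r+\eta_r)$ combined with \eqref{eq:ineq_alpha_rho_epsilon_bis} controls $\Re\widetilde{g}$. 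The delicate point is precisely this asymmetric matching: the upper endpoint of the strip is dictated by the factor $\Gamma(g)$ (whose argument involves $a_j-a_0$) while the lower endpoint is dictated by $\Gamma(\widetilde{g})$ (whose argument involves $a_d-a_j$), and one has to carry the algebra cleanly to see both inequalities emerge as sharp.
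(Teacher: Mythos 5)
Your proposal is correct and follows essentially the same route as the paper: shift $z_r$ from $(\rho_r)$ to $(-M_r-\eta_r)$, use the positivity of $\Re g$ and $\Re\widetilde{g}$ throughout the strip (which is exactly the content of Lemma \ref{lemma:ineq_l_tilde_l_delta_epsilon}, valid for all $x\le\rho_r$ by the same monotonicity observation you make) to see that only the poles of $\Gamma(z_r)$ are crossed, and then verify the stated domain of holomorphy by substituting $\Re z_r=-M_r-\eta_r$ into $g$ and $\widetilde{g}$ together with \eqref{eq:ineq_alpha_rho_epsilon} and \eqref{eq:ineq_alpha_rho_epsilon_bis}. The asymmetric bounds you flag as delicate come out exactly as you describe, so there is no gap.
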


\begin{proof}[Proof of Proposition \ref{prop:iterative_structure_J_r}]
Let $\mathbf{s} \in \mathcal{H}_T(\delta,\delta+\frac{\varepsilon}{a_0})$. By \eqref{eq:ineq_l_k} and \eqref{eq:ineq_tilde_l_k}, we know that there is no singularity coming from $\Gamma(\widetilde{g}(s_T;\mathbf{z},-\boldsymbol{\ell}))\Gamma(g(s_T;\mathbf{z},-\boldsymbol{\ell}))$ that we encounter when we shift the integration's path from $(\rho_r)$ to $(-M_r-\eta_r)$ with respect to $z_r$ in \eqref{eq:def_J_r}. We only encounter the singularities coming from $\Gamma(z_r)$. Therefore, by the residue theorem we obtain
\begin{align*}
    J^{(r)}_{\boldsymbol{\ell}}(\mathbf{s})=& \sum_{\ell_r=0}^{M_r} \frac{(-1)^{\ell_r}}{(2\pi \ii)^{r-1}\ell_r!} \int_{(\rho_1)} \cdots \int_{(\rho_{r-1})}  \frac{\Gamma(\widetilde{g}(s_T;\mathbf{z},-\ell_r,-\boldsymbol{\ell}))\Gamma(g(s_T;\mathbf{z},-\ell_r,-\boldsymbol{\ell}))}{\Gamma(s_T)} \\
    & \hspace{100pt} \times F(\mathbf{s};\mathbf{z},-\ell_{r},-\boldsymbol{\ell},g(s_T;\mathbf{z},-\ell_r,-\boldsymbol{\ell}))\prod_{j=1}^{r-1} \Gamma(z_j) \dd z_j \\
    & + \frac{1}{(2\pi \ii)^r\Gamma(s_T)} \int_{(\rho_1)} \cdots \int_{(\rho_{r-1})}\int_{(-M_{r}-\eta_{r})} \Gamma(\widetilde{g}(s_T;\mathbf{z},-\boldsymbol{\ell}))\Gamma(g(s_T;\mathbf{z},-\boldsymbol{\ell}))\\
    & \hspace{147pt} \times F(\mathbf{s};\mathbf{z},-\boldsymbol{\ell},g(s_T;\mathbf{z},-\boldsymbol{\ell})) \prod_{j=1}^r \Gamma(z_j) \, \dd z_j.
\end{align*}
The summand corresponds to $J^{(r-1)}_{(\ell_r,\boldsymbol{\ell})}(\mathbf{s})$. The last integral is holomorphic over $\mathcal{H}_T(\frac{1+\varepsilon-(\alpha_d-\alpha_r)(M_r+\eta_r)}{a_d},\frac{1-\varepsilon+\alpha_r(M_r+\eta_r)}{a_0})$ because on this set, both affine linear forms $\widetilde{g}(s_T;\mathbf{z},-\boldsymbol{\ell})$ and $g(s_T;\mathbf{z},-\boldsymbol{\ell})$ have a positive real part for all $\mathbf{z}=(z_1,\ldots,z_r) \in (\rho_1) \times \cdots \times (\rho_{r-1}) \times (-M_r-\eta_r)$.
\end{proof}

\subsection{Meromorphic continuation}
\label{subsection:meromorphic_continuation_zeta_n}

In order to obtain a meromorphic continuation of $\zeta_{n}(\mathbf{s};\mathbf{P};\boldsymbol{\mu}_{n-1})$, we will first apply Proposition \ref{prop:formula_zeta_n_shift_left}, and then we apply Proposition \ref{prop:iterative_structure_I_r} and Proposition \ref{prop:iterative_structure_J_r} recursively.

\begin{proof}[Proof of Theorem \ref{th:analytic_continuation_zeta_general_case}]
Note that by \eqref{eq:ineq_delta_a0_delta_a1}, we have that 
$$ \frac{1-\varepsilon+y(\mathbf{M},\boldsymbol{\eta})}{a_0}> \frac{1-\varepsilon}{a_0}>\delta.$$
Therefore the set 
$$ \mathcal{H}':=\mathcal{H}_{\varepsilon}(\mathbf{M},\boldsymbol{\eta}) \cap \mathcal{H}_T(\delta,\delta+\frac{\varepsilon}{a_0})$$
is a non-empty open connected subset of $\mathbb{C}^T$. For all $\mathbf{s} \in \mathcal{H}'$, we can first apply Proposition \ref{prop:formula_zeta_n_shift_left} in order to express $\zeta_{n}(\mathbf{s};\mathbf{P};\boldsymbol{\mu}_{n-1})$ as a finite sum of integrals of the form $I^{(d-1)}$ and $J^{(d-1)}$, plus a holomorphic function. Since $\mathbf{s} \in \mathcal{H}' \subset \mathcal{H}_T(\delta,\delta+\frac{\varepsilon}{a_0})$, we can iteratively apply Proposition \ref{prop:iterative_structure_I_r} and Proposition \ref{prop:iterative_structure_J_r} to simplify both integrals $I^{(d-1)}$ and $J^{(d-1)}$.

By the induction principle, we prove that for all $\mathbf{s} \in \mathcal{H}'$ and all $0 \leq r \leq d-1$, we have
\begin{align}\label{tuika}
    \zeta_{n}(\mathbf{s};\mathbf{P};\boldsymbol{\mu}_{n-1})=&\frac{H(\mathbf{s})}{\Gamma(s_T)}+ \sum_{k_{r+1}=0}^{M_{r+1}} \cdots \sum_{k_d=0}^{M_d} \frac{(-1)^{|\mathbf{k}|}}{k_{r+1}!\cdots k_d!} I^{(r)}_{\mathbf{k}}(\mathbf{s}) \notag\\
    &+ \frac{1}{\alpha_d}\sum_{\ell_{r+1}=0}^{M_{r+1}} \cdots \sum_{\ell_{d-1}=0}^{M_{d-1}} \frac{(-1)^{|\boldsymbol{\ell}|}}{\ell_{r+1}!\cdots \ell_{d-1}!} J^{(r)}_{\boldsymbol{\ell}}(\mathbf{s}),
\end{align}
where $H(\mathbf{s})$ is a holomorphic function on $\mathcal{H}_{\varepsilon}(\mathbf{M},\boldsymbol{\eta})$. The base case follows from Proposition \ref{prop:formula_zeta_n_shift_left}. The induction step follows from both Proposition \ref{prop:iterative_structure_I_r} and Proposition \ref{prop:iterative_structure_J_r}. Observe that all three propositions involve three functions defined via integrals, and they are all holomorphic on $\mathcal{H}_{\varepsilon}(\mathbf{M},\boldsymbol{\eta})$. 
The case $r=0$ of the above \eqref{tuika} gives
\eqref{eq:analytic_continuation_zeta_general_case} 
in Theorem \ref{th:analytic_continuation_zeta_general_case}.
 Except for $H(\mathbf{s})$, all functions appearing on the
 right-hand side of 
\eqref{eq:analytic_continuation_zeta_general_case} are
meromorphic 
on the whole complex space, with respect to $\mathbf{s}$.
\end{proof}

\begin{rem}
Working in the set $\mathbf{s} \in \mathcal{H}(\delta,\delta+\frac{\varepsilon}{a_0})$, with $\delta, \varepsilon$ as in Assumption \ref{assumption:delta_epsilon_rho} simplifies the expressions obtained via the residue theorem for $I^{(r)}_{\mathbf{k}}$ and $J^{(r)}_{\boldsymbol{\ell}}$. Indeed, by taking such $\mathbf{s}$, we avoid dealing with singularities and residues coming from the product $\Gamma(\widetilde{g}(s_T;\mathbf{z},-\boldsymbol{\ell}))\Gamma(g(s_T;\mathbf{z},-\boldsymbol{\ell}))$ in the integrand of $J^{(r)}_{\boldsymbol{\ell}}$ ($1 \leq r \leq d-1$), and we also avoid the singularities coming from $\frac{1}{a_0s_T+\sum_{j=1}^d \alpha_j z_j -1}$ in the integrand of $I^{(r)}_{\mathbf{k}}$ ($1 \leq r \leq d-1$).
\end{rem}

\subsection{Special values at non-positive integers}
\label{subsection:values_zeta_n_general_case}

\begin{proof}[Proof of Theorem \ref{th:value_zeta_n}]
We apply Theorem \ref{th:analytic_continuation_zeta_general_case} with a tuple $\mathbf{M}=(M_1,\ldots,M_d)\in \mathbb{N}_0^d$ sufficiently large and $\boldsymbol{\eta}=(\eta_1,\ldots,\eta_d) \in (0,1)^d$ such that $-\mathbf{N} \in \mathcal{H}_{\varepsilon}(\mathbf{M},\boldsymbol{\eta})$. We then obtain
\begin{align*}
    \zeta_{n}(\mathbf{s};\mathbf{P};\boldsymbol{\mu}_{n-1})=& -\sum_{k_1=0}^{M_1} \cdots \sum_{k_d=0}^{M_d}  \frac{(-1)^{|\mathbf{k}|}F(-\mathbf{N};-\mathbf{k})}{\mathbf{k}!(a_0N_T+\langle \boldsymbol{\alpha}|\mathbf{k}\rangle+1)}\lim_{s_T \to -N_T}\frac{\Gamma(s_T+|\mathbf{k}|)}{\Gamma(s_T)} \\ 
    & +\sum_{\ell_1=0}^{M_1} \cdots \sum_{\ell_{d-1}=0}^{M_{d-1}} \frac{(-1)^{|\boldsymbol{\ell}|}F(-\mathbf{N};-\boldsymbol{\ell},g(-N_T;-\boldsymbol{\ell}))}{\alpha_d\boldsymbol{\ell}!} \\
    & \hspace{119pt} \times \lim_{s_T \to -N_T}\frac{\Gamma(\widetilde{g}(s_T;-\boldsymbol{\ell}))\Gamma(g(s_T;-\boldsymbol{\ell}))}{\Gamma(s_T)}.
\end{align*} 
It remains to compute the two limits in the formula above.

Let $\mathbf{k}=(k_1,\ldots,k_d) \in \mathbb{N}_0^d$, by the functional relation for $\Gamma(s)$ we get
\[
    \lim_{s_T \to -N_T} \frac{\Gamma(s_T+|\mathbf{k}|)}{\Gamma(s_T)} =
    \begin{cases}
        \frac{(-1)^{|\mathbf{k}|}N_T!}{(N_T-|\mathbf{k}|)!} & \text{ if } |\mathbf{k}| \leq N_T, \\
        0 &\text{ otherwise.}
    \end{cases}
\]

Let $\boldsymbol{\ell}=(\ell_1,\ldots,\ell_{d-1}) \in \mathbb{N}_0^{d-1}$. For the evaluation of the remaining function, we first notice that $\widetilde{g}(-N_T;-\boldsymbol{\ell}) \geq -\frac{a_dN_T+1}{\alpha_d}$. For all $i \in [\![ 0,\lfloor \frac{a_dN_T+1}{\alpha_d} \rfloor ]\!]$, we see that
$\widetilde{g}(-N_T;-\boldsymbol{\ell})=-i$ is equivalent to
\begin{align*}
\sum_{j=1}^{d} (\alpha_d-\alpha_j) \ell_j=1+a_dN_T-i\alpha_d,
\end{align*}
which is further equivalent to
\begin{align*}
    g(-N_T;-\boldsymbol{\ell})=|\boldsymbol{\ell}|+i-N_T.
\end{align*}
It is clear by definition of $g$ that $g(-N_T;-\boldsymbol{\ell}) \in \frac{1}{\alpha_d}\mathbb{N}$, so it is a positive number. Finally, again by the functional relation for $\Gamma(s)$ we get
\[
    \lim_{s_T \to -N_T} \frac{\Gamma(\widetilde{g}(s_T;-\boldsymbol{\ell}))}{\Gamma(s_T)} =
    \begin{cases}
        \frac{(-1)^{N_T}\alpha_dN_T!}{(-1)^ia_di!} & \text{if there exists } 0 \leq i \leq \left\lfloor \frac{a_dN_T+1}{\alpha_d}\right\rfloor \text{ s.t.} \\
        & \sum_{j=1}^{d-1} (a_d-a_j) \ell_j=1+a_dN_T-i\alpha_d, \\
        0 &\text{otherwise.}
    \end{cases}
\]
Therefore, we obtain
\begin{align*}
    \zeta_{n}(&-\mathbf{N};\mathbf{P};\boldsymbol{\mu}_{n-1})=-\sum_{\substack{k_1,\ldots,k_d \geq 0 \\ |\mathbf{k}| \leq N_T}} \binom{N_T}{(k_1,\ldots,k_d,N_T-|\mathbf{k}|)} \frac{F(-\mathbf{N};-\mathbf{k})}{a_0N_T+\langle \boldsymbol{\alpha}|\mathbf{k}\rangle+1} \\
    &+ \sum_{i=0}^{\left\lfloor \frac{a_dN_T+1}{\alpha_d}\right\rfloor} \sum_{\substack{\ell_1,\ldots,\ell_{d-1} \geq 0 \\ \sum_{j=1}^{d-1} (\alpha_d-\alpha_j) \ell_j=1+a_dN_T-i\alpha_d}} \frac{(-1)^{|\boldsymbol{\ell}|+i+N_T}\alpha_dN_T!}{\alpha_da_di!\boldsymbol{\ell}!}(|\boldsymbol{\ell}|+i-N_T-1)! \\
    & \hspace{231pt} \times F(-\mathbf{N};-\boldsymbol{\ell},|\boldsymbol{\ell}|+i-N_T).
\end{align*}
It remains to write explicitly the special values of $F$ appearing in the previous formula. For $\mathbf{k}=(k_1,\ldots,k_d) \in \mathbb{N}_0^d$ such that $|\mathbf{k}|\leq N_T$, by using the relation $\zeta(-m)=-\frac{\widetilde{B}_{m+1}}{m+1}$ we obtain
\begin{align*}
    F(-\mathbf{N};-\mathbf{k})=&\widetilde{B}_{a_0N_T+\langle \boldsymbol{\alpha}|\mathbf{k}\rangle+1} \\
    & \hspace{38pt} \times \zeta_{n-1}^{\dc}(-N_1,\ldots,-N_{T-1},-N_T+|\mathbf{k}|,-\mathbf{k};\mathbf{P(Q)};\boldsymbol{\mu}_{n-1}).
\end{align*}
For $\boldsymbol{\ell}=(\ell_1,\ldots,\ell_{d-1}) \in \mathbb{N}_0^{d-1}$ such that $\alpha_d|\boldsymbol{\ell}|-\langle \boldsymbol{\alpha},\boldsymbol{\ell}\rangle=1+a_dN_T-i\alpha_d$, we have
\begin{align*}
    F(-\mathbf{N};-\boldsymbol{\ell},|\boldsymbol{\ell}&|+i-N_T) \\
    &= \zeta_{n-1}^{\dc}\left((-N_1,\ldots,-N_{T-1},-i,-\boldsymbol{\ell},|\boldsymbol{\ell}|+i-N_T);\mathbf{P(Q)};\boldsymbol{\mu}_{n-1}\right).
\end{align*}
\end{proof}

\section{Examples}
\label{application}

In Theorem \ref{th:value_zeta_n}, one can observe that the formula obtained for the special value $\zeta_{n}(-\mathbf{N};\mathbf{P};\boldsymbol{\mu}_{n-1})$ involves values of fully twisted multiple zeta-functions at integer points $\mathbf{s} \in \mathbb{Z}_{\leq 0}^{T+d-1} \times \mathbb{N}_0$, and these values are rather mysterious. In the following three examples, we consider specific polynomials $P_1,\ldots,P_T$ such that this positive value phenomenon can be taken care of. In the first example, we assume that $P_T$ is a polynomial with a constant term in front of the highest degree monomial of $X_n$. 
In the second example, we study the simple case where $n=T=1$.
Lastly, in the third example, we look at a specific case where the number of variables in the Dirichlet series is $n=2$. In this case, we find that some special values $\zeta_{2}(-\mathbf{N};\mathbf{P};\boldsymbol{\mu}_1)$ are transcendental numbers. 

\subsection{\texorpdfstring{1\textsuperscript{st}}{First} Example: \texorpdfstring{$P_T$}{P} with a constant leading coefficient}

Let $\mathbb{K}$ be a subfield of $\mathbb{R}$. Let $\boldsymbol{\mu}_{n-1}=(\mu_1,\ldots,\mu_{n-1}) \in (\mathbb{T} \setminus \{ 1 \})^{n-1}$. Let $\mathbf{P}=(P_1,\ldots,P_T)$ be polynomials with coefficients in $\mathbb{K}$ such that $P_j \in \mathbb{K}[X_1,\ldots,X_{n-1}]$ $(1 \leq j \leq T-1)$, and are polynomials satisfying HDF. Assume that $P_T$ has a constant leading coefficient in front of the monomial $X_n^{a_d}$, i.e.
$$ P_T(X_1,\ldots,X_n) = \sum_{j=0}^{d-1} Q_j(X_1,\ldots,X_{n-1}) X_n^{a_j}+cX_n^{a_d}, $$
with $Q_0,\ldots,Q_{d-1}$ nonzero polynomials satisfying HDF and verifying \eqref{eq:condition_polynomials_tends_infinity}, and $c \in \mathbb{K} \setminus \{0\}$. 
It is easy to see by analytic continuation that in this case 
\begin{align*}
    \zeta_{n-1}^{\dc}(&s_1,\ldots,s_{T},z_1,\ldots, z_d;\mathbf{P(Q)};\boldsymbol{\mu}_{n-1})\\
    &= c^{-z_d} \zeta_{n-1}^{\dc}(s_1,\ldots,s_{T},z_1,\ldots, z_{d-1};(P_1,\ldots,P_{T-1}, Q_0,\ldots, Q_{d-1});\boldsymbol{\mu}_{n-1}).
\end{align*}
So, by applying Theorem \ref{th:value_zeta_n} to $\zeta_n(\mathbf{s};\mathbf{P};\mu)$ and using de Crisenoy's result (namely, Proposition \ref{thm_dC}), it follows that
$$ \zeta_{n}(-\mathbf{N};\mathbf{P};\boldsymbol{\mu}_{n-1}) \in \mathbb{K}(\boldsymbol{\mu}_{n-1}). $$
We can actually make this result fully explicit in terms of Bernoulli numbers and Stirling numbers of the second kind, but the formula is lengthy.

\subsection{\texorpdfstring{2\textsuperscript{nd}}{Second} Example: \texorpdfstring{$n=T=1$}{n and T equal 1}}

We now study the case $n=T=1$. We set 
$$ P(X)=c_0X^{a_0}+\cdots +c_dX^{a_d},$$ 
where $a_0<\cdots <a_d$, and $c_0,\ldots,c_d > 0$, and we assume $d \neq 0$. Then, we can apply Theorem \ref{th:value_zeta_n} to get
\begin{align*}
    &\zeta_1(-N;P;1)=-c_0^{N}\sum_{\substack{k_1,\ldots,k_d \geq 0 \\ |\mathbf{k}| \leq N}} \binom{N}{(\mathbf{k},N-|\mathbf{k}|)} \frac{\widetilde{B}_{a_0N+\langle \boldsymbol{\alpha},\mathbf{k}\rangle+1}}{a_0N+\langle \boldsymbol{\alpha},\mathbf{k} \rangle+1}\prod_{j=1}^d \left(\frac{c_j}{c_0} \right)^{k_j} \\
    &+\sum_{i=0}^{\left\lfloor \frac{a_dN+1}{\alpha_d}\right\rfloor} \sum_{\substack{\ell_1,\ldots,\ell_{d-1} \geq 0 \\ \sum_{j=1}^{d-1} (a_d-a_j) \ell_j \\ \qquad =1+a_dN-i\alpha_d}} \frac{(-1)^{|\boldsymbol{\ell}|+i+N}(|\boldsymbol{\ell}|+i-N-1)!N!}{a_di!\boldsymbol{\ell}!} c_0^{i} c_d^{N-i} \prod_{j=1}^{d-1} \left(\frac{c_j}{c_d} \right)^{\ell_j}.
\end{align*}
It should be mentioned that a similar result was proved in Salinas' thesis \cite{Salinas}, via different techniques.


\subsection{\texorpdfstring{3\textsuperscript{rd}}{Third} Example: \texorpdfstring{$n=T=2$}{n and T equal 2}}

Let $n=2$, $\mu \in \mathbb{T} \setminus \{ 1 \}$. We consider the polynomials 
$$ P_1(X_1):=X_1^r, \qquad P_2(X_1,X_2):=\sum_{j=0}^d c_{j} X_1^{b_j} X_2^{a_j} $$
with $a_0<\cdots <a_d$. With those polynomials, for all $s \in \mathbb{C}$, $\mathbf{z}\in \mathbb{C}^{d+1}$ we have
$$ \zeta_1^{\dc}(s,\mathbf{z};P_1,c_0X_1^{b_0},\ldots,c_dX_1^{b_d};\mu)= \left( \prod_{j=0}^d c_j^{-z_j} \right) \zeta_{\mu}(rs+\sum_{j=0}^d b_j z_j). $$
Therefore
by Theorem \ref{th:value_zeta_n} we obtain for $-\mathbf{N}=(-N_1,-N_2) \in \mathbb{Z}_{\leq 0}^2$,
\begin{align}
    \zeta_{2}(-&\mathbf{N};\mathbf{P};\mu)=-\sum_{\substack{k_1,\ldots,k_d \geq 0 \\ |\mathbf{k}| \leq N_2}} \binom{N_2}{(k_1,\ldots,k_d,N_2-|\mathbf{k}|)} \frac{\widetilde{B}_{a_0N_2+\langle \boldsymbol{\alpha}|\mathbf{k}\rangle+1}}{a_0N_2+\langle \boldsymbol{\alpha}|\mathbf{k} \rangle+1} \label{eq:values_special_case_transcendental} \\
    & \hspace{52pt} \times c_0^{N_2} \left( \prod_{j=1}^d \frac{c_j^{k_j}}{c_0^{k_j}} \right) \zeta_{\mu}\left(-rN_1+b_0(|\mathbf{k}|-N_2)-b_1k_1-\cdots-b_d k_d\right) \nonumber \\
    &\hspace{3pt}+ \frac{1}{a_d}\sum_{i=0}^{\left\lfloor \frac{a_dN_2+1}{a_d-a_0}\right\rfloor} \sum_{\substack{\ell_1,\ldots,\ell_{d-1} \geq 0 \\ \sum_{j=1}^{d-1} (a_d-a_j) \ell_j=1+a_dN_2-i\alpha_d}} \frac{(-1)^{|\boldsymbol{\ell}|+i+N_2}N_2!(|\boldsymbol{\ell}|+i-N_2-1)!}{\boldsymbol{\ell}!} \nonumber \\
    & \hspace{68pt} \times \frac{c_0^i \prod_{j=1}^{d-1} c_j^{\ell_j}}{c_d^{|\boldsymbol{\ell}|+i-N_2}} \zeta_{\mu}(-rN_1-b_0i-\sum_{j=1}^{d-1}b_j\ell_j+b_d(|\boldsymbol{\ell}|+i-N_2)). \nonumber
\end{align}

\begin{example}\label{transcendental}
Let us consider the special case
$$P_1 (X_1)=1 \quad \text{and} \quad P_2(X_1, X_2)=1+ X_2+ X_1^q X_2^2. $$
Let $a_1=1, a_2=2, \alpha_1=1, \alpha_2=2$. By applying \eqref{eq:values_special_case_transcendental}, and by performing a change of variable, we obtain
\begin{align*}
    \zeta_{2}(-\mathbf{N};\mathbf{P};\mu)=&-\sum_{\substack{k_1,k_2 \geq 0 \\ k_1+k_2 \leq N_2}} \binom{N_2}{(k_1,k_2,N_2-k_1-k_2)} \frac{\widetilde{B}_{\langle \boldsymbol{\alpha}|\mathbf{k}\rangle+1}}{\langle \boldsymbol{\alpha}|\mathbf{k} \rangle+1} \zeta_{\mu}(-qk_2)\\
    &+ \sum_{i=0}^{N_2} \frac{(-1)^{1+i+N_2}N_2!(N_2-i)!}{2(2N_2-2i+1)!i!} \zeta_{\mu}(q(N_2-i+1), \\
    =&-\sum_{\substack{k_1,k_2 \geq 0 \\ k_1+k_2 \leq N_2}} \binom{N_2}{(k_1,k_2,N_2-k_1-k_2)} \frac{\widetilde{B}_{\langle \boldsymbol{\alpha}|\mathbf{k}\rangle+1}}{\langle \boldsymbol{\alpha}|\mathbf{k} \rangle+1} \zeta_{\mu}(-qk_2)\\
    &- \sum_{i=0}^{N_2} \frac{(-1)^{i}N_2!i!}{2(2i+1)!(N_2-i)!} \zeta_{\mu}(q(i+1)),
\end{align*}
These last values can be transcendental, depending on the torsion $\mu$, and the parity of $q$. We now assume that $q$ is a positive even integer, and $\mu=-1$. We know that $\zeta_{-1}(s)=(2^{1-s}-1)\zeta(s)$. Observe that the first sum is a rational number. Moreover, at positive even integers, the zeta values correspond to rational multiples of positive powers of $\pi$. Therefore, $\zeta_{2}(-\mathbf{N};\mathbf{P};-1)$ corresponds to a non-constant rational polynomial in $\pi$. It follows that $\zeta_{2}(-\mathbf{N};\mathbf{P};-1)$ is a transcendental number.
\end{example}


\par
D. Essouabri: Université Jean Monnet, Institut Camille Jordan (CNRS, UMR 5208), 20 rue du Docteur Rémy Annino, Saint-Étienne 42000, France.\\
E-mail address: driss.essouabri@univ-st-etienne.fr
\par
K. Matsumoto: Graduate School of Mathematics, Nagoya University,
Chikusa-ku, Nagoya 464-8602, Japan; and Center for General Education, 
Aichi Institute of Technology, Yakusa-cho, Toyota 470-0392, Japan\\
E-mail address: kohjimat@math.nagoya-u.ac.jp
\par
S. Rutard: Graduate School of Mathematics, Nagoya University,
Chikusa-ku, Nagoya 464-8602, Japan\\
E-mail address: simon.rutard@math.nagoya-u.ac.jp

\begin{thebibliography}{999}

\bibitem{AET01}S. Akiyama, S. Egami and Y. Tanigawa,
Analytic continuation of multiple zeta-functions and their values at non-positive integers,
Acta Arith. {\bf 98} (2001), 107--116.
\bibitem{ChMa16}Y. Choie and K. Matsumoto,
Functional equations for double series of Euler type with
coefficients,
Adv. Math. {\bf 292} (2016), 529--557.
\bibitem{ChMa17}Y. Choie and K. Matsumoto,
Functional equations for double series of Euler-Hurwitz-Barnes type with coefficients, 
in RIMS K{\^o}ky{\^u}roku Bessatsu {\bf B68} (2017), 91--109.
\bibitem{dC}M. de Crisenoy,
Values at $T$-tuples of negative integers of twisted multivariable zeta series associated to polynomials of several variables, Compositio Math. {\bf 142} (2006), no. 6, 1373--1402.
\bibitem{dCE08}M. de Crisenoy and D. Essouabri,
Relations between values at {$T$}-tuples of negative integers of twisted multivariable zeta series associated to polynomials of several variables, J. Math. Soc. Japan {\bf 60} (2008), 1--16.
\bibitem{Esso97}D. Essouabri,
Singularit{\'e}s de s{\'e}ries de Dirichlet associ{\'e}es {\`a} des polyn{\^o}mes de plusieurs
variables et application en th{\'e}orie analytique des nombres, 
Ann. Inst. Fourier {\bf 47} (1997), 429--483.
\bibitem{Esso05}D. Essouabri,
Zeta functions associated to Pascal's triangle mod p, Japanese J. Math. {\bf 31} (2005), 157--174.
\bibitem{Esso12}D. Essouabri,
Height zeta functions on generalized projective toric varieties, in ``Recent Trends on Zeta functions in Algebra and Geometry", Contemp. Math., Amer. Math. Soc. and Real Soc. 
Mat. Espanol., 2012, 34pp.
\bibitem{CMUSP}D. Essouabri and K. Matsumoto,
Values at non-positive integers of partially twisted multiple zeta-functions I, Comment. Math. Univ. St. Pauli {\bf 67} (2019), no. 1, 83--100.
\bibitem{EM20}D. Essouabri and K. Matsumoto,
Values at non-positive integers of generalized Euler-Zagier multiple zeta-functions,
Acta Arith. {\bf 193} (2020), 109--131. 
\bibitem{EM21}D. Essouabri and K. Matsumoto,
Values of multiple zeta functions with polynomial denominators at non-positive integers,
Intern. J. Math. {\bf 32} (2021), 2150038, 41pp. 
\bibitem{Komo10}Y. Komori,
An integral representation of multiple Hurwitz-Lerch zeta functions and generalized multiple
Bernoulli numbers, Quart. J. Math. (Oxford) {\bf 61} (2010), 437--496. 
\bibitem{KMT11}Y. Komori, K. Matsumoto and H. Tsumura,
Functional equations for double $L$-functions and values at
non-positive integers,
Intern. J. Number Theory {\bf 7} (2011), 1441--1461.
\bibitem{Mats04}K. Matsumoto,
Functional equations for double zeta-functions,
Math. Proc. Cambridge Phil. Soc. {\bf 136} (2004), 1--7.
\bibitem{MOW20}K. Matsumoto, T. Onozuka and I. Wakabayashi,
Laurent series expansions of multiple zeta-functions of Euler-Zagier type at integer points,
Math. Z. {\bf 295} (2020), 623--642.
\bibitem{Mell}H. Mellin,
Eine Formel f{\"u}r den Logarithmus transcendenter Funktionen von endlichem Geschlecht,
Acta Soc. Sci. Fenn. {\bf 29}, no. 4 (1900), 3--49.
\bibitem{MuOn}H. Murahara and T. Onozuka,
Asymptotic behavior of the Hurwitz-Lerch multiple zeta functions
at non-positive integer points,
Acta Arith. {\bf 205} (2022), 191--210.
\bibitem{Onoz13}T. Onozuka,
Analytic continuation of multiple zeta-functions and the asymptotic behavior at non-positive
integers, Funct. Approx. Comment. Math. {\bf 49} (2013), 331--348.
\bibitem{Rutard}S. Rutard,
Values and derivative values at nonpositive integers of generalized multiple Hurwitz zeta functions, to appear in J. Math. Soc. Japan, arXiv:2312.04725.
\bibitem{Saha22}B. Saha,
Multiple Stieltjes constants and Laurent type expansion of the multiple zeta functions at
integer points, Selecta Math. {\bf 28} (2022), paper no. 6, 41pp.
\bibitem{Salinas}C. Salinas Zavala, Multivariable zeta functions, multivariable tauberian theorems and applications, PhD thesis (2021), Universit{\'e} de Lyon; Universit{\'e} nationale d’ing{\'e}nierie (Lima).
\bibitem{Sasa09}Y. Sasaki,
Multiple zeta values for coordinatewise limits at non-positive
integers,
Acta Arith. {\bf 136} (2009), 299--317.
\bibitem{WW}E. T. Whittaker and G. N. Watson,
A Course of Modern Analysis, 4th ed.,
Cambridge Univ. Press, Cambridge, 1927.
\end{thebibliography}
\end{document}